\newtheorem{theorem}{Theorem}%
\newtheorem{lemma}[theorem]{Lemma}%
\newtheorem{definition}[theorem]{Definition}%
\newcommand{\ecv}{M_{\mu}}
\newcommand{\intk}[1]{I_{#1}}
\newcommand{\dumvar}{u}
\newcommand{\dumvartwo}{v}
\newcommand{\typevar}{\tau}
\newcommand{\lowesteps}{\varepsilon'}
\newcommand{\mediumeps}{\tilde \varepsilon}
\newcommand{\mediumepsgam}{\tilde \varepsilon_{\gamma}}
\newcommand{\diff}{\mathop{}\!d}
\newcommand{\semax}{\{\mu_n\}_{n\in\Ns}}
\newcommand{\ellip}{\mathcal{E}_{\mu}}
\newcommand{\vol}{\text{vol}}
\newcommand{\gsm}{\mathcal{G}(\ellip, \sigma)}
\newcommand{\gsmk}{\mathcal{G}(\mathcal{K}, \sigma)}
\newcommand{\rcb}{\text{(RC)}_b}
\newcommand{\N}{\mathbb{N}}
\newcommand{\Ns}{\mathbb{N}^*}
\newcommand{\R}{\mathbb{R}}
\newcommand{\Rp}{\mathbb{R}_+^*}
\let\originalleft\left
\let\originalright\right
\renewcommand{\left}{\mathopen{}\mathclose\bgroup\originalleft}
\renewcommand{\right}{\aftergroup\egroup\originalright}
\newcommand{\quadtext}[1]{\quad \text{#1}\quad}
\newcommand{\newinf}{\mathop{\mathrm{inf}\vphantom{\mathrm{sup}}}}
\colorlet{eqframe}{black!35}   
\title{Metric Entropy and Minimax Risk of Ellipsoids with an Application to Pinsker's Theorem}
\date{}
\author{Thomas Allard \\ tallard@ethz.ch}
\begin{document}

\maketitle

\abstract{\noindent
  We study how large an $\ell^2$ ellipsoid is by introducing type-$\tau$ integrals that capture the average decay of its semi-axes.
  These integrals turn out to be closely related to standard complexity measures:
  we show that the metric entropy of the ellipsoid is asymptotically equivalent to the type-$1$ integral, and that the minimax risk in non-parametric estimation is asymptotically determined by the type-$2$ and type-$3$ integrals.
  This allows us to retrieve and sharpen classical results about metric entropy and minimax risk of ellipsoids through a systematic analysis of the type-$\typevar$ integrals, and yields an explicit formula linking the two.
  As an application, we improve on the best-known characterization of the metric entropy of the Sobolev ellipsoid, and extend Pinsker's Sobolev theorem in two ways: (i) to any bounded open domain in arbitrary finite dimension, and (ii) by providing the second-order term in the asymptotic expansion of the minimax risk.
}



\section{Introduction}

Quantifying compactness in metric spaces is a central problem in mathematics.
Compactness is a qualitative property---a set is either compact or it is not---but in many situations it is desirable to introduce a quantitative measure of how compact it is.
Metric entropy is a standard choice:
it appears across fields where a quantitative analysis involving compact sets is required, including approximation theory \cite{lorentzApproximationFunctions1966,lorentzMetricEntropyApproximation1966,lorentzConstructiveApproximationAdvanced1996,cohenTreeApproximationOptimal2001, cohen2022optimal}, 
empirical processes \cite{dudleyUniversalDonskerClasses1987,vandervaartWeakConvergenceEmpirical1996,dudleyUniformCentralLimit1999,luschgyFunctionalQuantizationGaussian2002},
harmonic analysis \cite{donohoUnconditionalBasesAre1993,donohoUnconditionalBasesBitLevel1996,donohoDataCompressionHarmonic1998,Grohs2015},
high-dimensional probability and statistics \cite{birge1983approximation,Vershynin_2018,wainwrightHighDimensionalStatistics2019},
information theory \cite{prosserEEntropyECapacityCertain1966,haussler1997mutual,yang1999information},
linear operator theory \cite{carlInequalitiesEigenvaluesEntropy1980,carl1981entropy,konigEigenvalueDistributionCompact1986,carlEntropyCompactnessApproximation1990,psidopaper,ALLARD2025101762},
and, more recently, neural network theory \cite{schmidt2020ReLU,elbrachterDeepNeuralNetwork2021,hutterMetricEntropyLimits2022,pan2025metricentropylimitsapproximationnonlinear}.
Other quantitative measures of compactness also exist.
For instance, in Section~\ref{sec:minimax-risk} we argue that the minimax risk in non-parametric estimation over a compact domain can be interpreted as a measure of its compactness.
The aim of the paper is to study such measures for a particular type of compact sets: ellipsoids in separable Hilbert spaces.

Ellipsoids are prototypical compact sets in separable Hilbert spaces.
Examples include unit balls in analytic, Sobolev, and Besov spaces \cite{firstpaper,secondpaper,triebelTheoryFunctionSpaces1983,triebelTheoryFunctionSpaces1992,triebelTheoryFunctionSpaces2006,triebelTheoryFunctionSpaces2020},
as well as images of unit balls under compact linear operators \cite{ALLARD2025101762,psidopaper,prosserEEntropyECapacityCertain1966,brezisFunctionalAnalysisSobolev2011}.
Since every infinite-dimensional separable Hilbert space is isometrically isomorphic to $\ell^2(\Ns)$ 
(see, e.g., \cite[Remark~5.10]{brezisFunctionalAnalysisSobolev2011}), 
we lose no generality by working with ellipsoids in sequence space.
Formally, for a sequence of non-negative real numbers $\mu=\semax$,
the ellipsoid determined by $\mu$ is 
\begin{equation}\label{eq:def-ellipsoid}
  \ellip 
  \coloneqq \Bigg \{\{x_n\}_{n\in\Ns}\in\ell^2(\Ns)
  \, \Bigm| \, 
  x_n=0, \text{ if } \mu_n=0, \text{ and } \sum_{n \colon \mu_n>0} \left\lvert {x_n}/{\mu_n}\right\rvert^2 \leq 1 \Bigg \}.
\end{equation} 
The $\semax$ are referred to as the semi-axes of $\ellip$; 
the value $\mu_n$ quantifies its spread in the $n$-th dimension.
An ellipsoid with only finitely many non-zero semi-axes is called finite-dimensional, and, to avoid working with trivial cases, 
we assume throughout that at least one semi-axis is non-zero.

Ellipsoids have a rich structure, 
which we can exploit to quantify their compactness.
To this end, 
we introduce two functions.
First,
the  semi-axis-counting function $\ecv$ of the ellipsoid $\ellip$ 
is defined by
\begin{equation}\label{eq:definition-ecf}
  \ecv (\varepsilon)
  \coloneqq \left\lvert \left\{ n \in\Ns \mid \mu_n \geq \varepsilon \right\}\right\rvert,
  \quad\text{for all } \varepsilon>0,
\end{equation}
where $|S|$ denotes the cardinality of a set $S$.
In other words, $\ecv(\varepsilon)$ counts the number of semi-axes greater than or equal to $\varepsilon$.
Second, given a real number $\typevar\geq 1$,
we introduce the type-$\typevar$ integral $\intk{\typevar}$ associated with $\ellip$ according to
\begin{equation}\label{eq:main-definition-integrals-order-k}
  \intk{\typevar}(\varepsilon)
  \coloneqq
  \int_{\varepsilon}^\infty \frac{\ecv (\dumvar)}{\dumvar^\typevar} \diff \dumvar,
  \quad \text{for all } \varepsilon >0.
\end{equation}
This integral can be interpreted as a weighted average of the semi-axis-counting function above the threshold $\varepsilon$, 
and therefore captures how the semi-axes decay on average.

The compactness of the ellipsoid $\ellip$ in $\ell^2(\Ns)$ is equivalent to any of the following three conditions: 
(i) $\mu_n\to 0$ as $n\to\infty$,
(ii) $\ecv(\varepsilon)$ is finite for all $\varepsilon>0$,
and (iii) $\intk{\typevar}(\varepsilon)$ is finite for all $\typevar \geq 1$ and $\varepsilon >0$. 
The equivalence of (i) and compactness is well-known (see, e.g., \cite[Lemma~6.2 and Remark~6.7]{brezisFunctionalAnalysisSobolev2011}); 
that (ii) and (iii) are equivalent to (i) follows from the definitions \eqref{eq:definition-ecf}--\eqref{eq:main-definition-integrals-order-k}.
Thus, we have two ways to quantify the compactness of $\ellip$ via its semi-axes: 
either pointwise, 
via the rate of convergence of $\mu_n\to 0$ as $n\to\infty$ 
(equivalently, the behavior of $\ecv(\varepsilon)$ as $\varepsilon\to 0$), 
or on average, via the asymptotics of $\intk{\typevar}(\varepsilon)$ as $\varepsilon \to 0$, for fixed $\typevar\geq 1$.
Are these quantitative measures equivalent? 
If not, which should be preferred: the pointwise or the averaged?

Our claim is that compactness is more naturally measured by the asymptotics of the type-$\typevar$ integrals  $\intk{\typevar}$.
We justify this by relating the type-$\typevar$ integrals 
to classical measures of compactness, 
namely metric entropy and minimax risk.
Specifically, Section~\ref{sec:metric-entropy} shows that the metric entropy of an ellipsoid is asymptotically equivalent to its type-$1$ integral $\intk{1}$, and
Section~\ref{sec:minimax-risk} shows
that the asymptotics of the minimax risk are determined by the type-$2$ and type-$3$ integrals $\intk{2}$ and $\intk{3}$.
Section~\ref{sec:metric-entropy-vs-minimax-risk} 
clarifies the relation between the integrals $\intk{\typevar}$ for different values of $\typevar$ and,
in particular, yields a formula linking metric entropy and minimax risk.
Section~\ref{sec:appli-Sobolev} focuses on the Sobolev ellipsoid and serves two purposes: 
it illustrates the advantage of averaged over pointwise measures of compactness,
and it improves on the best-known characterization of the metric entropy for Sobolev spaces while extending Pinsker's Sobolev theorem in two ways---to any bounded open domain in arbitrary finite dimension, and to the second-order term in the asymptotic expansion of the minimax risk.
More generally, the theory developed in the paper refines classical characterizations of metric entropy and minimax risk for a wide variety of semi-axis decay rates;
a selection of these refinements is presented in Appendix~\ref{sec:table}.

\paragraph{Notation and terminology.}
Given a set $A$, $A^c$ denotes its complementary and
$\mathbbm{1}_A$  its indicator function, that is, the function that returns $1$ if the input belongs to $A$ and $0$ otherwise.
$ |A| $ denotes the (potentially infinite) cardinality of $A$.
$\N$ and $\Ns$ stand for the set of natural numbers including and, respectively, excluding zero.
$\R$ and $\R_+$ are the real and non-negative real numbers; $\R^*$ and $\R_+^*$ their counterparts excluding zero.
Given $d\in\mathbb{N}^*$, $\mathbb{R}^d$ is the $d$-dimensional Euclidean space. 
$\R^{\Ns}$ denotes the space of real-valued sequences equipped with the product $\sigma$-algebra, and $\ell^2(\Ns)$ the Hilbert space of square-integrable sequences equipped with the usual $\|\cdot\|_2$-norm.


We write $C^1(\Rp,\Rp)$ for the space of functions $f$ from $\Rp$ to $\Rp$ that are differentiable with continuous derivative (denoted $f'$).
We write $\ln(\cdot)$ for the natural logarithm.
For $k\in \mathbb{N}^*$, 
the $k$-fold iterated logarithm is
\begin{equation*}
\ln^{(k)}(\cdot) 
\coloneqq \underbrace{\ln \circ \cdots \circ \ln}_{k \text{ times}} \, (\cdot).
\end{equation*}
We write $|\cdot |$ for the absolute value.
The positive and negative parts of a real number are denoted by
\begin{equation*}
	(x)_+ \coloneqq \frac{|x|+x}{2}
	\quad \text{and} \quad 
	(x)_- \coloneqq \frac{|x|-x}{2},
	\quad \text{for all } x \in \mathbb{R}.
\end{equation*}
We emphasize that both quantities are non-negative.
We also set $\ln_+(x)= \max\{\ln(x), 0\}$ for $x\geq 0$ (note that we allow $x$ to be zero, in which case $\ln_+(x)=0$).

When comparing the asymptotic behavior of two functions $f$ and $g$, as $x \to 0$, we use the notation 
\begin{equation*}
f(x) = o_{x \to 0}(g(x)) \, \text{ if }\, \lim_{x \to 0} \frac{f(x)}{g(x)} =0,
\, \text{ and } \, 
f(x) = O_{x \to 0}(g(x)) \, \text{ if } \, \lim_{x \to 0} \,  \left\lvert \frac{f(x)}{g(x)}\right\rvert \leq C,
\end{equation*}
for some constant $C>0$.
We further indicate asymptotic equivalence according to
\begin{equation*}
f(x) \sim g(x)\  \text{ as } x\to 0
\iff \lim_{x \to 0} \frac{f(x)}{g(x)} =1
\iff f(x)=g(x)+o_{x \to 0}(g(x)),
\end{equation*}
and write $f(x) \stackrel{K}{\sim} g(x)$, as $x\to 0$, when there exists an (unspecified) constant $K>0$ such that $f(x) \sim K g(x)$ as $x\to 0$.
The functions $g_1, \dots, g_N$ provide an order-$N$ asymptotic expansion of $f$, with $N\in\Ns$, if 
\begin{equation*}
g_{i+1}(x) =o_{x \to 0}(g_i(x)), \text{ for all } i\in \{1, \dots, N-1\}, \text{ and } 
f(x)=\sum_{i=1}^N g_i(x) +o_{x \to 0}(g_N(x)).
\end{equation*}
In particular, $f(x) \sim g_1(x)$ as $x\to 0$, and we say that $g_1$ is the leading term of $f$.


\section{Metric Entropy and Type-1 Integral}\label{sec:metric-entropy}

Metric entropy is the most standard quantitative measure of compactness.
Its definition, provided in Definition~\ref{def:metric-entropy},
is motivated as follows.
In a metric space, a set is compact if and only if it is complete and totally bounded---that is, it can be covered by finitely many balls of same arbitrary radius.
Metric entropy goes one step further by measuring the minimal cardinality of such coverings: 
the smaller this cardinality, the more compact the set. 
In this sense, metric entropy indeed provides a quantitative measure of compactness.

\begin{definition}\label{def:metric-entropy}
  Let $\varepsilon>0$ and let $\mathcal{K}$ be a compact subset of $\ell^2(\Ns)$.
  An $\varepsilon$-covering of $\mathcal{K}$ is a set $\{x_{(1)}, \dots, x_{(N)}\}\subset \ell^2(\Ns)$ such that for each $x\in \mathcal{K}$, there exists an $i\in\{1, \dots, N\}$ satisfying $\| x- x_{(i)}\|_2\leq \varepsilon$.
  The $\varepsilon$-covering number $N(\varepsilon;\mathcal{K})$ is the smallest cardinality of an $\varepsilon$-covering of $\mathcal{K}$.
  The metric entropy of $\mathcal{K}$ is obtained by taking the natural logarithm of the $\varepsilon$-covering number, that is $H(\varepsilon;\mathcal{K}) \coloneqq  \ln \left(N(\varepsilon;\mathcal{K})\right)$.
\end{definition}

\noindent
Some authors prefer defining metric entropy as the logarithm in base two of the covering number. 
This is common in information theory when interpreting $H(\varepsilon;\mathcal{K})$ as the bit length required to uniformly encode elements of $\mathcal{K}$ to accuracy $\varepsilon$ \cite{elbrachterDeepNeuralNetwork2021,donohoUnconditionalBasesBitLevel1996,donohoCountingBitsKolmogorov2000,shiryayevSelectedWorksKolmogorov1993}.
In that case, the corresponding results differ from our results by a multiplicative factor $\ln(2)$.

We consider the case when the compact set $\mathcal{K}$ is an ellipsoid.
Before stating the main result, 
we need to introduce an additional assumption on the semi-axis-counting function.

\vspace{.25cm}

\begin{adjustwidth}{-.075cm}{-.075cm}
\begin{quote}
  \textbf{Regularity Condition (RC) on $\ecv$.}
  There exists $f\in C^1(\Rp,\Rp)$ non-increasing such that $ \ecv (x) \sim f(x)$, as $x\to 0$, and whose elasticity 
  \begin{equation}\label{eq:definition-elasticity}
    \rho (t) \coloneqq h'(t),
    \quadtext{where } h(t) \coloneqq \ln\left(f\left(e^{-t}\right)\right),
    \quad \text{for all } t>0,
  \end{equation}
  satisfies either 
  (i) $\rho(t)$ has a finite limit as $t\to\infty$, or
  (ii) $\rho(t) \to \infty$ as $t\to\infty$, with $\ln(\rho(t)) = O_{t\to\infty}(\rho(t/2))$, and $\rho$  non-decreasing on $(t^*, \infty)$ for some $t^*>0$.
\end{quote}
\end{adjustwidth}

\vspace{.25cm}

\noindent
We refer to this regularity condition as (RC), or  $\rcb$ 
when we need to specify the limit $\lim_{t\to\infty}\rho(t) \eqcolon b\in[0, \infty]$. 
It is set to avoid pathological cases, notably to prevent an accumulation of jumps of $\ecv$. 
It is not clear whether (RC) is necessary for our result or whether it can be relaxed.
In any case, it is not restrictive; 
see Appendix~\ref{sec:table} for a list of examples satisfying it.
We note that the finite-$b$ case (item (i) in (RC)) is equivalent to $\ecv$ being regularly varying at zero (in the sense of Karamata) with index $b$; see  Appendix~\ref{sec:reg-var-zero}.

Theorem~\ref{thm:metric-entropy-integral-form} 
establishes that, under (RC), the metric entropy of ellipsoids is asymptotically equivalent to the type-1 integral $\intk{1}$, and provides a precise characterization of the error term.

\begin{theorem}\label{thm:metric-entropy-integral-form}
  Let $\mu=\semax$ be a sequence such that $\mu_n\to 0$ as $n\to\infty$,
  with corresponding semi-axis-counting function $\ecv$ satisfying (RC).
  Then, the metric entropy of $\ellip$ and its type-$1$ integral $\intk{1}$ satisfy
  \begin{equation}\label{eq:main-result-ME}
    H\left(\varepsilon; \ellip\right)
    = \intk{1}(\varepsilon) 
    + O_{\varepsilon \to 0}\left(\min\left\{\ecv (\varepsilon), \sqrt{\ecv (\varepsilon)\ln\left(\ecv (\varepsilon)\right)\ln\left(\varepsilon^{-1}\right)}\right\}\right).
  \end{equation}
  In particular, we have 
  \begin{equation}\label{eq:asymp-equivalence-ME-I1-first-order}
   H\left(\varepsilon; \ellip\right)
    \sim \intk{1}(\varepsilon),
    \quad \text{as } \varepsilon\to 0.  
  \end{equation}
\end{theorem}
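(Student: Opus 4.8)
The plan is to bound the covering number $N(\varepsilon;\ellip)$ from above and below, and show both bounds are $\intk{1}(\varepsilon)(1+o(1))$ with the stated error term. The natural first step is to truncate: since $\mu_n\to 0$, only the first $\ecv(\varepsilon)$ coordinates carry "macroscopic" mass at scale $\varepsilon$, so I would split $\ell^2(\Ns)$ into the subspace $V$ spanned by $\{e_n : \mu_n \geq \varepsilon\}$ (or a slight dilation of that threshold, say $\mu_n \geq c\varepsilon$ for a constant tuned later) and its orthogonal complement. On the complement the tail $\sum_{n:\mu_n<c\varepsilon}(x_n/\mu_n)^2\le 1$ forces $\sum x_n^2 \le (c\varepsilon)^2$, i.e. the tail lives in a ball of radius comparable to $\varepsilon$, which is covered by $O(1)$ points (or handled by a standard volumetric estimate giving an additive $O(\ecv(\varepsilon))$ contribution to the entropy). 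The core of the problem is therefore the finite-dimensional ellipsoid $\ellip \cap V$ in dimension $d=\ecv(\varepsilon)$ (or $\ecv(c\varepsilon)$).

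For the finite-dimensional piece I would use the classical volume-ratio bounds for covering numbers of convex bodies: for a symmetric convex body $K\subset\R^d$,
\begin{equation*}
  \left(\frac{1}{\varepsilon}\right)^d \frac{\vol(K)}{\vol(B_2^d)}
  \;\le\; N(\varepsilon;K)
  \;\le\; \left(\frac{3}{\varepsilon}\right)^d \frac{\vol(K + \tfrac{\varepsilon}{2}B_2^d)}{\vol(B_2^d)}.
\end{equation*}
Taking logarithms, the leading term of $H(\varepsilon;\ellip\cap V)$ is
\begin{equation*}
  \ln\frac{\vol(\ellip\cap V)}{\varepsilon^d \vol(B_2^d)}
  \;=\; \sum_{n:\mu_n\ge \varepsilon}\ln\frac{\mu_n}{\varepsilon}
  \;=\; \sum_{n}\left(\ln\frac{\mu_n}{\varepsilon}\right)_+,
\end{equation*}
and the key identity is that this sum equals exactly the integral $\intk{1}(\varepsilon)=\int_\varepsilon^\infty \ecv(u)/u\,\diff u$: indeed, by Fubini / integration by parts,
\begin{equation*}
  \int_\varepsilon^\infty \frac{\ecv(u)}{u}\,\diff u
  = \int_\varepsilon^\infty \frac{1}{u}\sum_n \mathbbm{1}_{\{\mu_n\ge u\}}\,\diff u
  = \sum_n \int_\varepsilon^{\mu_n\vee\varepsilon}\frac{\diff u}{u}
  = \sum_n\left(\ln\frac{\mu_n}{\varepsilon}\right)_+ .
\end{equation*}
So the main term is identified essentially for free; what remains is to control the gap between $N(\varepsilon;K)$ and the pure volume ratio, and the gap introduced by replacing $\varepsilon$ by $c\varepsilon$ in the truncation threshold.

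The error analysis is where (RC) enters and where the real work lies. The volumetric upper bound loses a factor $3^d = e^{O(d)}$, i.e. an additive $O(\ecv(\varepsilon))$ in the entropy, and the inflation $\ellip\cap V + \tfrac{\varepsilon}{2}B_2^d$ versus $\ellip\cap V$ contributes another $O(d)=O(\ecv(\varepsilon))$; this already gives the crude half of the $\min\{\cdot,\cdot\}$. To get the sharper $\sqrt{\ecv(\varepsilon)\ln\ecv(\varepsilon)\ln(\varepsilon^{-1})}$ bound one needs a more delicate covering/packing argument for ellipsoids — e.g. a probabilistic (random quantization) construction or a careful layered net using the dual Sudakov / Gaussian-mean-width machinery — that avoids the lossy $3^d$ factor and instead pays only a lower-order term governed by the number of "active" near-threshold coordinates and the logarithmic range $\ln(\varepsilon^{-1})$. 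The regularity condition (RC) is exactly what guarantees that $\ecv$, hence $\intk{1}$, is slowly enough varying that changing the truncation threshold from $\varepsilon$ to $c\varepsilon$ perturbs $\intk{1}(\varepsilon)$ by only $O(\ecv(\varepsilon))$ (one shows $\intk{1}(c\varepsilon) - \intk{1}(\varepsilon) = \int_{c\varepsilon}^\varepsilon \ecv(u)/u\,\diff u = O(\ecv(\varepsilon))$ since $\ecv$ is monotone and the interval has bounded logarithmic length), and that the error term is genuinely $o(\intk{1}(\varepsilon))$ — this uses the elasticity control in (RC) to compare $\ecv(\varepsilon)$ with $\intk{1}(\varepsilon)$. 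The main obstacle, I expect, is the sharp upper bound on the covering number: getting the error down to $\sqrt{\ecv\ln\ecv\ln(\varepsilon^{-1})}$ rather than $O(\ecv)$ requires a genuinely clever net construction rather than the textbook volume argument, and verifying that the two error regimes interact correctly across the full range of decay rates allowed by (RC) (in particular the fast-decay case $b=\infty$) is the delicate part.
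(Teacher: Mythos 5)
Your lower bound, the identity $\intk{1}(\varepsilon)=\sum_{n}\ln_+(\mu_n/\varepsilon)$, and the crude upper bound with an additive $O(\ecv(\varepsilon))$ error all match the paper's proof in spirit. But there are two genuine gaps. First, the sharper half of the error term, $O\bigl(\sqrt{\ecv(\varepsilon)\ln(\ecv(\varepsilon))\ln(\varepsilon^{-1})}\bigr)$, is not actually proved: you only name candidate tools (random quantization, dual Sudakov / Gaussian mean width) and acknowledge that "a genuinely clever net construction" is needed. That construction is the heart of the theorem; the paper obtains it by grouping the $d=\ecv(\tilde\varepsilon)$ active semi-axes into $k\asymp\sqrt{d\ln(\varepsilon^{-1})/\ln d}$ blocks and covering the resulting mixed ellipsoid with Rogers-type lattice coverings (Theorem~12 of \cite{secondpaper}), for which the multiplicative loss satisfies $d\ln\kappa_k=O(k\ln d)$; without some such block argument the $\sqrt{\cdot}$ term does not follow from anything you wrote.

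Second, your truncation at a dilated threshold $c\varepsilon$ with a fixed constant $c$ is not compatible with the regime where the sharp bound is actually needed. You claim $\intk{1}(c\varepsilon)-\intk{1}(\varepsilon)=\int_{c\varepsilon}^{\varepsilon}\ecv(u)u^{-1}\,du=O(\ecv(\varepsilon))$, but this requires $\ecv(c\varepsilon)=O(\ecv(\varepsilon))$, which fails when $b=\infty$ (e.g.\ $\ecv(\varepsilon)\sim c_0\exp(c\,\varepsilon^{-\alpha})$, where $\ecv(\varepsilon/2)/\ecv(\varepsilon)\to\infty$). Moreover, in that same regime $\intk{1}(\varepsilon)=o(\ecv(\varepsilon))$, so an $O(\ecv(\varepsilon))$ error is useless for concluding \eqref{eq:asymp-equivalence-ME-I1-first-order}: there the asymptotic equivalence rests entirely on the $\sqrt{\ecv\ln\ecv\ln(\varepsilon^{-1})}$ bound together with a nontrivial argument (an ODE/elasticity estimate under (RC)(ii)) showing that this quantity is $o(\intk{1}(\varepsilon))$ — a step you assert but do not supply. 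The paper sidesteps the dilation problem by perturbing the threshold only by a relative amount $1/\ecv$, i.e.\ $\tilde\varepsilon=\varepsilon/(1+\ecv(\varepsilon')^{-1})$, which changes $\intk{1}$ by $O(1)$ and, via Lemma~\ref{lem:regularity-evf-o-term} (this is where (RC) first enters), keeps $\ecv(\tilde\varepsilon)\sim\ecv(\varepsilon)$. As written, your plan therefore establishes only the $O(\ecv(\varepsilon))$ version of \eqref{eq:main-result-ME} in the slowly/regularly varying regimes, and neither the full error bound nor the equivalence \eqref{eq:asymp-equivalence-ME-I1-first-order} in general.
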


\begin{proof}[Proof sketch.]
  The relation \eqref{eq:main-result-ME} is established by proving matching upper and lower bounds.
  The lower bound comes from a standard volume argument.
  The upper bound relies on a block decomposition technique as introduced in \cite{secondpaper}.
  Finally, \eqref{eq:asymp-equivalence-ME-I1-first-order} is obtained from \eqref{eq:main-result-ME} by showing that the $O_{\varepsilon \to 0}(\cdot)$-term is always asymptotically negligible compared to $\intk{1}$. 
  The detailed proof is provided in Appendix~\ref{sec:proof-metric-entropy-integral-form}.
\end{proof}

\noindent
In many situations, one has access to the asymptotic expansion of $\ecv$.
The asymptotics are preserved through integration and therefore yield corresponding asymptotics for $\intk{1}$; 
this is formalized in Lemma~\ref{lem:abelian-integration} and the subsequent discussion in Appendix~\ref{sec:asymp-equiv-integration}.
Although the metric entropy of ellipsoids is well studied (see, e.g., \cite{firstpaper,secondpaper,ALLARD2025101762,prosserEEntropyECapacityCertain1966,edmunds1979embeddings,carlInequalitiesEigenvaluesEntropy1980,carl1981entropy,carlEntropyCompactnessApproximation1990,donohoCountingBitsKolmogorov2000,luschgyFunctionalQuantizationGaussian2002,grafSharpAsymptoticsMetric2004,luschgySharpAsymptoticsKolmogorov2004}),
prior works typically use different techniques to treat different decay behaviors of semi-axes---e.g., polynomial, exponential, or logarithmic---and often involve the tedious explicit construction of coverings,
with two exceptions discussed at the end of the section. 
On the other hand, 
Theorem~\ref{thm:metric-entropy-integral-form} offers one unified ready-to-use formula that yields sharp rates for all types of decay behavior: 
application of Theorem~\ref{thm:metric-entropy-integral-form} either retrieves the best-known results, or improves upon them.
We next illustrate this point on standard examples, 
and refer to Appendix~\ref{sec:table} for a list of results that can be obtained from \eqref{eq:asymp-equivalence-ME-I1-first-order}.

First, for finite-dimensional ellipsoids, i.e., when $\ecv(\varepsilon) \to d$ as $\varepsilon \to 0$ for some $d \in \Ns$,
we recover the standard result (see, e.g., \cite[Section~3]{shiryayevSelectedWorksKolmogorov1993})
\begin{equation}\label{eq:finite-dim-ME-formula}
    H\left(\varepsilon; \ellip\right)
    \sim \int_{\varepsilon}^\infty \frac{\ecv (\dumvar)}{\dumvar} \diff \dumvar
    \sim d \ln\left(\varepsilon^{-1}\right),
    \quad \text{as } \varepsilon\to 0. 
\end{equation}
Another fundamental case consists of polynomially-decaying semi-axes, say $\mu_n \sim (c/n)^{1/\alpha}$ as $n\to \infty$ for $c, \alpha>0$, or, equivalently, $\ecv(\varepsilon) \sim c \, \varepsilon^{-\alpha}$ as $\varepsilon\to 0$. 
Such ellipsoids typically arise as unit balls in smoothness classes (e.g., Sobolev spaces; see Section~\ref{sec:appli-Sobolev}) with $\alpha= d/k$, where $k$ is the smoothness index and $d$ the dimension of the domain.
Their metric entropy has been characterized in \cite{donohoCountingBitsKolmogorov2000,luschgySharpAsymptoticsKolmogorov2004,ALLARD2025101762,secondpaper,grafSharpAsymptoticsMetric2004};
this is one of the rare instances of infinite-dimensional set for which the leading term of the metric entropy was previously known precisely.
Once again, we recover standard results using \eqref{eq:asymp-equivalence-ME-I1-first-order} according to
\begin{equation}\label{eq:poly-decay-illustration-order-1}
  H\left(\varepsilon; \ellip\right)
  \sim \int_{\varepsilon}^\infty \frac{\ecv (\dumvar)}{\dumvar} \diff \dumvar
  \sim \int_{\varepsilon}^{\infty} \frac{c \, \dumvar^{-\alpha}}{\dumvar} \diff \dumvar
  = \frac{c\, \varepsilon^{-\alpha}}{\alpha},
  \quad \text{as } \varepsilon\to 0.
\end{equation}
Let us emphasize that there is much more to Theorem~\ref{thm:metric-entropy-integral-form} than just the asymptotic equivalence \eqref{eq:asymp-equivalence-ME-I1-first-order}:  being able to quantify the error term via \eqref{eq:main-result-ME} is also a central contribution.
For instance, given $N\in\Ns$, $c_1, \dots, c_N>0$, and $\alpha_1>\dots>\alpha_N>0$, we obtain an $N$-term generalization of \eqref{eq:poly-decay-illustration-order-1} since
\begin{equation*}
  \ecv(\varepsilon) 
  = \sum_{i=1}^N c_i \varepsilon^{-\alpha_i}+o_{\varepsilon\to0}\left(\varepsilon^{-\alpha_N}\right),
  \quadtext{implies}
  H\left(\varepsilon; \ellip\right)
  = \sum_{i=1}^N \frac{c_i\varepsilon^{-\alpha_i}}{\alpha_i}  +o_{\varepsilon\to0}\left(\varepsilon^{-\alpha_N}\right),
\end{equation*}
provided $\alpha_1 < 2\alpha_N$ (which ensures that the error term in \eqref{eq:main-result-ME} can be absorbed in the $o_{\varepsilon\to0}(\varepsilon^{-\alpha_N})$-term). 
The case $N=2$ appears already in \cite[Corollary~4]{ALLARD2025101762} and will play a central role in Section~\ref{sec:appli-Sobolev}.

By \eqref{eq:sum-formua-I1} in the proof of Theorem~\ref{thm:metric-entropy-integral-form}, 
the type-1 integral can equivalently be expressed as 
\begin{equation}\label{eq:water-filling-I1}
  \intk{1} (\varepsilon)
  = \sum_{n \in \Ns} \ln_+ \left(\frac{\mu_n}{\varepsilon}\right),
  \quad \text{for all } \varepsilon >0.
\end{equation}
With $\intk{1}$ of the form \eqref{eq:water-filling-I1}, the relation $H\left(\varepsilon; \ellip\right) \sim \intk{1}(\varepsilon)$ has a standard `water-filling' interpretation familiar to information- and communication-theorists;  see, e.g.,  \cite[Chapters~9 and 10]{cover1999elements} for a general background on water-filling arguments and \cite{donohoCountingBitsKolmogorov2000} for a discussion in the context of metric entropy of ellipsoids.
Specifically, the $n$-th dimension of the ellipsoid contributes to the metric entropy only if $\mu_n\geq \varepsilon$, and each active dimension contributes logarithmically in $\varepsilon^{-1}$, which is consistent with \eqref{eq:finite-dim-ME-formula}.
In that sense, the effective dimension of the problem is the number of semi-axes greater than $\varepsilon$, namely $\ecv(\varepsilon)$.

We conclude this section by comparing Theorem~\ref{thm:metric-entropy-integral-form} with two results that do not assume a specific decay of the semi-axes.
First, in his study of nuclear spaces,  
Mityagin established  in \cite[Theorem~3]{mityagin1961} a non-asymptotic characterization of the metric entropy in terms of $\intk{1}$,
which, with our notation, reads
\begin{equation}\label{eq:mitjagin}
  \intk{1}(2\varepsilon)
    \leq H\left(\varepsilon; \ellip\right)
    \leq \intk{1}\left(\frac{\varepsilon}{2}\right) + 2\ln(2)\ecv\left(\frac{\varepsilon}{2}\right),
  \quad \text{for all } \varepsilon > 0.
\end{equation}
Theorem~\ref{thm:metric-entropy-integral-form} and \eqref{eq:mitjagin} are complementary:  
the former is sharper but only asymptotic, 
while the latter holds non-asymptotically but fails to capture even the leading term precisely.
Moreover, using \eqref{eq:sum-formua-I1}--\eqref{eq:lower-bound-integral-ME} in the proof of Theorem~\ref{thm:metric-entropy-integral-form}, one can improve on the lower bound in \eqref{eq:mitjagin} according to $\intk{1}(\varepsilon)\leq H(\varepsilon; \ellip)$, for all $\varepsilon>0$.

Second, for finite-dimensional ellipsoids, 
the asymptotic equivalence $H\left(\varepsilon; \ellip\right) \sim \intk{1}(\varepsilon)$ 
was already obtained by Dumer, Pinsker, and Prelov in \cite{DUMER20061667}, \cite[Theorem~2]{dumer2004coverings}, and  \cite[Theorems~8 and 9]{Dumer2006}. 
Their setting differs slightly from ours as they allow the dimension $d$ of the ellipsoid, its semi-axes, and $\varepsilon$ to vary in such a way that $\intk{1}(\varepsilon)$ tends to infinity while satisfying
\begin{equation}\label{eq:condition-DPP}
  \max_{1\leq n \leq d}\ln\left(\frac{\mu_n}{\varepsilon}\right)
  = o\, \left(\frac{\left(\intk{1}(\varepsilon)\right)^2}{\ecv(\varepsilon)\ln(d)}\right).
\end{equation}
Heuristically, one can interpret \eqref{eq:condition-DPP} by identifying the dimension $d$ with the effective dimension $\ecv(\varepsilon)$ and by keeping the semi-axes fixed so that only $\varepsilon$ varies.
In that case, \eqref{eq:condition-DPP} becomes
\begin{equation*}
  \sqrt{\ecv (\varepsilon)\ln\left(\ecv (\varepsilon)\right)\ln\left(\varepsilon^{-1}\right)} = o_{\varepsilon\to 0}(\intk{1}(\varepsilon)),
\end{equation*}
which plays the same role as (RC) in our setting, 
by ensuring that the error term in \eqref{eq:main-result-ME} is negligible compared to $\intk{1}(\varepsilon)$.

\section{Minimax Risk, Type-2, and Type-3 Integrals}\label{sec:minimax-risk}

We leave metric entropy for a moment and turn to the study of the minimax risk for non-parametric estimation in the Gaussian sequence model.
Informally, the task is to denoise an unknown element known to lie in a compact set $\mathcal{K}$, observed with Gaussian noise of level $\sigma$.
When $\sigma$ scales as $1/\sqrt{m}$, the problem is equivalent to non-parametric regression from $m$ noisy samples; 
we focus on the small-noise---equivalently, large-sample---regime.
In practice, $\mathcal{K}$ is often a coefficient body relative to a basis---e.g., Besov classes in a wavelet basis, see the work of Donoho and Johnstone \cite{donoho1998minimax,donoho1994minimax,donoho2002noising,donoho1994ideal,donoho1996neo}.
We refer to  \cite{tsybakovIntroductionNonparametricEstimation2009,johnstone2019estimation,nussbaum1999minimax} for a general treatment of non-parametric estimation, with a strong emphasis on the minimax risk for the Gaussian sequence model.

Specifically, for $\sigma>0$ and $\mathcal{K} \subseteq \ell^2(\Ns)$,
the Gaussian sequence model $\gsmk$ aims to recover a sequence $x \in \mathcal{K}$ from noisy observations $y = x + \sigma \xi$, 
or, in components
\begin{equation}\label{eq:gaussian-sequence-model}
  y_n = x_n + \sigma \xi_n, 
  \quad \text{for all } n\in\Ns,
\end{equation}
where $\xi=\{\xi_n\}_{n\in\Ns}$ has independent standard normal coordinates.
A statistical estimator is a measurable map  $\hat x_\sigma\colon \R^{\Ns} \to \ell^2(\Ns)$ that takes the noisy observation $y$ to an estimate $\hat x_\sigma(y)$ of the original parameter $x$.
Its performance is assessed by the mean squared error
\begin{equation*}
  \textnormal{MSE}_{\sigma}  \left(x, \hat x_\sigma; \mathcal{K}\right) 
  \coloneqq \mathbb{E}_{y\sim x}\left[\left\|\hat x_\sigma (y)-x \right\|^2_2\right],
\end{equation*}
where $y\sim x$ indicates that the expectation is taken over $y$ distributed according to \eqref{eq:gaussian-sequence-model}. 
The minimax risk is then defined as the best achievable performance in the worst-case scenario:
take the supremum of $\textnormal{MSE}_{\sigma}  (x, \hat x_\sigma; \mathcal{K})$ over $x \in \mathcal{K}$, and then the infimum over $\hat x_\sigma$.

\begin{definition}\label{def:minimax-linear-and-nonlinear}
  Let $\sigma>0$
  and $\mathcal{K} \subseteq \ell^2(\Ns)$.
  The minimax risk associated with the Gaussian sequence model $\gsmk$ is 
    \begin{equation}\label{eq:def-non-linear-risk}
    R_{\sigma}\left(\mathcal{K}\right)
    \coloneqq \newinf_{\hat x_{\sigma}}\,  \sup_{x \in \mathcal{K}} \, \textnormal{MSE}_{\sigma}  \left(x, \hat x_\sigma; \mathcal{K}\right).
  \end{equation}
  The linear minimax risk associated with the model $\gsmk$ is obtained from the minimax risk by restricting the infimum to linear estimators, that is,
  \begin{equation}\label{eq:def-linear-risk}
    R^L_{\sigma}\left(\mathcal{K}\right)
    \coloneqq \newinf_{\hat x_{\sigma} \text{ linear}}\,  \sup_{x \in \mathcal{K}} \, \textnormal{MSE}_{\sigma} \left(x, \hat x_\sigma; \mathcal{K}\right),
  \end{equation}
  where we say that an estimator is linear if there exists a sequence of coefficients  $\{c_n\}_{n\in\Ns}$ such that 
  $\hat x_{\sigma}(y) = \{c_ny_n\}_{n\in\Ns}$.
\end{definition}

\noindent 
We call the model $\gsmk$ consistent if $R_{\sigma}\left(\mathcal{K}\right) \to 0$ as $\sigma\to 0$.
For a closed and bounded $\mathcal{K}\subset \ell^2(\Ns)$, 
consistency of $\gsmk$ is equivalent to compactness of $\mathcal{K}$
(see \cite{Ibragimov1997,Ibragimov1977} and \cite[Theorem~5.7]{johnstone2019estimation}).
This motivates using the decay of the minimax risk as a measure of compactness:
the faster $R_{\sigma}\left(\mathcal{K}\right)$ tends to zero, the more compact $\mathcal{K}$ is.
Specializing to ellipsoids $\mathcal{K}=\ellip$, 
we thus expect a direct link between $R_{\sigma}\left(\ellip\right)$ (or $R_{\sigma}^L\left(\ellip\right)$)
and other measures of compactness previously considered, such as the type-$\typevar$ integrals $\intk{\typevar}$ for $\tau\geq 1$.
The next result makes this link precise by relating the linear minimax risk to the type-2 and type-3 integrals.

\begin{theorem}\label{thm:linear-risk-formula}
  Let $\sigma>0$
  and let $\mu=\semax$ be a sequence such that $\mu_n\to 0$ as $n\to\infty$.
  The linear minimax risk associated with the Gaussian sequence model $\gsm$ satisfies
  \begin{equation}\label{eq:linear-risk-closed-form}
     R_{\sigma}^L \left(\ellip\right)
     = \sigma^2 \varepsilon_{\sigma} \intk{2}(\varepsilon_{\sigma}),
  \end{equation}
  where $\varepsilon_{\sigma}$ is defined as the unique solution of the equation (in $\varepsilon >0$)
  \begin{equation}\label{eq:definition-critical-radius}
    \sigma^2 \left(2 \intk{3}(\varepsilon) - \frac{\intk{2}(\varepsilon)}{\varepsilon} \right) = 1.
  \end{equation}
\end{theorem}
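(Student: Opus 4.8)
The plan is to reduce the linear minimax risk over the ellipsoid to a concrete optimization problem over the coefficient sequence $\{c_n\}_{n\in\Ns}$, solve it exactly, and then re-express the optimal value and the optimality condition in terms of the type-$\tau$ integrals. First I would recall the classical computation of the risk of a diagonal linear estimator $\hat x_\sigma(y) = \{c_n y_n\}$: for a fixed $x\in\ellip$ one has $\mathrm{MSE}_\sigma(x,\hat x_\sigma;\ellip) = \sum_n (1-c_n)^2 x_n^2 + \sigma^2 \sum_n c_n^2$. Taking the supremum over $x\in\ellip$ with the constraint $\sum_n x_n^2/\mu_n^2 \le 1$ is a straightforward concave maximization in the variables $x_n^2$; the worst-case $x$ puts all its mass on the coordinate(s) maximizing $(1-c_n)^2\mu_n^2$, giving $\sup_{x\in\ellip}\mathrm{MSE}_\sigma = \sup_n (1-c_n)^2\mu_n^2 + \sigma^2\sum_n c_n^2$. (One must also argue, as is standard, e.g.\ via the Gaussian sequence theory in \cite{johnstone2019estimation}, that restricting to diagonal linear estimators is without loss for the linear minimax risk over an ellipsoid, since the problem decouples across coordinates.) Minimizing this over $\{c_n\}$ is the heart of the matter.

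Next I would solve the minimization $\inf_{\{c_n\}}\big[\sup_n (1-c_n)^2\mu_n^2 + \sigma^2\sum_n c_n^2\big]$. The standard trick is to introduce the common value $\lambda := \sup_n (1-c_n)^2\mu_n^2$ and optimize: for a given budget $\lambda$, each $c_n$ should be as small as possible subject to $(1-c_n)^2\mu_n^2 \le \lambda$, i.e.\ $c_n = (1 - \sqrt{\lambda}/\mu_n)_+$ when $\mu_n > 0$ (and $c_n=0$ otherwise). Substituting back, the risk as a function of $\lambda$ becomes $\lambda + \sigma^2 \sum_{n:\mu_n>\sqrt\lambda}(1-\sqrt\lambda/\mu_n)^2$, and one minimizes over $\lambda>0$. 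Setting $\varepsilon = \sqrt\lambda$, I expect the objective to take the form $\varepsilon^2 + \sigma^2\sum_{n:\mu_n>\varepsilon}(1-\varepsilon/\mu_n)^2$; differentiating in $\varepsilon$ and setting the derivative to zero yields the first-order condition characterizing the optimal $\varepsilon_\sigma$, and the optimal value is then read off. The final step is bookkeeping: rewrite the two sums $\sum_{n:\mu_n>\varepsilon}(1-\varepsilon/\mu_n)^2$ and its derivative in integral form. Using the layer-cake / Abel-summation identity $\sum_n g(\mu_n)\mathbbm 1_{\mu_n>\varepsilon} = \int$-type manipulations against $\ecv$, one gets $\sum_{n:\mu_n>\varepsilon}(1-\varepsilon/\mu_n)^2 = \varepsilon^2\big(\intk{3}(\varepsilon) \cdot(\text{const}) - \dots\big)$; concretely I anticipate
\begin{equation*}
  \sum_{n:\mu_n>\varepsilon}\left(1-\frac{\varepsilon}{\mu_n}\right)^2
  = 2\varepsilon^2\intk{3}(\varepsilon) - 2\varepsilon\intk{2}(\varepsilon) + \ecv(\varepsilon)\varepsilon^0\cdot 0,
\end{equation*}
after integrating by parts, and similarly $\sum_{n:\mu_n>\varepsilon}(1-\varepsilon/\mu_n)\varepsilon/\mu_n = \varepsilon\intk{2}(\varepsilon) - \varepsilon^2\cdot(\text{something})$, so that matching these against \eqref{eq:definition-critical-radius} and $\sigma^2\varepsilon\intk{2}(\varepsilon)$ requires only elementary algebra.

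The main obstacle I foresee is twofold. The first is the justification that diagonal linear estimators are optimal among all linear estimators for an ellipsoid — this is classical but needs to be invoked carefully, since $\ellip$ may be infinite-dimensional and some coordinates may have $\mu_n=0$; the boundedness of $\ellip$ and the product structure of the noise make the off-diagonal entries of the optimal $c$ vanish, but one should cite or sketch this. The second, more technical, obstacle is the conversion between the discrete sums over $\{n:\mu_n>\varepsilon\}$ and the integrals $\intk{2},\intk{3}$: one needs to be careful with the boundary terms in the integration by parts (the jumps of $\ecv$ and the behavior as $\dumvar\to\infty$, where $\ecv$ is eventually zero so these vanish), and with verifying that \eqref{eq:definition-critical-radius} genuinely has a unique solution — monotonicity of the left-hand side of \eqref{eq:definition-critical-radius} in $\varepsilon$, together with its limits at $0$ and $\infty$, should give existence and uniqueness, using $\mu_n\to 0$ to control the behavior near zero. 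Once these are in place, \eqref{eq:linear-risk-closed-form} follows by substituting the optimal $\varepsilon_\sigma$ into the optimal-value expression and simplifying.
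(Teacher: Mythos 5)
Your plan is correct and follows essentially the same route as the paper's proof: reduce to the one-parameter Pinsker family $c_n=(1-\varepsilon/\mu_n)_+$, write $R^L_\sigma(\ellip)=\inf_{\varepsilon>0}\{\sigma^2\sum_n(1-\varepsilon/\mu_n)_+^2+\varepsilon^2\}$, convert the sum to the type-$2$ and type-$3$ integrals by Stieltjes integration by parts against $\ecv$, and obtain \eqref{eq:definition-critical-radius} and \eqref{eq:linear-risk-closed-form} from the first-order condition together with a monotonicity argument for uniqueness; the only variation is that you derive the optimal coefficient form directly via the level-set parametrization in $\lambda$, whereas the paper cites Pinsker for it. Two small corrections: your anticipated identity has the sign flipped—the correct formula is $\sum_{n}(1-\varepsilon/\mu_n)_+^2 = 2\varepsilon\,\intk{2}(\varepsilon)-2\varepsilon^2\,\intk{3}(\varepsilon)$, which is non-negative as it must be—and the worry about diagonal versus general linear estimators is moot here, since Definition~\ref{def:minimax-linear-and-nonlinear} already defines linear estimators to be coordinatewise multipliers.
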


\begin{proof}[Proof sketch]
  The relation \eqref{eq:linear-risk-closed-form} follows from \cite{pinsker1980optimal} after a Stieltjes integration by parts.
  The existence and uniqueness of the solution of \eqref{eq:definition-critical-radius} are obtained by studying the variations of the left-hand side as a function of $\varepsilon$.
  We refer to Appendix~\ref{sec:proof-theorem-linear-risk-formula} for the detailed proof.
\end{proof}

\noindent
Theorem~\ref{thm:linear-risk-formula} guarantees that the equation \eqref{eq:definition-critical-radius} admits a unique solution $\varepsilon_{\sigma}$,
which we call the critical radius of $\ellip$ for reasons that will become clear in Section~\ref{sec:metric-entropy-vs-minimax-risk}.
Using \eqref{eq:definition-Phi-RL}, \eqref{eq:reformulation-linear-risk-integral}, and \eqref{eq:last-step-proof-RL-1}--\eqref{eq:last-step-proof-RL-2} from the proof of Theorem~\ref{thm:linear-risk-formula}, the relation between the critical radius and the linear minimax risk can be equivalently written as
\begin{equation}\label{eq:linear-risk-integral}
    R_{\sigma}^L \left(\ellip\right)
    = \newinf_{\varepsilon>0} \left\{ 2 \sigma^2  \varepsilon \,  \left(\intk{2}(\varepsilon)-\intk{3}(\varepsilon) \, \varepsilon \right)+ \varepsilon^2\right\}
    =2\sigma^2 \varepsilon_{\sigma} \,  \left(\intk{2}(\varepsilon_{\sigma})-\intk{3}(\varepsilon_{\sigma}) \, \varepsilon_{\sigma} \right)+ \varepsilon_{\sigma}^2,
\end{equation}
for all $\sigma>0$.
The identity \eqref{eq:linear-risk-integral} is central to the proof of Theorem~\ref{thm:bias-variance-decomp}, stated in Section~\ref{sec:metric-entropy-vs-minimax-risk}.

To illustrate the use of Theorem~\ref{thm:linear-risk-formula}, 
we come back to the regularly varying semi-axis-counting function considered after Theorem~\ref{thm:metric-entropy-integral-form}, namely $\ecv(\varepsilon) \sim c \, \varepsilon^{-\alpha}$ as $\varepsilon\to 0$ for $c, \alpha>0$.
In this case (see \eqref{eq:ecv-implies-Itau-asymp} in Appendix~\ref{sec:asymp-equiv-integration}),
\begin{equation}\label{eq:I2-I3-reg-var-asymptotics}
  \intk{2}(\varepsilon) 
  \sim \frac{c \, \varepsilon^{-\alpha-1}}{\alpha+  1}
  \quadtext{and}
  \intk{3}(\varepsilon) 
  \sim \frac{c \, \varepsilon^{-\alpha-2}}{\alpha+2},
  \quad \text{as } \varepsilon\to 0.
\end{equation}
Using these relations to solve \eqref{eq:definition-critical-radius} asymptotically yields 
\begin{equation}\label{eq:critical-radius-asymp-reg-var}
  \varepsilon_\sigma 
  \sim \left(\frac{c\, \alpha\, \sigma^2}{(\alpha+1)(\alpha+2)}\right)^{\frac{1}{\alpha+2}},
  \quad \text{as } \sigma\to 0,
\end{equation}
which, when substituted into \eqref{eq:linear-risk-closed-form}, gives 
\begin{equation}\label{eq:linear-risk-reg-var}
  R_{\sigma}^L \left(\ellip\right)
  \sim \left(\frac{\alpha+2}{\alpha}\right)^{\frac{\alpha}{\alpha+2}} \left(\frac{c\, \sigma^2 }{\alpha+1}\right)^{\frac{2}{\alpha+2}},
  \quad \text{as } \sigma\to 0.
\end{equation}
This recovers the so-called Pinsker constant; 
compare with \cite[Theorem~3.1]{tsybakovIntroductionNonparametricEstimation2009} upon setting $\alpha = 1/\beta$ and $c = Q^{1/\beta}$.
Similar arguments yield the leading term of the linear minimax risk for various asymptotic behaviors of $\ecv$; 
we provide some examples in Appendix~\ref{sec:table}.
Note that Theorem~\ref{thm:linear-risk-formula} further allows for the derivation of higher orders.
For instance, if $\ecv(\varepsilon) = c _1 \varepsilon^{-\alpha_1} + c _2 \varepsilon^{-\alpha_2} +o_{\varepsilon\to 0}(\varepsilon^{-\alpha_2})$, for $c_1, c_2>0$ and $\alpha_1>\alpha_2>0$, 
following the same steps as \eqref{eq:I2-I3-reg-var-asymptotics}--\eqref{eq:linear-risk-reg-var} while keeping track of the second order terms yields
\begin{align}
  R_{\sigma}^L \left(\ellip\right)
  = 
  &\left(\frac{\alpha_1+2}{\alpha_1}\right)^{\frac{\alpha_1}{\alpha_1+2}} \left(\frac{c_1 \sigma^2 }{\alpha_1+1}\right)^{\frac{2}{\alpha_1+2}} \label{eq:two-term-linear-risk-reg-var}\\
  &\quad + \frac{2c_2(\alpha_1+1)}{c_1(\alpha_2+1)(\alpha_2+2)}\left(\frac{\alpha_1+2}{\alpha_1}\right)^{\frac{\alpha_2}{\alpha_2+2}}\left(\frac{c_1 \sigma^2 }{\alpha_1+1}\right)^{\frac{\alpha_1-\alpha_2+2}{\alpha_1+2}}
  + o_{\sigma\to 0}\left(\sigma^{\frac{2\alpha_1-2\alpha_2+4}{\alpha_1+2}}\right).\nonumber
\end{align}
An analogous derivation is provided in great details in the proof of Theorem~\ref{thm:pinsker-sobolev-order-two} (see Appendix~\ref{sec:proof-pinsker-sob-order-two}).

Corresponding results for the non-linear minimax risk follow by asymptotically relating it to the linear minimax risk.
In fact, Pinsker's asymptotic minimaxity theorem \cite[Theorem~5.3]{johnstone2019estimation} guarantees $R_{\sigma}\left(\ellip\right) \sim R^L_{\sigma}\left(\ellip\right)$, as $\sigma\to 0$.
Consequently, Theorem~\ref{thm:linear-risk-formula} also provides the leading term of the non-linear minimax risk: $R_{\sigma}\left(\ellip\right) \sim \sigma^2 \varepsilon_{\sigma} \intk{2}(\varepsilon_{\sigma})$ as $\sigma\to 0$.
To characterize higher order terms, however, 
a stronger version of Pinsker's asymptotic minimaxity theorem is required.
Theorem~\ref{thm:risk-linear-2-nonlinear} characterizes the error term in Pinsker's asymptotic minimaxity theorem in terms of the Lambert W function \cite{corlessLambertFunction1996}.

\begin{theorem}\label{thm:risk-linear-2-nonlinear}
  Let $\mu=\semax$ be a sequence such that $\mu_n\to 0$ as $n\to\infty$.
  Then, the linear and non-linear minimax risks associated with the Gaussian sequence model $\gsm$ satisfy
  \begin{equation}\label{eq:non-asymp-ratio-lin-non-lin-risks}
    \left\lvert \frac{R_{\sigma}\left(\ellip\right)}{R^L_{\sigma}\left(\ellip\right)} -1 \right\rvert
    \leq \frac{4\sqrt{2} \,  \sigma}{\varepsilon_\sigma}\sqrt{W \left(\left(\frac{\left(1+\sqrt{3}\right) \mu_*^2 \, \varepsilon_\sigma}{ \sqrt{2} \, \sigma R^L_{\sigma}\left(\ellip\right)}\right)^2\right)},
    \quad \text{for all } \sigma>0,
  \end{equation}
  where $\varepsilon_\sigma$ is the critical radius of $\ellip$, $W$ is the Lambert W function, and $\mu_* \coloneqq \max_{n\in\Ns} \mu_n$.
\end{theorem}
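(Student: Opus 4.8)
The plan is to control the ratio $R_\sigma(\ellip)/R^L_\sigma(\ellip)$ by sandwiching the non-linear risk between the linear risk and a slightly inflated version of it, quantifying the inflation via a careful bookkeeping of the coordinates that carry most of the risk. First I would invoke Pinsker's machinery: the non-linear minimax risk over an ellipsoid is bounded below trivially by a Bayes risk over a suitable hyperrectangle sub-problem, and bounded above by the linear minimax risk; the content of the theorem is a \emph{quantitative} gap between these. Concretely, I would fix the least favorable Gaussian prior used in the standard proof of Pinsker's asymptotic minimaxity (the one supported on the coordinate-wise boxes of half-widths $\propto(\mu_n - \varepsilon_\sigma)_+$ after the water-filling in \eqref{eq:linear-risk-integral}) and compare the linear (Wiener) estimator against the exact Bayes estimator for that prior. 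The difference in Bayes risk between the two is governed, coordinate by coordinate, by how far each box half-width deviates from the Gaussian-approximation regime — the relevant per-coordinate quantity is the signal-to-noise ratio $(\mu_n^2 - \text{(something)})/\sigma^2$ truncated at the active coordinates, and summing these deviations is where the $\mu_*^2$, the $\varepsilon_\sigma$, and the factor $R^L_\sigma(\ellip)$ in the argument of $W$ come from.

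The key steps, in order, would be: (1) Reduce to a one-dimensional bounded normal mean problem on each active coordinate $n$ (those with $\mu_n > \varepsilon_\sigma$, of which there are $\ecv(\varepsilon_\sigma)$), using the fact that the Pinsker prior is a product measure; the linear minimax risk decomposes as $\sum_n$ of per-coordinate linear risks and, by \eqref{eq:linear-risk-integral}, this sum equals $R^L_\sigma(\ellip)$. (2) On each coordinate, bound $|\rho_N(\tau_n) - \rho_L(\tau_n)|$, the gap between the non-linear and linear Bayes risks of a normal-mean problem with bound $\tau_n \propto (\mu_n-\varepsilon_\sigma)$ and noise $\sigma$, by a known estimate of the form $C\,\sigma\tau_n\,e^{-\tau_n^2/(c\sigma^2)}$ (the exponentially small correction to the Gaussian-prior approximation of the bounded-mean Bayes risk); this is where the constants $1+\sqrt3$ and $4\sqrt2$ will be pinned down. (3) Sum over active coordinates: $\sum_n \sigma\tau_n e^{-\tau_n^2/(c\sigma^2)} \le \sigma\mu_*\,\ecv(\varepsilon_\sigma)\,\max_n e^{-\tau_n^2/(c\sigma^2)}$, and then trade off $\ecv(\varepsilon_\sigma)$ against the exponential by using that $\sigma^2\ecv(\varepsilon_\sigma)$ is comparable to $R^L_\sigma(\ellip)/\varepsilon_\sigma^0$-type quantities via \eqref{eq:linear-risk-closed-form}--\eqref{eq:definition-critical-radius}; the $\sup_x(\,x e^{-x}\,)$-type optimization over the worst coordinate is exactly what produces the Lambert $W$ term, since $W(z^2)$ is the inverse of $w\mapsto w e^w$ evaluated appropriately. (4) Divide through by $R^L_\sigma(\ellip)$ and collect constants to land on \eqref{eq:non-asymp-ratio-lin-non-lin-risks}.

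I expect the main obstacle to be step (3) — turning the sum of per-coordinate exponential corrections into the closed-form Lambert-$W$ bound with the specific constants advertised. The difficulty is twofold: one must bound $\max_n e^{-\tau_n^2/(c\sigma^2)}$ uniformly without knowing the individual $\tau_n$, which forces using only the aggregate constraints $\sum_n \tau_n^2 \asymp \varepsilon_\sigma R^L_\sigma/\sigma^2$ (from the ellipsoid constraint at the critical radius) and $\tau_n \le \mu_*$; and one must then solve the resulting scalar transcendental inequality $t\,e^{-t} \le (\text{data})$ exactly in terms of $W$, being careful about which branch and which argument ($z^2$ rather than $z$) appears. A secondary, more routine obstacle is step (2): locating or re-deriving the sharp exponential estimate for the non-linear-vs-linear Bayes risk gap of a bounded normal mean with the right leading constant — this is classical (it underlies Pinsker's theorem and appears in Donoho--Liu--MacGibbon and in Johnstone's monograph) but extracting explicit constants rather than $O(\cdot)$ requires care. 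Once those two are in hand, steps (1) and (4) are bookkeeping that reuses \eqref{eq:linear-risk-closed-form}, \eqref{eq:definition-critical-radius}, and \eqref{eq:linear-risk-integral} directly.
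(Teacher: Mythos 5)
There is a genuine gap, and it sits exactly where you predicted the difficulty would be, but it is worse than a technical obstacle: the mechanism you propose is not the one that can deliver the bound. Your step (2) asserts a per-coordinate estimate of the form $\lvert \rho_N(\tau_n)-\rho_L(\tau_n)\rvert \le C\,\sigma\tau_n\,e^{-\tau_n^2/(c\sigma^2)}$ for the bounded normal mean problem. No such exponential estimate holds: for large signal-to-noise ratio $\tau/\sigma$ both risks approach $\sigma^2$ only polynomially (the linear risk at rate $\sigma^4/\tau^2$, the non-linear one at a comparable polynomial rate governed by minimal Fisher information on an interval), so the gap is polynomial, not exponential, in $\tau/\sigma$; and at moderate $\tau/\sigma$ a rectangular per-coordinate reduction can never do better than the Ibragimov--Khasminskii constant $\mu^*\approx 1.25$, which is a fixed multiplicative loss per coordinate and cannot be summed into a vanishing relative error. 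This is precisely why Pinsker's theorem is \emph{not} proved via hardest rectangular subproblems: that route (Donoho--Liu--MacGibbon) yields constant-factor comparisons, not the $1+o(1)$ statement, let alone the explicit non-asymptotic bound \eqref{eq:non-asymp-ratio-lin-non-lin-risks}. A related confusion is your description of the least favorable prior as ``supported on coordinate-wise boxes'': the prior in Pinsker's argument is a Gaussian product measure with variances proportional to $\sigma^2(\mu_n/\varepsilon_\sigma-1)_+$, and the loss relative to the linear risk is a \emph{global} phenomenon (the prior leaking out of the ellipsoid), not a sum of per-coordinate approximation errors.

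The paper's proof instead quantifies the classical Gaussian-prior argument: take the Pinsker product prior shrunk by a slack factor $1-r_\sigma$, condition it to $\ellip$, lower-bound the Bayes risk of the conditioned prior by that of the Gaussian prior minus a term controlled (via Jensen, Cauchy--Schwarz, fourth moments, and \eqref{eq:klsdnsssdffnkqma}) by $2(1+\sqrt3)\mu_*^2\sqrt{\nu_\sigma^G(\ellip^c)}$, and bound the escape probability $\nu_\sigma^G(\ellip^c)$ by weighted-$\chi^2$ concentration as $\exp(-r_\sigma^2\varepsilon_\sigma^2/(8\sigma^2))$. The Lambert $W$ then does not come from optimizing a sum of exponentials over coordinates, as in your step (3); it comes from choosing $r_\sigma$ as the solution of the fixed-point equation $2(1+\sqrt3)\mu_*^2\exp\bigl(-r_\sigma^2\varepsilon_\sigma^2/(16\sigma^2)\bigr)=r_\sigma R^L_\sigma(\ellip)$, which by the defining identity $x=W(x)e^{W(x)}$ is exactly the expression in \eqref{eq:nvdksjnkenljkqbrkjetre}; this yields $R_\sigma \ge (1-2r_\sigma)R^L_\sigma$ and hence the stated bound with the factor $4\sqrt2$. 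So the constants $1+\sqrt3$ and the argument of $W$ are artifacts of the global conditioning-plus-concentration argument, and your plan would need to be replaced by (rather than repaired into) that argument to reach \eqref{eq:non-asymp-ratio-lin-non-lin-risks}.
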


\begin{proof}[Proof sketch]
  Theorem~\ref{thm:risk-linear-2-nonlinear} is obtained by keeping track of the error terms in the classical proof of Pinsker's  asymptotic minimaxity theorem;
  see Appendix~\ref{sec:proof-theorem-risk-linear-2-nonlinear}.
\end{proof}

\noindent
Using the asymptotics of the Lambert W function---namely, $W(x) \sim \ln(x)$ as $x\to\infty$ (see \cite{corlessLambertFunction1996} and \cite[Chapter~2.4]{de1981asymptotic})---Theorem~\ref{thm:risk-linear-2-nonlinear} can be turned into the asymptotic statement 
\begin{equation}\label{eq:asymp-linear-to-nonlinear-risk}
  R_{\sigma}\left(\ellip\right)
  = R^L_{\sigma}\left(\ellip\right) + O_{\sigma\to 0} \left(\frac{\sqrt{\ln(v_\sigma)}}{v_\sigma}\right),
  \quadtext{where} v_\sigma \coloneqq \frac{\varepsilon_\sigma}{\sigma R^L_{\sigma}\left(\ellip\right)},
\end{equation}
if $v_\sigma \to \infty$ as $\sigma\to 0$.
Thus, the linear and non-linear risks differ by a remainder that is asymptotically negligible whenever $\sqrt{\ln(v_\sigma)} = o_{\sigma\to 0}(v_\sigma R^L_{\sigma}\left(\ellip\right))$, in which case \eqref{eq:asymp-linear-to-nonlinear-risk} improves on Pinsker's asymptotic minimaxity theorem.
This provides a general strategy for obtaining higher-order asymptotics of the non-linear risk: 
(i) derive the expansion of the linear risk via Theorem~\ref{thm:linear-risk-formula}, and (ii) transfer it by showing that the $O_{\sigma\to 0}(\cdot)$-term in \eqref{eq:asymp-linear-to-nonlinear-risk} is negligible. 
For instance, if $\ecv(\varepsilon) \sim c_1 \, \varepsilon^{-\alpha_1}$ as $\varepsilon\to 0$ with $c_1, \alpha_1>0$, 
then \eqref{eq:critical-radius-asymp-reg-var} and \eqref{eq:linear-risk-reg-var} give $v_\sigma \stackrel{K}{\sim} \sigma^{-(\alpha_1+4)/(\alpha_1+2)}$ as $\sigma\to 0$.
In particular, the $O_{\sigma\to 0}(\cdot)$-term in \eqref{eq:asymp-linear-to-nonlinear-risk} is dominated by the $o_{\sigma\to 0}(\cdot)$-term appearing in \eqref{eq:two-term-linear-risk-reg-var} if $2\alpha_2>\alpha_1$.
Consequently, combining \eqref{eq:two-term-linear-risk-reg-var} with \eqref{eq:asymp-linear-to-nonlinear-risk}, if  $\ecv(\varepsilon) = c _1 \varepsilon^{-\alpha_1} + c _2 \varepsilon^{-\alpha_2} +o_{\varepsilon\to 0}(\varepsilon^{-\alpha_2})$, for $c_1, c_2>0$ and $ \alpha_1, \alpha_2>0$ such that $\alpha_2<\alpha_1<2\alpha_1$, then
\begin{align*}
  R_{\sigma} \left(\ellip\right)
  = 
  &\left(\frac{\alpha_1+2}{\alpha_1}\right)^{\frac{\alpha_1}{\alpha_1+2}} \left(\frac{c_1 \sigma^2 }{\alpha_1+1}\right)^{\frac{2}{\alpha_1+2}}\\
  &\quad + \frac{2c_2(\alpha_1+1)}{c_1(\alpha_2+1)(\alpha_2+2)}\left(\frac{\alpha_1+2}{\alpha_1}\right)^{\frac{\alpha_2}{\alpha_2+2}}\left(\frac{c_1 \sigma^2 }{\alpha_1+1}\right)^{\frac{\alpha_1-\alpha_2+2}{\alpha_1+2}}
  + o_{\sigma\to 0}\left(\sigma^{\frac{2\alpha_1-2\alpha_2+4}{\alpha_1+2}}\right).
\end{align*}

\section{Metric Entropy and Minimax Risk}\label{sec:metric-entropy-vs-minimax-risk}

Section~\ref{sec:metric-entropy} shows that the metric entropy of $\ellip$ is asymptotically determined by the type-$1$ integral $\intk{1}$.
Section~\ref{sec:minimax-risk} characterizes the linear minimax risk---and therefore the asymptotics of the (non-linear) minimax risk---in terms of the type-$2$ and type-$3$ integrals $\intk{2}$ and $\intk{3}$.
To obtain an explicit relation between metric entropy and minimax risk,
we thus need to relate the asymptotics of the various integrals $\intk{\typevar}$ as $\typevar$ varies.

First, the asymptotics of $\ecv$ always determine those of $\intk{\typevar}$, for every $\typevar\geq 1$; 
this is a consequence of the preservation of asymptotics under integration (see Lemma~\ref{lem:abelian-integration} in Appendix~\ref{sec:asymp-equiv-integration} and the subsequent discussion).
The converse fails in general, 
but it does hold for regularly varying $\intk{\typevar}$ (see Lemma~\ref{lem:tauberian-integration} in Appendix~\ref{sec:asymp-equiv-integration}).
However, even in the regularly varying case, second-order asymptotics for $\intk{\typevar}$ do not in general transfer to $\ecv$;
this limitation will play a concrete role in Section~\ref{sec:appli-Sobolev}.

We just saw that $\ecv$ asymptotically determines $\intk{\typevar}$,
but how do the integrals $\intk{\typevar}$ compare for different values of $\typevar$? 
The next lemma provides a formula to transfer properties from a type-$\typevar_1$ integral to a type-$\typevar_2$ integral.

\begin{lemma}\label{lem:integral-k-to-k+1}
  Let $\typevar_1, \typevar_2 \geq 1$   
  and let $\mu=\semax$ be a sequence of semi-axes such that $\mu_n\to 0$ as $n\to\infty$.
  Then, the corresponding 
  integrals of types $\typevar_1$ and $\typevar_2$ satisfy
  \begin{equation}\label{eq:lemma-integral-k-to-k+1}
  \intk{\typevar_1}(\varepsilon) 
  = \intk{\typevar_2}(\varepsilon)\, \varepsilon^{\typevar_2-\typevar_1} 
  + (\typevar_2-\typevar_1) \int_{\varepsilon}^{\infty} \intk{\typevar_2}(\dumvar)\, \dumvar^{\typevar_2-\typevar_1-1} \diff \dumvar,
  \quad \text{for all } \varepsilon>0.
\end{equation}
\end{lemma}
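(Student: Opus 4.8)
The plan is to verify \eqref{eq:lemma-integral-k-to-k+1} directly by substituting the definition \eqref{eq:main-definition-integrals-order-k} of $\intk{\typevar_2}$ into the right-hand side and interchanging the order of the two nested integrals. Write $\delta \coloneqq \typevar_2 - \typevar_1$ and first assume $\delta \neq 0$. Since $\mu_n \to 0$, the function $\ecv$ is finite-valued and vanishes on $(\mu_*, \infty)$ with $\mu_* \coloneqq \max_{n\in\Ns}\mu_n$; consequently $\intk{\typevar_2}(u)$ is finite for every $u>0$ and vanishes for $u \geq \mu_*$, so all integrals appearing below are in fact integrals of bounded functions over the bounded interval $[\varepsilon, \mu_*]$ and are therefore finite.

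The key step is the exchange of integrals. Starting from
\begin{equation*}
  (\typevar_2 - \typevar_1)\int_\varepsilon^\infty \intk{\typevar_2}(\dumvar)\, \dumvar^{\delta-1}\diff\dumvar
  = \delta \int_\varepsilon^\infty \dumvar^{\delta-1}\left(\int_\dumvar^\infty \frac{\ecv(\dumvartwo)}{\dumvartwo^{\typevar_2}}\diff\dumvartwo\right)\diff\dumvar,
\end{equation*}
the integrand $\dumvar^{\delta-1}\ecv(\dumvartwo)/\dumvartwo^{\typevar_2}$ is non-negative on the region $\{(\dumvar,\dumvartwo): \varepsilon \leq \dumvar \leq \dumvartwo < \infty\}$, so Tonelli's theorem applies and yields
\begin{equation*}
  \delta \int_\varepsilon^\infty \frac{\ecv(\dumvartwo)}{\dumvartwo^{\typevar_2}}\left(\int_\varepsilon^{\dumvartwo} \dumvar^{\delta-1}\diff\dumvar\right)\diff\dumvartwo
  = \int_\varepsilon^\infty \frac{\ecv(\dumvartwo)}{\dumvartwo^{\typevar_2}}\left(\dumvartwo^{\delta} - \varepsilon^{\delta}\right)\diff\dumvartwo,
\end{equation*}
where I used $\int_\varepsilon^{\dumvartwo}\dumvar^{\delta-1}\diff\dumvar = (\dumvartwo^{\delta}-\varepsilon^{\delta})/\delta$, valid for any $\delta\neq 0$. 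Splitting the last integral and using $\dumvartwo^{\delta}/\dumvartwo^{\typevar_2} = \dumvartwo^{-\typevar_1}$ identifies the two pieces as $\intk{\typevar_1}(\varepsilon)$ and $\varepsilon^{\delta}\intk{\typevar_2}(\varepsilon)$ respectively, so the displayed quantity equals $\intk{\typevar_1}(\varepsilon) - \varepsilon^{\typevar_2-\typevar_1}\intk{\typevar_2}(\varepsilon)$; rearranging gives \eqref{eq:lemma-integral-k-to-k+1}. The remaining case $\delta = 0$ is immediate: then $\typevar_1 = \typevar_2$, the prefactor $\typevar_2 - \typevar_1$ kills the second term on the right-hand side, and $\varepsilon^{\typevar_2-\typevar_1} = 1$, so \eqref{eq:lemma-integral-k-to-k+1} reduces to the tautology $\intk{\typevar_1}(\varepsilon) = \intk{\typevar_2}(\varepsilon)$.

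I do not anticipate a serious obstacle here; the only point requiring a little care is the justification of the interchange of integrals, which is why I would phrase it via Tonelli on the non-negative integrand rather than Fubini. An alternative route is integration by parts applied to $\dumvar \mapsto \dumvar^{\delta}\intk{\typevar_2}(\dumvar)$ on $[\varepsilon,\infty)$, using that $\intk{\typevar_2}$ is absolutely continuous with $\intk{\typevar_2}'(\dumvar) = -\ecv(\dumvar)/\dumvar^{\typevar_2}$ for almost every $\dumvar$ (the step-function nature of $\ecv$ is harmless) and that the boundary term at infinity vanishes because $\intk{\typevar_2}$ is eventually zero; this produces the same identity. Either way the argument is short and elementary.
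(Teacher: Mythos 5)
Your Tonelli argument is correct: the inner integrand is non-negative, the constant $\typevar_2-\typevar_1$ is pulled out before interchanging, the inner integral $\int_\varepsilon^{\dumvartwo}\dumvar^{\delta-1}\diff\dumvar=(\dumvartwo^{\delta}-\varepsilon^{\delta})/\delta$ is evaluated correctly, and the finiteness remarks (both $\ecv$ and $\intk{\typevar_2}$ vanish beyond $\mu_*$, so everything lives on $[\varepsilon,\mu_*]$) justify splitting the resulting integral into the two pieces $\intk{\typevar_1}(\varepsilon)$ and $\varepsilon^{\typevar_2-\typevar_1}\intk{\typevar_2}(\varepsilon)$; the degenerate case $\typevar_1=\typevar_2$ is handled correctly as well. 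The paper proceeds differently: it rewrites the integrand of $\intk{\typevar_1}$ as $\dumvar^{\typevar_2-\typevar_1}$ times $-\intk{\typevar_2}'(\dumvar)$ and integrates by parts on $[\varepsilon,\infty)$, i.e.\ exactly the alternative route you sketch in your closing remark. The two arguments buy slightly different things: the paper's version is a two-line computation once one accepts that $\intk{\typevar_2}$ has derivative $-\ecv(\dumvar)/\dumvar^{\typevar_2}$ (which holds only off the countable jump set of $\ecv$, so strictly one is invoking absolute continuity of $\intk{\typevar_2}$ and the vanishing boundary term at infinity), whereas your Fubini--Tonelli route sidesteps any differentiability or boundary-term discussion at the cost of one interchange of integrals, which you justify cleanly via non-negativity. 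Both are complete; yours is marginally more self-contained, the paper's marginally shorter.
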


\begin{proof}[Proof sketch]
  The result follows directly via integration by parts; 
  the details are provided in Appendix~\ref{sec:proof-lemma-integral-k-to-k+1}. 
\end{proof}

\noindent
Lemma~\ref{lem:integral-k-to-k+1} is non-asymptotic in nature.
However, when $\rcb$ holds with $b \in [0, \infty]$, 
it can be turned into an asymptotic statement, 
except in the degenerate case $b = 0$ and $\typevar_2 = 1$.
(The later condition prevents the asymptotics of the two terms on the right-hand side of \eqref{eq:lemma-integral-k-to-k+1} from cancelling out, see Appendix~\ref{sec:asymp-equiv-integration}.)
In all other cases, the leading term of $\intk{\typevar_2}$ determines the leading term of $\intk{\typevar_1}$, for $\typevar_1, \typevar_2 \geq 1$.

Particularizing to metric entropy,
we conclude that its leading term is determined by that of $\ecv$ and by that of $\intk{\typevar}$, for any $\typevar\geq 1$; 
the converse however does not hold.
The situation for the minimax risk is slightly more involved and requires to split the analysis into three regimes, depending on the value of $b$.
This is the content of the next theorem.

\begin{theorem}\label{thm:bias-variance-decomp}
  Let $b\in[0,\infty]$ and
  let $\mu=\semax$ be a sequence such that $\mu_n\to 0$ as $n\to\infty$,
  with corresponding semi-axis-counting function $\ecv$ satisfying  $\rcb$.
  Then, the minimax risk associated with the model $\gsm$ satisfies
  \begin{equation}\label{eq:bias-variance}
    R_{\sigma} \left(\ellip\right)
    \sim \newinf_{\varepsilon > 0} \left\{ \sigma^2 \, V_b(\varepsilon) + \varepsilon^2\right\}
    \sim \sigma^2 \, V_b(\varepsilon_\sigma) + \varepsilon_\sigma^2,
    \quad\text{as } \sigma\to 0, 
  \end{equation}
  where $\varepsilon_\sigma$ is the critical radius introduced in Theorem~\ref{thm:linear-risk-formula} and 
  \begin{equation*}
    V_b(\varepsilon) \coloneqq  
    \begin{dcases}
      \\[-.35cm]
         \ecv(\varepsilon), &\quad\text{if } b=0, \\[.5cm]
          \frac{2 \, b\, H\left(\varepsilon; \ellip\right)}{(b+1)(b+2)} , &\quad\text{if } b \in (0, \infty),\\[.225cm]
          \int_{\varepsilon}^{\infty}\frac{2H\left(\dumvar; \ellip\right)}{\dumvar}\diff \dumvar , & \quad\text{if } b=\infty, 
          \\[.175cm]
    \end{dcases}
    \quad \text{for all } \varepsilon>0.
  \end{equation*}
\end{theorem}

\begin{proof}[Proof sketch]
Upon application of Pinsker's asymptotic minimaxity  theorem,
we get that the linear minimax risk associated with the model $\mathcal{G}(\ellip, \sigma)$ is asymptotically optimal, that is,  $R_{\sigma} \left(\ellip\right) \sim  R_{\sigma}^L \left(\ellip\right)$, as $ \sigma \to 0$.
It is therefore sufficient to prove (\ref{eq:bias-variance}) for the linear minimax risk.
This is done by first applying Lemma~\ref{lem:integral-k-to-k+1} to establish a relation between the integrals $\intk{2}$ and $\intk{3}$ appearing in \eqref{eq:linear-risk-integral}, and then applying Theorem~\ref{thm:metric-entropy-integral-form}.
The detailed proof is provided in Appendix~\ref{sec:proof-lemma-RnL-vs-ME}.
\end{proof}

\noindent
Theorem~\ref{thm:bias-variance-decomp} splits into three regimes. 
When $b=0$, the leading term of the minimax risk is determined by the asymptotics of the semi-axis-counting function; see Lemma~\ref{lem:abelian-infimum} in Appendix~\ref{sec:asymp-equiv-integration}.
When $b\in (0, \infty)$, knowledge of the leading term of the minimax risk is equivalent to knowledge of the leading term of the metric entropy (and hence of $\intk{\typevar}$, for all $\typevar \geq 1$); see Lemmas~\ref{lem:abelian-infimum} and \ref{lem:tauberian-infimum} in Appendix~\ref{sec:asymp-equiv-integration}.
When $b=\infty$, the leading term of the minimax risk is determined by that of the metric entropy (and thus of  $\intk{\typevar}$, for all $\typevar\geq 1$), but the converse generally fails.
These dependencies are made explicit for the typical examples presented in Appendix~\ref{sec:table}.

To interpret \eqref{eq:bias-variance},
we focus on $b \in (0, \infty)$ (the discussion extends to the other regimes).
By Theorem~\ref{thm:bias-variance-decomp},
the minimax risk admits the asymptotic decomposition
\begin{equation}\label{eq:bias-variance-decomp}
R_{\sigma} \left(\ellip\right)
\sim
\frac{2 \,  b \,   \sigma^2}{(b+1)(b+2)} H \left(\varepsilon_\sigma ; \ellip \right)
+ \varepsilon^2_\sigma,
\quad \text{as } \sigma \to 0.
\end{equation}
This decomposition is an incarnation of the bias--variance tradeoff from statistical learning theory (see, e.g., \cite[Chapter 5]{understandingML2014}): 
smaller values of $\varepsilon_\sigma$ reduce the bias term $\varepsilon^2_\sigma$, but at the expense of increasing the variance---or complexity---term captured by the metric entropy. Conversely, larger values of $\varepsilon_\sigma$ increase the bias while reducing the variance.
Theorem~\ref{thm:bias-variance-decomp} shows that the leading term of the minimax risk is obtained by optimizing this tradeoff; $\varepsilon_\sigma$ achieves the optimum, hence the terminology `critical radius'.

We conclude with a short discussion on related work.
Characterizing the performance of statistical estimators via metric-entropy methods is a long-standing idea, 
dating back at least to the
works of LeCam and Birgé \cite{birge1983approximation,10.1214/aos/1193342380};
see also \cite{vandervaartWeakConvergenceEmpirical1996,geer2000empirical,GineNickl2021}.
For minimax risk over ellipsoids, Donoho \cite{donohoCountingBitsKolmogorov2000} highlighted a strong parallel between minimax techniques and metric-entropy methods. More recently, \cite{zhu2018quantized} studied the minimax risk as an optimization problem under storage or communication constraints captured by metric entropy. 
A bias-variance tradeoff akin to \eqref{eq:bias-variance-decomp}---i.e., where an $\varepsilon^2$-term competes against a metric-entropy term---already appears in \cite{yang1999information} to derive lower bounds on the minimax risk in a broader setting than ellipsoids.

\section{Sobolev Ellipsoids and Pinsker's Theorem}\label{sec:appli-Sobolev}

Sections~\ref{sec:metric-entropy}--\ref{sec:metric-entropy-vs-minimax-risk} 
establish that the type-$\typevar$ integrals $\intk{\typevar}$ (for $\typevar\geq 1$)
quantify the compactness of ellipsoids 
as they directly relate to metric entropy and minimax risk. 
Section~\ref{sec:metric-entropy-vs-minimax-risk} and Appendix~\ref{sec:asymp-equiv-integration} further show that the asymptotics of $\intk{\typevar}$ are determined by those of $\ecv$, 
which might suggest that $\ecv$ is an equally suitable measure of compactness.
We claim otherwise: the semi-axis-counting function carries, in a sense, too much information.
To support our claim, we particularize our analysis to the Sobolev ellipsoid and show that, in this case, working directly with type-$\typevar$ integrals yields stronger results than methods based on the asymptotics of $\ecv$.
Additionally, the Sobolev ellipsoid provides a natural test-case for the results of the paper
given the extensive literature on its metric entropy \cite{edmunds1979embeddings,donohoCountingBitsKolmogorov2000,luschgyFunctionalQuantizationGaussian2002,luschgySharpAsymptoticsKolmogorov2004,ALLARD2025101762} 
and its minimax risk (mainly via Pinsker's Sobolev theorem \cite{pinsker1980optimal,nussbaum1999minimax}).
The theory laid out in Sections~\ref{sec:metric-entropy}--\ref{sec:metric-entropy-vs-minimax-risk}  improves on all these classical characterizations.

Specifically, for $k,d\in\Ns$, we consider the Sobolev space of order $k$ on a bounded open domain $\Omega\subset \R^d$ (see \cite{adamsSobolevSpaces2003, brezisFunctionalAnalysisSobolev2011}). 
Informally, this space corresponds to the class of square-integrable functions on $\Omega$---i.e., elements of $L^2(\Omega)$---whose $k$-th (weak)  derivatives also lie in $L^2(\Omega)$.
The Sobolev ellipsoid $\mathcal{E}^{\text{Sob}}_{d, k}$ is the unit ball of this space. 
Equivalently,  $\mathcal{E}^{\text{Sob}}_{d, k}$  
is the image of the unit ball of $L^2(\Omega)$ under the compact linear operator
\begin{equation}\label{eq:operator-T}
  T= \left(\text{Id}+ (-\Delta)^k\right)^{-\frac{1}{2}},
\end{equation}
where $-\Delta$ denotes the Dirichlet Laplacian on $\Omega$;
see the discussion in \cite[Section~3.2]{ALLARD2025101762}.
This makes the terminology `Sobolev ellipsoid' transparent:
$\mathcal{E}^{\text{Sob}}_{d, k}$ is isometric to the ellipsoid in $\ell^2(\Ns)$ with semi-axes $\semax$ given by the eigenvalues of the operator $T$.

To apply the results of the previous sections,
we need information on the asymptotic properties of $\semax$.  
By \eqref{eq:operator-T},
this is equivalent to an asymptotic characterization of the eigenvalues $\{\lambda_n\}_{n\in\Ns}$ of $-\Delta$.
On the template of \eqref{eq:definition-ecf}, we thus introduce the eigenvalue-counting function of the Laplacian
\begin{equation*}
    M_\lambda (s) \coloneqq 
    \left| \left\{n \mid \lambda_n \leq s \right\}\right|,
    \quad \text{for all } s>0.
\end{equation*}
Note the fundamental difference with $\ecv$: 
$M_\lambda(s)$ counts the eigenvalues of $-\Delta$ below a threshold $s$, 
while $\ecv(\varepsilon)$ counts the semi-axes of $\mathcal{E}^{\text{Sob}}_{d, k}$ above a threshold $\varepsilon$.
This difference reflects that the eigenvalues of the Laplacian tend to infinity \cite[Chapter~9.5]{shubinInvitationPartialDifferential2020}, 
while the semi-axes of the ellipsoid tend to zero.
Accordingly, we are interested in the asymptotics of $M_\lambda(s)$ in the limit $s\to\infty$.
The leading term was first characterized by Weyl in \cite{weylUeberAsymptotischeVerteilung1911,weyl1912asymptotische,weyl1913randwertaufgabe} (see also \cite[Chapter~9.5]{shubinInvitationPartialDifferential2020}) according to 
\begin{equation}\label{eq:Weyl-law-one-term}
  M_{\lambda} (s) 
  \sim \frac{\omega_d \mathcal{H}^d(\Omega)}{(2\pi)^d} \, s^{\frac{d}{2}},
  \quad \text{as } s\to\infty,
\end{equation}
where $\omega_d$ is the volume of the unit ball in $\mathbb{R}^d$ and $\mathcal{H}^d(\cdot)$ denotes the $d$-dimensional Hausdorff measure.

A second-order term can also be obtained, typically under the additional assumptions
\begin{equation}\label{eq:billiard-condition}
  \text{$\Omega$ has smooth boundary $\partial\Omega$ and the measure of its periodic billiards is zero.}
\end{equation}
The domain $\Omega$ has smooth boundary if it is locally the graph of a smooth---i.e., infinitely differentiable---function; see  \cite[Chapter~4]{adamsSobolevSpaces2003} and \cite[Appendix~C]{evans2022partial}.
Informally, a billiard 
is 
the data of an initial point and an initial direction, generating a trajectory in $\Omega$ which is said to be periodic if it loops back onto itself
; we refer to \cite[Chapter 1.3]{safarov1997asymptotic} for a formal introduction to billiards.
Under \eqref{eq:billiard-condition}, V. Ivrii established in \cite{ivriiSecondTermSpectral1980} (see also \cite[Corollary 29.3.4]{hormanderAnalysisLinearPartial2009} and the survey \cite{Ivrii_2016}) the two-term expansion
\begin{equation}\label{eq:Weyl-two-terms-ivrii}
  M_{\lambda} (s) 
  = d \, \chi_d(\Omega) \, s^{\frac{d}{2}}  
   - \frac{(d-1) \chi_{d-1}(\partial \Omega)}{4} \, s^{\frac{d-1}{2}} 
  + o_{s \to \infty} \left(s^{\frac{d-1}{2}} \right),
\end{equation}
where $\chi_r (S) 
\coloneqq \omega_r  \mathcal{H}^r(S) /(r \, (2\pi)^r) $, for all $r\in\{1, \dots, d\}$ and measurable $S\subset \mathbb{R}^d$.
In \cite[p. 1001]{frank2024riesz}, 
the authors report that Bronstein--Ivrii claim \eqref{eq:Weyl-two-terms-ivrii} holds under a  $C^1$-Dini smoothness assumption on $\partial\Omega$, slightly weaker than \eqref{eq:billiard-condition}, 
still under the periodic-billiard condition.
Combining \eqref{eq:operator-T} and \eqref{eq:Weyl-two-terms-ivrii},
one can show
\begin{equation}\label{eq:ecv-Sobb-order-2}
  \ecv(\varepsilon)
  = d\, \chi_d(\Omega) \, {\varepsilon}^{-\frac{d}{k}}  
  - \frac{(d-1)\, \chi_{d-1}(\partial \Omega)}{4}\, 
  \varepsilon^{-\frac{d-1}{k}} 
  + o_{\varepsilon \to 0}\left(\varepsilon^{-\frac{d-1}{k}}\right),
\end{equation}
see \cite[Proof of Theorem~6]{ALLARD2025101762} for an explicit derivation.

There exists a corresponding literature that characterizes two-term asymptotics of the Laplacian eigenvalues `on average';
see, e.g., \cite{Frank_2011,geisingerSemiclassicalLimitDirichlet2011,Frank_2019,larson2019asymptotic,frankErrorTwotermWeyl2020,frank2024riesz}.
In particular, \cite[Theorem~1.1]{frank2024riesz} shows that, if $d\geq 2$ and $\Omega$ is a bounded open subset of $\mathbb{R}^d$ with Lipschitz boundary (i.e., it is locally the graph of a Lipschitz function), the Riesz means of the eigenvalues $\{\lambda_n\}_{n\in\Ns}$ satisfy
\begin{equation}\label{eq:asymptotics-Laplace-story}
    \sum_{n\in\Ns} \left(1-h^2\lambda_n\right)_+
    = \frac{2 \,  d \, \chi_d(\Omega)}{(d+2)} h^{-d} 
    - \frac{(d-1) \chi_{d-1}(\partial \Omega)}{2(d+1)} h^{-(d-1)}
    +o_{h\to 0}\left(h^{-(d-1)}\right).
\end{equation}
For background on Riesz means, we refer to \cite[Remark~18.3]{korevaar2004tauberian} and the references therein.
Just as the two-term asymptotics \eqref{eq:Weyl-two-terms-ivrii} for $M_{\lambda}$ are equivalent to the two-term asymptotics \eqref{eq:ecv-Sobb-order-2} for $\ecv$,
we  will see in the proof of Theorem~\ref{thm:Metric-Entropy-Sobolev} (cf. \eqref{eq:asymptotics-integral-12k}) that \eqref{eq:asymptotics-Laplace-story} is equivalent to a two-term expansion for the type-$(1+2/k)$ integral $\intk{1+2/k}$ associated with $\mathcal{E}^{\text{Sob}}_{d, k}$.
The advantage of \eqref{eq:asymptotics-Laplace-story} over \eqref{eq:Weyl-two-terms-ivrii}---when one is satisfied with information on $\intk{1+2/k}$ instead of $\ecv$---is that \eqref{eq:asymptotics-Laplace-story} requires only 
$\Omega$ to have Lipschitz boundary, rather than smooth boundary with a periodic-billiard condition as in \eqref{eq:billiard-condition}.
This observation is particularly relevant for our analysis since, as discussed in Section~\ref{sec:metric-entropy-vs-minimax-risk}, asymptotics for $\intk{1+2/k}$ imply asymptotics for $\intk{\typevar}$ for all $\typevar \geq 1$, and hence for metric entropy and minimax risk.

To make our discussion concrete, let us first consider metric entropy.
It was shown in \cite[Theorem~5]{ALLARD2025101762} that the metric entropy of the Sobolev ellipsoid scales to the first order according to
\begin{equation}\label{eq:metric-entropy-sob-first-order}
  H \left(\varepsilon; \mathcal{E}^{\text{Sob}}_{d, k}\right) 
  \sim k\, \chi_d(\Omega) \, {\varepsilon}^{-\frac{d}{k}},
  \quad \text{as } \varepsilon \to 0,
\end{equation}
and, when $d\geq 3$ and under the additional assumption \eqref{eq:billiard-condition},
to the second order according to
\begin{equation}\label{eq:Sobolev-ME-order-two-ACHA}
  H \left(\varepsilon; \mathcal{E}^{\text{Sob}}_{d, k}\right) 
  =k\, \chi_d(\Omega) \, {\varepsilon}^{-\frac{d}{k}}  
  - \frac{k\, \chi_{d-1}(\partial \Omega)}{4}\, 
  \varepsilon^{-\frac{d-1}{k}} 
  + o_{\varepsilon \to 0}\left(\varepsilon^{-\frac{d-1}{k}}\right).
\end{equation}
The asymptotic expansion \eqref{eq:Sobolev-ME-order-two-ACHA} was derived from \eqref{eq:Weyl-two-terms-ivrii}.
Building on \eqref{eq:asymptotics-Laplace-story} and Theorem~\ref{thm:metric-entropy-integral-form}, 
the next theorem shows that \eqref{eq:Sobolev-ME-order-two-ACHA} can in fact be recovered without  \eqref{eq:billiard-condition}, assuming only that $\Omega$ has Lipschitz boundary.

\begin{theorem}\label{thm:Metric-Entropy-Sobolev}
  Let $k \in \mathbb{N}^*$, let $d\geq 3$ be an integer, and let $\Omega$ be a bounded open subset of $\mathbb{R}^d$ with Lipschitz boundary.
  Then, 
  \begin{equation*}
  H \left(\varepsilon; \mathcal{E}^{\text{Sob}}_{d, k}\right) 
  =k\, \chi_d(\Omega) \, {\varepsilon}^{-\frac{d}{k}}  
  - \frac{k\, \chi_{d-1}(\partial \Omega)}{4}\, 
  \varepsilon^{-\frac{d-1}{k}} 
  + o_{\varepsilon \to 0}\left(\varepsilon^{-\frac{d-1}{k}}\right),
  \end{equation*}
  where $\chi_r (S) = \omega_r  \mathcal{H}^r(S) /(r \, (2\pi)^r) $, for all $r\in\{1, \dots, d\}$ and measurable $S\subset \mathbb{R}^d$, 
  with $\omega_r$ the volume of the unit ball in $\mathbb{R}^r$ and $\mathcal{H}^r(\cdot)$ the $r$-dimensional Hausdorff measure.
\end{theorem}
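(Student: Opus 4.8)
The plan is to pin down the two-term asymptotics of the type-$1$ integral $\intk{1}$ associated with $\mathcal{E}^{\textnormal{Sob}}_{d,k}$ and then feed it into Theorem~\ref{thm:metric-entropy-integral-form}; the crucial point is that I reach $\intk{1}$ \emph{through} the type-$(1+2/k)$ integral, so that only the averaged Weyl law \eqref{eq:asymptotics-Laplace-story} (valid on Lipschitz domains) is used, never \eqref{eq:ecv-Sobb-order-2} (which needs the stronger condition \eqref{eq:billiard-condition}). By \eqref{eq:operator-T}, $\mathcal{E}^{\textnormal{Sob}}_{d,k}$ is isometric to the ellipsoid with semi-axes $\mu_n=(1+\lambda_n^k)^{-1/2}$, where $\{\lambda_n\}_{n\in\Ns}$ are the Dirichlet-Laplacian eigenvalues, so that $\ecv(\varepsilon)=M_\lambda\big((\varepsilon^{-2}-1)^{1/k}\big)$ and Weyl's law \eqref{eq:Weyl-law-one-term} gives $\ecv(\varepsilon)\sim d\,\chi_d(\Omega)\,\varepsilon^{-d/k}$ as $\varepsilon\to 0$. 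In particular $\ecv$ satisfies $\rcb$ with the finite value $b=d/k$ (take $f(x)=d\,\chi_d(\Omega)\,x^{-d/k}$, whose elasticity \eqref{eq:definition-elasticity} is the constant $d/k$), so Theorem~\ref{thm:metric-entropy-integral-form} applies and yields $H(\varepsilon;\mathcal{E}^{\textnormal{Sob}}_{d,k})=\intk{1}(\varepsilon)+O_{\varepsilon\to 0}\big(\varepsilon^{-d/(2k)}\ln(\varepsilon^{-1})\big)$, the $O$-term being the $\sqrt{\ecv(\varepsilon)\ln(\ecv(\varepsilon))\ln(\varepsilon^{-1})}$ branch of the minimum. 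Since $d\geq 3$, this remainder is $o_{\varepsilon\to 0}\big(\varepsilon^{-(d-1)/k}\big)$---this is exactly where the hypothesis $d\geq 3$ is used---so it suffices to prove $\intk{1}(\varepsilon)=k\,\chi_d(\Omega)\,\varepsilon^{-\frac{d}{k}}-\tfrac14 k\,\chi_{d-1}(\partial\Omega)\,\varepsilon^{-\frac{d-1}{k}}+o_{\varepsilon\to 0}(\varepsilon^{-\frac{d-1}{k}})$.

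For the key step I would first extract a two-term expansion of $\intk{1+2/k}$ from \eqref{eq:asymptotics-Laplace-story}. Swapping sum and integral in \eqref{eq:main-definition-integrals-order-k} (as in the derivation of \eqref{eq:water-filling-I1}) gives $\intk{\typevar}(\varepsilon)=\tfrac{\varepsilon^{-(\typevar-1)}}{\typevar-1}\sum_{n\in\Ns}\big(1-(\varepsilon/\mu_n)^{\typevar-1}\big)_+$ for $\typevar>1$; taking $\typevar=1+2/k$, writing $(\varepsilon/\mu_n)^{2/k}=h^2(1+\lambda_n^k)^{1/k}$ with $h\coloneqq\varepsilon^{1/k}$, this reads $\intk{1+2/k}(\varepsilon)=\tfrac{k}{2}h^{-2}\sum_{n\in\Ns}\big(1-h^2(1+\lambda_n^k)^{1/k}\big)_+$. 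Since $0\le (1+\lambda_n^k)^{1/k}-\lambda_n\le c_0$ for a finite $c_0=c_0(k,\lambda_1)$, the map $x\mapsto(1-x)_+$ is $1$-Lipschitz, and only $O(M_\lambda(h^{-2}))=O(h^{-d})$ summands are nonzero, replacing $(1+\lambda_n^k)^{1/k}$ by $\lambda_n$ perturbs the sum by $O(h^{-d})=o(h^{-(d+1)})$; inserting \eqref{eq:asymptotics-Laplace-story} into the resulting Riesz mean yields
\begin{equation*}
  \intk{1+2/k}(\varepsilon)=\frac{k\,d\,\chi_d(\Omega)}{d+2}\,\varepsilon^{-\frac{d+2}{k}}-\frac{k\,(d-1)\,\chi_{d-1}(\partial\Omega)}{4(d+1)}\,\varepsilon^{-\frac{d+1}{k}}+o_{\varepsilon\to 0}\big(\varepsilon^{-\frac{d+1}{k}}\big).
\end{equation*}

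Then I would transfer this to $\intk{1}$ via Lemma~\ref{lem:integral-k-to-k+1} with $\typevar_1=1$ and $\typevar_2=1+2/k$, which gives $\intk{1}(\varepsilon)=\varepsilon^{2/k}\intk{1+2/k}(\varepsilon)+\tfrac{2}{k}\int_{\varepsilon}^{\infty}u^{2/k-1}\intk{1+2/k}(u)\diff u$. Substituting the expansion of the previous paragraph into both terms---integrating the two power terms over $(\varepsilon,\delta)$ for a fixed small $\delta$, absorbing $\int_\delta^\infty$ into an $O(1)$ since $\intk{1+2/k}$ vanishes for arguments $\geq\mu_*$, and noting that the $o(u^{-(d+1)/k})$ remainder integrates against $u^{2/k-1}$ to $o(\varepsilon^{-(d-1)/k})$ by the preservation of asymptotics under integration (Lemma~\ref{lem:abelian-integration}, using $d>1$)---the $\varepsilon^{-d/k}$-coefficients combine to $\tfrac{kd\chi_d(\Omega)}{d+2}+\tfrac{2k\chi_d(\Omega)}{d+2}=k\chi_d(\Omega)$ and the $\varepsilon^{-(d-1)/k}$-coefficients to $-\tfrac{k(d-1)\chi_{d-1}(\partial\Omega)}{4(d+1)}-\tfrac{k\chi_{d-1}(\partial\Omega)}{2(d+1)}=-\tfrac14 k\chi_{d-1}(\partial\Omega)$, which is precisely the desired expansion for $\intk{1}$. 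Combined with the first paragraph, this proves the theorem.

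The hard part is the middle step: one must carefully show that the mismatch between the Sobolev quantity $\mu_n^{-2/k}=(1+\lambda_n^k)^{1/k}$ and the bare eigenvalue $\lambda_n$ shifts the Riesz mean only at order $O(h^{-d})$, i.e.\ strictly below the second-order scale $\varepsilon^{-(d+1)/k}$, so that the boundary term of \eqref{eq:asymptotics-Laplace-story}---which, unlike that of \eqref{eq:Weyl-two-terms-ivrii}, survives on merely Lipschitz domains---is faithfully inherited by $\intk{1+2/k}$, hence by $\intk{1}$ and by the metric entropy. Everything else is bookkeeping: verifying $\rcb$, bounding the remainder of Theorem~\ref{thm:metric-entropy-integral-form}, and the final Abelian integration.
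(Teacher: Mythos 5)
Your proposal is correct and follows the same overall architecture as the paper's proof: extract a two-term expansion of $\intk{1+\frac{2}{k}}$ from the averaged Weyl law \eqref{eq:asymptotics-Laplace-story} (so that only a Lipschitz boundary is needed, never \eqref{eq:billiard-condition}), transfer it to $\intk{1}$ via Lemma~\ref{lem:integral-k-to-k+1} (your coefficient arithmetic agrees with \eqref{eq:asymptotics-I1-0}), and conclude with Theorem~\ref{thm:metric-entropy-integral-form}, invoking $d\geq 3$ exactly as the paper does to make the $\sqrt{\ecv(\varepsilon)\ln(\ecv(\varepsilon))\ln(\varepsilon^{-1})}$ remainder $o_{\varepsilon\to 0}(\varepsilon^{-\frac{d-1}{k}})$. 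Where you genuinely differ is the mechanism identifying the Riesz means with $\intk{1+\frac{2}{k}}$: the paper converts the Riesz mean, via Stieltjes integration by parts and the change of variables $\dumvar=\varphi(s)$, into the exact integral $\int_{\varepsilon}^{\infty}\ecv(\dumvar)\,\dumvar^{-3}(\dumvar^{-2}-1)^{-\frac{k-1}{k}}\diff\dumvar$, then Taylor-expands the weight and absorbs the correction using Karamata's theorem and the $\varphi^{-1}(\varepsilon)=\varepsilon^{-2/k}(1+O(\varepsilon^2))$ expansion (see \eqref{eq:sum-to-integral-Laplace-3}--\eqref{eq:integral-many-terms-to-simple-integral}); you instead use the closed-form series $\intk{\typevar}(\varepsilon)=\tfrac{\varepsilon^{1-\typevar}}{\typevar-1}\sum_{n}\bigl(1-(\varepsilon/\mu_n)^{\typevar-1}\bigr)_+$ and compare the resulting Riesz-type mean of $(1+\lambda_n^k)^{1/k}$ with that of $\lambda_n$ by an elementary Lipschitz/counting argument. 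Your route is more elementary at this step (no Stieltjes manipulation, no Karamata, and the two separate corrections of the paper are handled in one stroke), while the paper's derivation keeps an exact identity up to the Taylor step.

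One bookkeeping correction in your key estimate: as literally written, ``perturbs the sum by $O(h^{-d})$'' is too weak, and $o(h^{-(d+1)})$ is the wrong comparison scale for the sum itself---after the prefactor $\tfrac{k}{2}h^{-2}$, an error of size $h^{-d}$ in the sum would contaminate $\intk{1+\frac{2}{k}}$ at order $h^{-(d+2)}$, i.e.\ at leading order; the sum's perturbation must be $o(h^{-(d-1)})$. The ingredients you list do give this: each of the $O(M_\lambda(h^{-2}))=O(h^{-d})$ active summands is shifted by at most $h^2\bigl((1+\lambda_n^k)^{1/k}-\lambda_n\bigr)\leq c_0\,h^2$ thanks to the $1$-Lipschitz property of $x\mapsto(1-x)_+$, so the sum is perturbed by $O(h^{2-d})$, hence $\intk{1+\frac{2}{k}}$ by $O(h^{-d})=o(h^{-(d+1)})=o(\varepsilon^{-\frac{d+1}{k}})$, safely below the boundary-term scale. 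With this sharpened statement (presumably what you intended, since you invoke the Lipschitz property), your proof goes through.
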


\begin{proof}[Proof sketch]
  Theorem~\ref{thm:Metric-Entropy-Sobolev} is obtained by first deriving the asymptotic behavior of the type-$1$ integral $\intk{1}$ from \eqref{eq:asymptotics-Laplace-story}, and then applying Theorem~\ref{thm:metric-entropy-integral-form}.
  The details are provided in Appendix~\ref{sec:proof-theorem-Metric-Entropy-Sobolev}.
\end{proof}

\noindent
Our next application concerns the minimax risk for Sobolev ellipsoids in the Gaussian sequence model.
For $d=1$ and $\Omega=(0,1)$,
Pinsker proved in \cite{pinsker1980optimal} that the minimax risk associated with the model $\mathcal{G}(\mathcal{E}^{\text{Sob}}_{1, k}, \sigma)$ scales according to
\begin{equation}\label{eq:Pinsker-original-version}
  R_{\sigma} \left(\mathcal{E}^{\text{Sob}}_{1, k}\right)
  \sim P_k \,  \sigma^{\frac{4k}{2k+1}},
  \quad \text{as } \sigma\to 0, \text{ where }
  P_k \coloneqq \left(2k+1\right)^{\frac{1}{2k+1}}\left(\frac{k}{\pi(k+1)}\right)^{\frac{2k}{2k+1}},
\end{equation}
is the Sobolev Pinsker constant. 
Theorem~\ref{thm:pinsker-sobolev-order-two} extends \eqref{eq:Pinsker-original-version} in two ways: 
(i) to general open bounded domains $\Omega\subset\R^d$ in \eqref{eq:Pinsker-order-1}, and 
(ii) by providing the second-order term when $d\geq 3$ and $\Omega$ has Lipschitz boundary in \eqref{eq:Pinsker-order-2}.
Again, the assumptions in \eqref{eq:billiard-condition} are not required.

\begin{theorem}\label{thm:pinsker-sobolev-order-two}
  Let $k \in \Ns$, $d\in\Ns$, and let $\Omega$ be a bounded open subset of $\mathbb{R}^d$.
  Then, 
  \begin{equation}\label{eq:Pinsker-order-1}
    R_{\sigma} \left(\mathcal{E}^{\text{Sob}}_{d, k}\right)
    \sim \frac{d+2k}{d} \left(\frac{k \,  d^2 \,  \chi_d(\Omega)  \, \sigma^2}{(d+k)(d+2k)}\right)^{\frac{2k}{d+2k}},
    \quad\text{as } \sigma\to 0,
  \end{equation}
  where $\chi_r (S) = \omega_r  \mathcal{H}^r(S) /(r \, (2\pi)^r) $, for all $r\in\{1, \dots, d\}$ and measurable $S\subset \mathbb{R}^d$, 
  with $\omega_r$ the volume of the unit ball in $\mathbb{R}^r$ and $\mathcal{H}^r(\cdot)$ the $r$-dimensional Hausdorff measure.

  Additionally, if $d\geq 3$ and $\Omega$ has Lipschitz boundary, then
  \begin{equation}\label{eq:Pinsker-order-2}
    R_{\sigma} \left(\mathcal{E}^{\text{Sob}}_{d, k}\right)
    = K_1 \left(\kappa\sigma^2\right)^{\frac{2k}{d+2k}}
    + K_2 \left(\kappa\sigma^2\right)^{\frac{2k+1}{d+2k}} + o_{\sigma\to 0} \left(\sigma^{\frac{4k+2}{d+2k}} \right),
  \end{equation}
  where 
  \begin{equation*}
    \kappa 
    = \frac{k \,  d^2 \,  \chi_d(\Omega)}{(d+k)(d+2k)},
    \quad 
    K_1
    = \frac{d+2k}{d},
    \quadtext{and}
    K_2 
    = - \frac{k(d-1)(d+k)(d+2k)\chi_{d-1}(\partial\Omega)}{2 d^2 (d+k-1)(d+2k-1)\chi_d(\Omega)}.
  \end{equation*}
\end{theorem}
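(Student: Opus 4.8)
The plan is to derive both \eqref{eq:Pinsker-order-1} and \eqref{eq:Pinsker-order-2} by combining the closed-form expression for the linear minimax risk from Theorem~\ref{thm:linear-risk-formula}, the transfer to the non-linear risk via Pinsker's asymptotic minimaxity theorem (sharpened, where needed, by Theorem~\ref{thm:risk-linear-2-nonlinear}), and the known asymptotics of the Laplacian spectral data. For the first-order statement \eqref{eq:Pinsker-order-1}, I would start from the one-term Weyl law \eqref{eq:Weyl-law-one-term}, pass through \eqref{eq:operator-T} to obtain $\ecv(\varepsilon) \sim d\,\chi_d(\Omega)\,\varepsilon^{-d/k}$ as $\varepsilon\to 0$ (i.e.\ the leading term of \eqref{eq:ecv-Sobb-order-2}), and then invoke the regularly varying case of Theorem~\ref{thm:linear-risk-formula}: this is exactly the situation treated in \eqref{eq:I2-I3-reg-var-asymptotics}--\eqref{eq:linear-risk-reg-var} with $\alpha = d/k$ and $c = d\,\chi_d(\Omega)$. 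Substituting these values into the Pinsker-constant formula \eqref{eq:linear-risk-reg-var} and simplifying the exponents (using $2/(\alpha+2) = 2k/(d+2k)$ and $\alpha/(\alpha+2) = d/(d+2k)$) yields the claimed constant; Pinsker's asymptotic minimaxity theorem then upgrades the linear risk to the non-linear risk $R_\sigma(\mathcal{E}^{\text{Sob}}_{d,k})$. A sanity check against \eqref{eq:Pinsker-original-version} for $d=1$, $\Omega=(0,1)$ (where $\chi_1((0,1)) = \omega_1/(1\cdot 2\pi) = 2/(2\pi) = 1/\pi$) should reproduce $P_k$.

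For the second-order expansion \eqref{eq:Pinsker-order-2}, the input is the two-term Laplacian asymptotics. Here the key point emphasized in the surrounding text is that one should \emph{not} route through the pointwise expansion \eqref{eq:ecv-Sobb-order-2} (which needs the billiard condition \eqref{eq:billiard-condition}), but rather through the averaged expansion \eqref{eq:asymptotics-Laplace-story}, valid under only a Lipschitz boundary hypothesis. I would rewrite the Riesz-mean sum $\sum_n (1-h^2\lambda_n)_+$ in terms of an integral of the semi-axis-counting function of $\mathcal{E}^{\text{Sob}}_{d,k}$ — concretely, a change of variables using \eqref{eq:operator-T} identifies $\sum_n (1-h^2\lambda_n)_+$ with (a constant multiple of) $\varepsilon^2$ times the type-$(1+2/k)$ integral $\intk{1+2/k}(\varepsilon)$ evaluated at $\varepsilon = \varepsilon(h)$, as the excerpt announces via \eqref{eq:asymptotics-integral-12k}. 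This converts \eqref{eq:asymptotics-Laplace-story} into a two-term asymptotic expansion for $\intk{1+2/k}(\varepsilon)$ as $\varepsilon\to 0$, with leading term $\propto \varepsilon^{-d/k}$ and correction $\propto \varepsilon^{-(d-1)/k}$, and with explicit constants coming from the right-hand side of \eqref{eq:asymptotics-Laplace-story}. By Lemma~\ref{lem:integral-k-to-k+1} (applied in the regularly varying regime $b = d/k \in (0,\infty)$, where it becomes an asymptotic statement), this two-term expansion transfers to two-term expansions of $\intk{2}$ and $\intk{3}$, which are precisely the objects feeding Theorem~\ref{thm:linear-risk-formula}.

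With two-term expansions for $\intk{2}$ and $\intk{3}$ in hand, I would solve the defining equation \eqref{eq:definition-critical-radius} for the critical radius $\varepsilon_\sigma$ to second order — this is an asymptotic inversion: the leading term is \eqref{eq:critical-radius-asymp-reg-var} with $\alpha = d/k$, and the correction is obtained by a one-step perturbation (writing $\varepsilon_\sigma = \varepsilon_\sigma^{(0)}(1 + \delta_\sigma)$ and linearizing). Plugging the resulting two-term $\varepsilon_\sigma$ back into \eqref{eq:linear-risk-closed-form}, $R_\sigma^L = \sigma^2\varepsilon_\sigma\intk{2}(\varepsilon_\sigma)$, and carefully keeping the cross-terms between the $\varepsilon_\sigma$-correction and the $\intk{2}$-correction, gives the two-term expansion of the linear minimax risk with constants $K_1$, $K_2$ as stated. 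The final step is to transfer to the non-linear risk: Pinsker's theorem alone only gives the leading term, so I would use the quantitative bound \eqref{eq:asymp-linear-to-nonlinear-risk} and check that the $O_{\sigma\to 0}(\sqrt{\ln v_\sigma}/v_\sigma)$ remainder is $o_{\sigma\to 0}(\sigma^{(4k+2)/(d+2k)})$; since $\ecv \sim c_1\varepsilon^{-\alpha_1}$ with $\alpha_1 = d/k$ gives $v_\sigma \stackrel{K}{\sim} \sigma^{-(\alpha_1+4)/(\alpha_1+2)}$, the condition $2\alpha_2 > \alpha_1$ with $\alpha_2 = (d-1)/k$ — i.e.\ $2(d-1) > d$, equivalently $d > 2$, which holds since $d \ge 3$ — ensures the remainder is absorbed, exactly as in the worked example following \eqref{eq:asymp-linear-to-nonlinear-risk}.

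\textbf{Main obstacle.} I expect the bookkeeping in the asymptotic inversion of \eqref{eq:definition-critical-radius} and the subsequent substitution into \eqref{eq:linear-risk-closed-form} to be the delicate part: the second-order constant $K_2$ is a nontrivial rational combination of $d$, $k$, $\chi_d(\Omega)$, and $\chi_{d-1}(\partial\Omega)$, and it receives contributions both from the correction term in $\intk{2}$ and from the correction to $\varepsilon_\sigma$ (which itself depends on corrections in both $\intk{2}$ and $\intk{3}$), so the cancellations and the precise coefficient must be tracked with care. A secondary subtlety is verifying the claimed equivalence \eqref{eq:asymptotics-integral-12k} between the Riesz-mean asymptotics \eqref{eq:asymptotics-Laplace-story} and the two-term expansion of $\intk{1+2/k}$ — i.e.\ getting the constants right in the change of variables linking $\sum_n(1-h^2\lambda_n)_+$ to $\varepsilon^2\intk{1+2/k}(\varepsilon)$ — but this is a routine computation once the substitution $\varepsilon = (1+h^{-2})^{-\cdots}$ coming from \eqref{eq:operator-T} is set up correctly.
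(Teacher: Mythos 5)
Your proposal is correct, and for the second-order statement \eqref{eq:Pinsker-order-2} it coincides with the paper's proof step by step: you convert \eqref{eq:asymptotics-Laplace-story} into the two-term expansion \eqref{eq:asymptotics-integral-12k} of $\intk{1+2/k}$, transfer it to $\intk{2}$ and $\intk{3}$ via Lemma~\ref{lem:integral-k-to-k+1}, invert \eqref{eq:definition-critical-radius} to second order, substitute into \eqref{eq:linear-risk-closed-form} while tracking the cross-terms, and finally absorb the remainder of \eqref{eq:asymp-linear-to-nonlinear-risk} using $v_\sigma \stackrel{K}{\sim}\sigma^{-(d+4k)/(d+2k)}$; your check $2\alpha_2>\alpha_1\iff d>2$ is exactly the role played by the hypothesis $d\geq 3$ in the paper. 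The only divergence is the first-order claim \eqref{eq:Pinsker-order-1}: you apply the Pinsker-constant formula \eqref{eq:linear-risk-reg-var} directly with $\alpha=d/k$ and $c=d\,\chi_d(\Omega)$ and then invoke Pinsker's asymptotic minimaxity, whereas the paper routes through the bias--variance formulation of Theorem~\ref{thm:bias-variance-decomp} combined with the metric-entropy asymptotics \eqref{eq:metric-entropy-sob-first-order} and minimizes the resulting functional explicitly. Both are applications of the paper's own machinery and give the same constant (your $d=1$, $\Omega=(0,1)$ sanity check against $P_k$ is right); your route is marginally more economical, the paper's makes the link to metric entropy explicit.

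Two small imprecisions, neither affecting the plan since you defer the constants to the computation: the substitution induced by \eqref{eq:operator-T} is $\varepsilon=\varphi(h^{-2})=(1+h^{-2k})^{-1/2}$, and it identifies $\sum_n(1-h^2\lambda_n)_+$ with $\tfrac{2}{k}\,\varepsilon^{2/k}\,\intk{1+2/k}(\varepsilon)$ up to a relative $O_{\varepsilon\to0}(\varepsilon^{2/k})$ correction (cf.\ \eqref{eq:sum-to-integral-Laplace-3} and \eqref{eq:integral-many-terms-to-simple-integral}), not $\varepsilon^{2}$ times the integral; and the passage from the two-term expansion of $\intk{1+2/k}$ to those of $\intk{2}$, $\intk{3}$ should use the exact identity \eqref{eq:lemma-integral-k-to-k+1} with the explicit power-law terms integrated termwise (as in \eqref{eq:sobolev-derivation-I2-1}--\eqref{eq:sobolev-derivation-I2-2}), since the purely asymptotic version of Lemma~\ref{lem:integral-k-to-k+1} only preserves leading terms.
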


\begin{proof}[Proof sketch]
  Theorem~\ref{thm:pinsker-sobolev-order-two} is obtained in three steps. First, we derive the asymptotic behavior of the integrals $\intk{2}$ and $\intk{3}$ from \eqref{eq:asymptotics-Laplace-story}.
  This allows us to determine the asymptotic behavior of the critical radius $\varepsilon_{\sigma}$, and thus of the linear minimax risk $R_{\sigma}^L (\mathcal{E}^{\text{Sob}}_{d, k})$, via the formulas \eqref{eq:definition-critical-radius} and \eqref{eq:linear-risk-closed-form} of Theorem~\ref{thm:linear-risk-formula}, respectively.
  A direct application of Theorem~\ref{thm:risk-linear-2-nonlinear} concludes the proof.
  The details are provided in Appendix~\ref{sec:proof-pinsker-sob-order-two}.
\end{proof}

\noindent
In summary, 
Theorems~\ref{thm:Metric-Entropy-Sobolev} and \ref{thm:pinsker-sobolev-order-two} 
improve on the best-known characterizations of the metric entropy and minimax risk for the Sobolev ellipsoid.
The proofs rely on the analysis of the corresponding type-$\typevar$ integrals.
One could reach similar results via the analysis of the semi-axis-counting function \eqref{eq:ecv-Sobb-order-2}, 
but only at the cost of imposing the additional assumptions  \eqref{eq:billiard-condition}.
This illustrates the advantage of working with averaged,  rather than pointwise, characterizations.

\bibliography{main}

\newpage 
\appendix

\section{Typical Asymptotics}\label{sec:table}

Theorems~\ref{thm:metric-entropy-integral-form} and \ref{thm:bias-variance-decomp} allow us to derive the asymptotics of the metric entropy and the minimax risk given that of the semi-axis-counting function.
The next table compiles the results for typical examples.
We fix $d \in \Ns$, $c, c_0>0$, $\alpha>0$, and $\beta\in\R$; the limits are taken either as $\varepsilon\to 0$, as $n\to\infty$, or as $\sigma\to 0$.

{\fontsize{9.5}{11.5}\selectfont
\begin{empheq}[box=\fcolorbox{eqframe}{white}]{alignat*=4}
  & (i) \quad  &&\ecv(\varepsilon) \to d 
  && \quadtext{is equivalent to} 
  && \semax \text{ having only $d$ non-zero elements,}\\
  & && && \quadtext{and to} 
  && H\left(\varepsilon; \ellip\right) \sim d\ln\left(\varepsilon^{-1}\right), \\ 
  & && && \quadtext{and to} 
  && R_{\sigma} \left(\ellip\right) \sim d \, \sigma^2; \\[.5cm]
  & (ii) \quad  && \ecv(\varepsilon) \sim c \ln\left(\varepsilon^{-1}\right) && \quadtext{is equivalent to} 
  && \ln\left(\mu_n\right) \sim - \frac{n}{c},\\
  & && && \quadtext{and implies} 
  && H\left(\varepsilon; \ellip\right) \sim \frac{c}{2}\ln^2\left(\varepsilon^{-1}\right), \\ 
  & && && \quadtext{and} 
  && R_{\sigma} \left(\ellip\right) \sim c\, \sigma^2\ln\left(\sigma^{-1}\right); \\[.5cm]
  & (iii) \quad  && \ecv(\varepsilon) \sim 
  \frac{c\ln\left(\varepsilon^{-1}\right)}{\ln^{(2)}\left(\varepsilon^{-1}\right)} 
  && \quadtext{is equivalent to} 
    && \ln\left(\mu_n\right) \sim - \frac{n \ln(n)}{c},\\
  & && && \quadtext{and implies} 
  && H\left(\varepsilon; \ellip\right) \sim 
  \frac{c\ln^2\left(\varepsilon^{-1}\right)}{2\ln^{(2)}\left(\varepsilon^{-1}\right)}, \\ 
  & && && \quadtext{and} 
  && R_{\sigma} \left(\ellip\right) \sim \frac{c\sigma^2\ln\left(\sigma^{-1}\right)}{\ln^{(2)}\left(\sigma^{-1}\right)}; \\[.5cm]
  & (iv) \quad  && \ecv(\varepsilon) \sim 
  c\, \varepsilon^{-\alpha}
  && \quadtext{is equivalent to} 
  && \mu_n \sim c^{\frac{1}{\alpha}}n^{-\frac{1}{\alpha}},\\
  & && && \quadtext{and to} 
  && H\left(\varepsilon; \ellip\right) \sim 
  \frac{c\, \varepsilon^{-\alpha}}{\alpha}, \\ 
  & && && \quadtext{and to} 
  && R_{\sigma} \left(\ellip\right) \sim 
  \frac{\alpha+2}{\alpha}\left(\frac{c\, \alpha\, \sigma^2}{(\alpha+1)(\alpha+2)}\right)^{\frac{2}{\alpha+2}}; \\[.5cm]
  & (v) \quad  && \ecv(\varepsilon) \sim 
  c\, \varepsilon^{-\alpha} \left(\ln\left(\varepsilon^{-1}\right)\right)^\beta
  && \quadtext{is equivalent to} 
  && \mu_n \sim \left(\frac{c \,  \left(\ln(n)\right)^{\beta}}{\alpha^{\beta}n}\right)^{\frac{1}{\alpha}},\\
  & && && \quadtext{and to} 
  && H\left(\varepsilon; \ellip\right) \sim 
  \frac{c\, \varepsilon^{-\alpha}}{\alpha}\left(\ln\left(\varepsilon^{-1}\right)\right)^\beta, \\ 
  & && && \quadtext{and to} 
  && R_{\sigma} \left(\ellip\right) \sim 
  \frac{\alpha+2}{\alpha }
  \left(\frac{c\,\alpha\,\sigma^2\left(\ln\left(\sigma^{-\frac{2}{\alpha+2}}\right)\right)^{\beta}}{(\alpha+1)(\alpha+2)}\right)^\frac{2}{\alpha+2}; \\[.5cm]
  & (vi) \quad  && \ecv(\varepsilon) \sim 
  c_0\exp\left(c\, \varepsilon^{-\alpha}\right)
  && \quadtext{is equivalent to} 
  && \exp\left(c\, \mu_n^{-\alpha}\right) \sim c_0^{-1}\, n,\\
  & && && \quadtext{and implies} 
  && H\left(\varepsilon; \ellip\right) \sim 
  \frac{c_0}{c\, \alpha}  \varepsilon^{\alpha}\exp(c\, \varepsilon^{-\alpha}), \\[.25cm]
  & && && \quadtext{which implies} 
  && R_{\sigma} \left(\ellip\right) \sim 
    \left(\frac{c}{\ln\left(\sigma^{-2}\right)}\right)^{\frac{2}{\alpha}}.
\end{empheq}
}

\noindent 
All the examples covered in the table above satisfy $\rcb$---with $b=0$ for items (i)--(iii),
$b=\alpha$ for items (iv)--(v),
and $b=\infty$
for item (vi).

Item (i) concerns finite-dimensional ellipsoids. 
Their metric entropy and minimax risk---and, more generally, those of arbitrary finite-dimensional compact sets---are classical; see, e.g., \cite[Chapter~5]{wainwrightHighDimensionalStatistics2019} and \cite[Chapter~2]{johnstone2019estimation}.
In this regime, $H(\varepsilon; \ellip) \sim d\ln(\varepsilon^{-1})$, as $\varepsilon\to 0$, heuristically means that $\ellip$ has $d$ degrees of freedom.
This heuristic was used in \cite[Theorem~5]{ALLARD2025101762} to derive a metric-entropy version of the sampling theorem by counting  degrees of freedom per unit of time for band-limited functions.

Ellipsoids with semi-axes as in items (ii) and (iii) (and, more generally, $\ell^p$-ellipsoids in $\ell^q(\Ns)$ with exponentially decaying semi-axes) typically model classes of analytic functions.
Such function classes include periodic functions analytic on a strip, functions bounded on a disk, and functions of exponential type.
Sharp asymptotics for their metric entropy were obtained in \cite{firstpaper}, and the leading term of their minimax risk up to a multiplicative constant in \cite{ibrag1998,ibragimov2013statistical}.

The regularly varying cases (iv)--(v) are the most extensively studied.
For (iv), both metric entropy \cite{donohoCountingBitsKolmogorov2000,luschgySharpAsymptoticsKolmogorov2004,ALLARD2025101762,secondpaper,grafSharpAsymptoticsMetric2004} and minimax risk \cite{pinsker1980optimal,nussbaum1999minimax} are sharply characterized.
This setting arises when considering unit balls in smoothness classes (Sobolev, H\"older, Besov), 
where the exponent is typically $\alpha=d/k$ with $k$ the smoothness index and $d$ the dimension of the domain.
In particular, both quantities depend exponentially on the dimension of the domain, a phenomenon known as the `curse of dimensionality'.
In this case, asymptotics on the semi-axis-counting function can be obtained via the Weyl law (see Section~\ref{sec:appli-Sobolev}) or, for general pseudo-differential operators, via \cite[Theorem~30.1]{shubinInvitationPartialDifferential2020}; see \cite{psidopaper} for an application.

Ellipsoids falling in the regime (vi) are less common.
An example is provided by the polynomially Lipschitz fading-memory (PLFM) systems introduced in \cite{pan2025metricentropylimitsapproximationnonlinear}, whose metric entropy grows exponentially.

\section{Proofs of Main Results}\label{sec:proofs-main-results}

\subsection{Proof of Theorem~\ref{thm:metric-entropy-integral-form}}\label{sec:proof-metric-entropy-integral-form}

\paragraph{A series formula for $\intk{1}$.}
We first prove that 
\begin{equation}\label{eq:sum-formua-I1}
  \intk{1} (\varepsilon)
  = \sum_{n \in \Ns} \ln_+ \left(\frac{\mu_n}{\varepsilon}\right),
  \quad \text{for all } \varepsilon >0,
\end{equation}
where we recall the notation $ \ln_+(x)=\max\{0, \ln(x)\}$, for all $x\geq 0$.
To this end, let us fix $A>0$ large enough to have $\ecv (A)=0$.
Such an $A$ is guaranteed to exist thanks to the assumption $\mu_n\to 0$ as $n\to\infty$.
The integral $\intk{1}$ can then be rewritten as the Stieltjes integral
\begin{equation}\label{eq:entropy-stieltjes-decomp}
  \intk{1} (\varepsilon)
  = \int_{\varepsilon}^\infty \frac{\ecv (\dumvar)}{\dumvar} \diff \dumvar
  = \int_{\varepsilon}^A \frac{\ecv (\dumvar)}{\dumvar} \diff \dumvar
  = \int_{\varepsilon}^A \ecv (\dumvar) \diff \ln (\dumvar),
  \quad \text{for all } \varepsilon >0.
\end{equation}
Upon application of the integration-by-parts formula for the Stieltjes integral (see, e.g., \cite[Theorem~7.6]{apostol1974analysis}), we obtain 
\begin{equation}\label{eq:ipp-stieltjes-entropy}
  \int_{\varepsilon}^A \ecv (\dumvar) \diff \ln (\dumvar)
  =\ecv (A)\ln(A) - \ecv (\varepsilon) \ln(\varepsilon) - \int_{\varepsilon}^A \ln (\dumvar) \diff \ecv  (\dumvar),
  \quad \text{for all } \varepsilon >0.
\end{equation}
Now, recall that $A$ has been chosen such that $\ecv (A)=0$, so that the first term in the right-hand side of \eqref{eq:ipp-stieltjes-entropy} vanishes.
Furthermore, since $\ecv $ is a step function with negative jumps located at each $\mu_n$, 
we can reduce the Stieltjes integral appearing in the right-hand side of \eqref{eq:ipp-stieltjes-entropy} to a finite sum (see, e.g., \cite[Theorem~7.11]{apostol1974analysis}) according to
\begin{equation}\label{eq:stiltjes-to-sum}
  - \int_{\varepsilon}^A \ln (\dumvar) \diff \ecv  (\dumvar)
  = \sum_{\mu_n\in [\varepsilon, A)} \ln \left(\mu_n\right),
  \quad \text{for all } \varepsilon >0.
\end{equation}
Note that, with our choice of $A$, the condition $\mu_n<A$ is automatically satisfied for all $n\in\Ns$.
Therefore, combining \eqref{eq:entropy-stieltjes-decomp}, \eqref{eq:ipp-stieltjes-entropy}, and \eqref{eq:stiltjes-to-sum}, and using that there are, by definition of $\ecv $, precisely $\ecv (\varepsilon)$ semi-axes satisfying $\mu_n \geq \varepsilon$,
we get 
\begin{equation*}
  \intk{1}(\varepsilon)
  = \sum_{\mu_n \geq \varepsilon} \ln \left(\mu_n\right)- \ecv (\varepsilon) \ln(\varepsilon)
  = \sum_{n \in \Ns} \ln_+ \left(\frac{\mu_n}{\varepsilon}\right),
  \quad \text{for all } \varepsilon >0,
\end{equation*}
which proves \eqref{eq:sum-formua-I1}.

In what follows, we may assume that at least two semi-axes are non-zero.
If all semi-axes are zero, $H(\varepsilon; \mathcal{E}_\mu) = \intk{1} (\varepsilon) = 0$, for all $\varepsilon>0$.
If there is a single non-zero semi-axis $\mu_*$,
\eqref{eq:sum-formua-I1} reduces to $\intk{1} (\varepsilon) =  \ln_+ (\mu_*/\varepsilon)$
and one easily verifies that $H(\varepsilon; \mathcal{E}_\mu) = \ln(\lceil\mu_*/\varepsilon\rceil)$, for all $\varepsilon>0$.
In both cases \eqref{eq:main-result-ME} and \eqref{eq:asymp-equivalence-ME-I1-first-order} hold trivially.

\paragraph{A lower bound on metric entropy.}
We next prove the following lower bound on metric entropy
\begin{equation}\label{eq:lower-bound-integral-ME}
  H\left(\varepsilon; \mathcal{E}_\mu\right)
  \geq \sum_{n \in \Ns} \ln_+ \left(\frac{\mu_n}{\varepsilon}\right),
  \quad \text{for all } \varepsilon > 0.
\end{equation}
To this end, let us fix $\varepsilon>0$ and
introduce the ellipsoid $ \bar{\mathcal{E}}_\mu$ obtained from $\mathcal{E}_\mu$ by retaining only the semi-axes greater than or equal to $\varepsilon$ and setting the remaining ones to zero.
$ \bar{\mathcal{E}}_\mu$ is therefore a finite-dimensional ellipsoid 
of dimension $\mathfrak{d}\coloneqq \ecv (\varepsilon)$.
By construction, we have the inclusion $ \bar{\mathcal{E}}_\mu \subseteq \mathcal{E}_\mu$, which, in terms of metric entropy, translates into 
\begin{equation}\label{eq:finite-infinite-dim-ME-comparison}
  H(\varepsilon; \mathcal{E}_\mu)
  \geq H(\varepsilon; \bar{\mathcal{E}}_\mu).
\end{equation}
We have reduced the problem of lower-bounding the metric entropy of $\mathcal{E}_\mu$ to that of lower-bounding the metric entropy of a finite-dimensional ellipsoid.
We are thus in position to use a volume argument---namely \cite[Lemma~5.7]{wainwrightHighDimensionalStatistics2019}---to get
\begin{equation}\label{eq:finite-dim-cov-volume-argument}
  N(\varepsilon; \bar{\mathcal{E}}_\mu)
  \geq \varepsilon^{-\mathfrak{d}} \frac{\vol(\bar{\mathcal{E}}_\mu)}{\omega_{\mathfrak{d}}}
  = \varepsilon^{-\ecv (\varepsilon)} \prod_{\mu_n \geq \varepsilon} \mu_n,
\end{equation}
where we recall that $\omega_{\mathfrak{d}}$ denotes the volume of the $\mathfrak{d}$-dimensional Euclidean ball, and we have used the standard formula for the volume of a finite-dimensional ellipsoid (for more details see, e.g., the proof of \cite[Theorem~4]{firstpaper}).
Taking logarithm on both sides of \eqref{eq:finite-dim-cov-volume-argument} and using \eqref{eq:finite-infinite-dim-ME-comparison} concludes the proof of \eqref{eq:lower-bound-integral-ME}.

\paragraph{Preparation.}
Before turning to the upper bound,
we fix $\varepsilon>0$ small enough for $\ecv (\varepsilon) \geq 2$ to hold (which is possible since we assumed that at least two semi-axes are non-zero), and introduce
\begin{equation}\label{eq:define-auxiliary-eps}
   \lowesteps \coloneqq \sup \left\{x>0 \mid x \left(1+\ecv (x)^{-1}\right) < \varepsilon\right\}.
\end{equation}
We readily check from its definition in \eqref{eq:definition-ecf} that $\ecv $ is a left-continuous function, that is, 
$\lim_{\eta \downarrow 0} \ecv (x-\eta) = \ecv (x)$, for all $x>0$.
In particular, using the left-continuity of $\ecv $ in \eqref{eq:define-auxiliary-eps} guarantees that 
\begin{equation}\label{eq:first-inequality-epsilons}
  \lowesteps \left(1+\ecv (\lowesteps)^{-1}\right) \leq  \varepsilon,
  \quadtext{and, therefore, that}
  \lowesteps \leq  \frac{\varepsilon}{1+\ecv ( \lowesteps)^{-1}} \eqcolon  \mediumeps.
\end{equation}
We further set $d\coloneqq \ecv (\mediumeps)$, 
fix an integer $k \leq d$,
and define 
\begin{equation}\label{eq:definition-nb-block}
   \mediumepsgam 
   \coloneqq \mediumeps \left(1+\frac{\sqrt{k+1}}{d^{\gamma}}\right),
\quadtext{where}
  \gamma \coloneqq \frac{\ln\left(\ecv ( \lowesteps)\right)}{\ln(d)}+1.
\end{equation}
Using that $\ecv$ is non-increasing, we get $d\geq \ecv(\varepsilon)\geq 2$ and  $\gamma \geq 2$.
Additionally, using the definition \eqref{eq:definition-nb-block} of $\gamma$, we have
\begin{equation}\label{eq:bound-epsgamm-eps}
  \frac{\sqrt{k+1}}{d^{\gamma}}
  = \frac{\sqrt{k+1}}{d\ecv ( \lowesteps)}
  < \frac{1}{\ecv ( \lowesteps)},
\end{equation}
where we used both $k\leq d$ and $d\geq 2$ in the last step.
Plugging the definition \eqref{eq:first-inequality-epsilons} of $\mediumeps$ in the definition \eqref{eq:definition-nb-block} of $\mediumepsgam$ and applying the bound obtained in \eqref{eq:bound-epsgamm-eps} yields 
\begin{equation}\label{eq:bound-epsgamma-vs-epsmid}
  \mediumepsgam 
  = \varepsilon \left(\frac{1+d^{-\gamma}\sqrt{k+1}}{1+\ecv ( \lowesteps)^{-1}}\right)
  < \varepsilon.
\end{equation}
Combining \eqref{eq:first-inequality-epsilons}, \eqref{eq:definition-nb-block}, and \eqref{eq:bound-epsgamma-vs-epsmid},
we get $\lowesteps \leq  \mediumeps<\mediumepsgam< \varepsilon$,
which, using that the semi-axis-counting function $\ecv $ is non-increasing, turns into $\ecv ( \lowesteps) \geq \ecv ( \mediumeps) \geq \ecv ( \mediumepsgam) \geq \ecv ( \varepsilon)$.

In fact, it will be useful to additionally prove that $\ecv ( \lowesteps)$ and $\ecv ( \varepsilon)$ are asymptotically equivalent, in the sense that 
\begin{equation}\label{eq:asymp-equivalence-allecv}
  \ecv ( \lowesteps) 
  \sim \ecv ( \mediumeps) 
  \sim \ecv ( \mediumepsgam) 
  \sim \ecv ( \varepsilon), 
  \quad \text{as }\varepsilon \to 0,
\end{equation}
which we do next.
Note that it is sufficient to establish $ \ecv \left(\lowesteps\right) \sim \ecv (\varepsilon)$, as $\varepsilon \to 0$.
To this end,
we first come back to the definition of $\lowesteps$ in \eqref{eq:define-auxiliary-eps}, which gives
\begin{equation}\label{eq:def-eps-eta}
  \lowesteps_\eta \coloneqq \left(\lowesteps+\eta\right)\left(1+\ecv (\lowesteps+\eta)^{-1}\right) \geq \varepsilon,
  \quad \text{for all } \eta >0.
\end{equation}
We next take $f$ as in (RC) and apply Lemma~\ref{lem:regularity-evf-o-term}  with $x = \lowesteps+\eta(\lowesteps)$ (see Appendix~\ref{sec:proof-lem-regularity-evf-o-term}), to get 
\begin{equation*}
  f\left(\lowesteps_{\eta(\lowesteps)}\right) 
  \sim f\left(\lowesteps+\eta(\lowesteps)\right),
  \quad \text{as } \lowesteps \to 0, 
\end{equation*}
for any map $\eta \colon \Rp\to\Rp$ satisfying $\eta(\lowesteps) \to 0$, as $\lowesteps\to 0$.
By continuity of $f$, there exists such a map---say $\bar{\eta}$---satisfying 
\begin{equation*}
  f\left(\lowesteps+\bar{\eta}(\lowesteps)\right) 
  \sim f\left(\lowesteps\right),
  \quadtext{and, therefore,}
  f \left(\lowesteps_{\bar{\eta}(\lowesteps)}\right) 
  \sim f \left(\lowesteps\right),
  \quad \text{as } \lowesteps \to 0.
\end{equation*}
Using that $f$ is asymptotically equivalent to $\ecv$, we then obtain
\begin{equation}\label{eq:asymp-equiv-shifted-ecv}
  \ecv \left(\lowesteps_{\bar{\eta}(\lowesteps)}\right) 
  \sim \ecv \left(\lowesteps\right),
  \quad \text{as } \lowesteps \to 0.
\end{equation}
Furthermore, we get from \eqref{eq:first-inequality-epsilons} and \eqref{eq:def-eps-eta} the inequalities $\lowesteps < \varepsilon \leq \lowesteps_{\bar{\eta}(\lowesteps)}$, which, owing to $\ecv$ being non-increasing, yield
\begin{equation}\label{eq:relation-ecv-diverse-eps}
  \ecv \left(\lowesteps_{\bar{\eta}(\lowesteps)}\right) 
  \leq \ecv (\varepsilon)
  \leq \ecv \left(\lowesteps\right),
  \quad \text{for all } \varepsilon > 0.
\end{equation}
Combining the asymptotic equivalence in \eqref{eq:asymp-equiv-shifted-ecv} with the relation obtained in \eqref{eq:relation-ecv-diverse-eps}, 
we get $ \ecv \left(\lowesteps\right) \sim \ecv (\varepsilon)$, as $\varepsilon \to 0$, which establishes \eqref{eq:asymp-equivalence-allecv}.

Finally, we relate the integrals $\intk{1}(\mediumeps)$ and $\intk{1}(\varepsilon)$.
To this end, we first bound their difference according to
\begin{equation}\label{eq:bound-diff-integrals-ME}
  \intk{1}(\mediumeps) - \intk{1}(\varepsilon)
  = \int_{\mediumeps}^\infty\frac{\ecv (\dumvar)}{\dumvar} \diff \dumvar - \int_{\varepsilon}^\infty\frac{\ecv (\dumvar)}{\dumvar} \diff \dumvar
  = \int_{\mediumeps}^\varepsilon\frac{\ecv (\dumvar)}{\dumvar} \diff \dumvar
  \leq \left(\varepsilon-\mediumeps\right) \frac{\ecv (\mediumeps)}{\mediumeps},
\end{equation}
where the last step uses that $\ecv $ is non-increasing.
Recalling the definition of $\mediumeps$ in \eqref{eq:first-inequality-epsilons},
we see that the right-hand side in \eqref{eq:bound-diff-integrals-ME} satisfies
\begin{equation}\label{eq:bound-diff-integrals-ME2}
  \left(\varepsilon-\mediumeps\right) \frac{\ecv (\mediumeps)}{\mediumeps}
  = \left(1- \frac{1}{1+\ecv ( \lowesteps)^{-1}}\right) \left(1+\ecv ( \lowesteps)^{-1}\right)
  \ecv (\mediumeps)
  = \frac{\ecv (\mediumeps)}{\ecv ( \lowesteps)}.
\end{equation}
We have seen that $\ecv (\mediumeps)\leq \ecv (\lowesteps)$,
so that, upon combining \eqref{eq:bound-diff-integrals-ME} and \eqref{eq:bound-diff-integrals-ME2}, we get $  \intk{1}(\mediumeps) - \intk{1}(\varepsilon) \leq 1 $. 
Consequently, in the limit $\varepsilon\to 0$, we get 
\begin{equation}\label{eq:transfer-sum-tildeeps-Ieps}
    \intk{1}(\mediumeps) 
    = \intk{1}(\varepsilon) + O_{\varepsilon\to 0}(1).
\end{equation}

\paragraph{Two upper bounds on metric entropy.}
We now turn to the derivation of a corresponding upper bound and split the task into proving both
\begin{equation}\label{eq:upper-bound-integral-ME1}
  H(\varepsilon; \mathcal{E}_\mu)
  \leq \sum_{n \in \Ns} \ln_+ \left(\frac{\mu_n}{\varepsilon}\right) + O_{\varepsilon\to 0}\left(\ecv (\varepsilon)\right)
\end{equation}
and
\begin{equation}\label{eq:upper-bound-integral-ME2}
  H(\varepsilon; \mathcal{E}_\mu)
  \leq \sum_{n \in \Ns} \ln_+ \left(\frac{\mu_n}{\varepsilon}\right) + O_{\varepsilon\to 0}\left( \sqrt{\ecv (\varepsilon)\ln\left(\ecv (\varepsilon)\right)\ln\left(\varepsilon^{-1}\right)}\right).
\end{equation}
The desired relation \eqref{eq:main-result-ME} then follows by combining the best of the upper bounds \eqref{eq:upper-bound-integral-ME1}--\eqref{eq:upper-bound-integral-ME2}---that is, the one with smallest $O$-term---with the lower bound
\eqref{eq:lower-bound-integral-ME} and the formula
\eqref{eq:sum-formua-I1}.
In view of \eqref{eq:transfer-sum-tildeeps-Ieps}, 
in order to establish \eqref{eq:upper-bound-integral-ME1} and \eqref{eq:upper-bound-integral-ME2}, it is sufficient to prove respectively
\begin{equation}\label{eq:partial-result-OM}
  H(\varepsilon; \ellip)
  \leq \sum_{n \in \Ns} \ln_+ \left(\frac{\mu_n}{\mediumeps}\right) 
  + O_{\varepsilon\to 0}\left(\ecv (\varepsilon)\right)
\end{equation}
and
\begin{equation}\label{eq:upper-bound-integral-ME2-OM}
  H(\varepsilon; \mathcal{E}_\mu)
  \leq \sum_{n \in \Ns} \ln_+ \left(\frac{\mu_n}{\mediumeps}\right) + O_{\varepsilon\to 0}\left( \sqrt{\ecv (\varepsilon)\ln\left(\ecv (\varepsilon)\right)\ln\left(\varepsilon^{-1}\right)}\right).
\end{equation}

\paragraph{Proof of \eqref{eq:partial-result-OM}.}
In this paragraph, we set $k=1$ and introduce the ellipsoid $\tilde \ellip$ obtained from $\ellip$ by retaining only the semi-axes greater or equal to $\mediumeps$ and setting the remaining ones to zero---without loss of generality, we may assume that $\tilde \ellip$ retains the first $d$ components of $\ellip$.
$\tilde \ellip$ is therefore a finite-dimensional ellipsoid, of dimension $d$.
Let us further define the finite lattice 
\begin{equation}\label{eq:definition-lattice-two-blocks}
  \Omega
  \coloneqq 
  \left\{\omega \in \mathbb{R}_+^{2} \mid \| 
  \omega \|_{2} \leq 1+ d^{-\gamma}\sqrt{2} \text{ with } d^{\,2\gamma} \omega_1^2 \in \mathbb{N} \text{ and } d^{\,2\gamma} \omega_2^2 \in \mathbb{N} \right\},
\end{equation}
whose cardinality can be bounded according to 
\begin{equation}\label{eq:cardinality-lattice}
  \lvert\Omega\rvert \leq K d^{4\gamma},
  \quad \text{for some } K>0.
\end{equation}
In view of application of \cite[Lemma~6]{secondpaper},
we need to verify that 
\begin{equation}\label{eq:inclusion-ellipsoid-product-two}
\ellip
\subseteq \bigcup_{\omega\in\Omega} \left( \omega_1 \tilde \ellip \times \omega_{2}    \mathcal{B} ( \mediumeps)\right),
\end{equation}
where $\mathcal{B} ( \mediumeps)$ denotes the ball in $\ell^2(\Ns)$ centered at zero and of radius $\mediumeps$.
In order to prove \eqref{eq:inclusion-ellipsoid-product-two}, let us take an arbitrary $x\in \ellip$, and introduce 
\begin{equation}\label{eq:definition-points-latice}
  \omega_1 \coloneqq d^{-\gamma} \left\lceil\left\| \left\{\mu_j^{-1} d^{\gamma} x_j \right\}_{j=1}^d \right\|_{2}^2 \right\rceil^{1/2}
  \quadtext{and}
  \omega_2 \coloneqq d^{-\gamma} \left\lceil\left\| \left\{\mediumeps^{-1} d^{\gamma} x_j \right\}_{j=d+1}^\infty \right\|_{2}^2 \right\rceil^{1/2}.
\end{equation}
We immediately get from \eqref{eq:definition-points-latice} the inequalities
\begin{equation*}
  \left\| \left\{\mu_j^{-1}  x_j \right\}_{j=1}^d \right\|_{2} \leq \omega_1
   \quadtext{and}
   \left\| \left\{\mediumeps^{-1}  x_j \right\}_{j=d+1}^\infty \right\|_{2} \leq \omega_2,
\end{equation*}
from which we deduce $x\in  \omega_1 \tilde \ellip \times \omega_{2}    \mathcal{B} ( \mediumeps)$.
Moreover, $(\omega_1,\omega_2)\in\Omega$ since
\begin{align*}
 \| \omega \|_{2}^2
  = \omega_1^2+\omega_2^2 
  &= d^{-2\gamma} \left\lceil\left\| \left\{\mu_j^{-1} d^{\gamma} x_j \right\}_{j=1}^d \right\|_{2}^2 \right\rceil 
  + d^{-2\gamma} \left\lceil\left\| \left\{\mediumeps^{-1} d^{\gamma} x_j \right\}_{j=d+1}^\infty \right\|_{2}^2 \right\rceil\\
  &\leq   \left\| \left\{\mu_j^{-1}  x_j \right\}_{j=1}^d \right\|_{2}^2 
  + \left\| \left\{\mediumeps^{-1}  x_j \right\}_{j=d+1}^\infty \right\|_{2}^2 + 2d^{-2\gamma}
  \\
  &\leq   \left\| \left\{\mu_j^{-1}  x_j \right\}_{j=1}^\infty \right\|_{2}^2 
   + 2d^{-2\gamma}
   \leq 1 + 2d^{-2\gamma} \leq \left(1 + d^{-\gamma}\sqrt{2}\right)^2,
\end{align*}
where the second inequality uses that 
$\mediumeps \geq \mu_j$, for all $j\geq d+1$, and the penultimate bound comes from the fact that $x$ belongs to $\ellip$.
(We used the convention $\mu_j^{-1}  x_j=0$ if $\mu_j = x_j=0$.)
We have thus proven that, for all  $x\in \ellip$, there exists $(\omega_1,\omega_2)\in\Omega$ such that $ x\in  \omega_1 \tilde \ellip \times \omega_{2}    \mathcal{B} ( \mediumeps)$,
which establishes \eqref{eq:inclusion-ellipsoid-product-two}.
The inclusion  \eqref{eq:inclusion-ellipsoid-product-two} ensures that the hypotheses of \cite[Lemma~6]{secondpaper} are satisfied
and, upon its application with $p=q=2$ and $\rho_1=\mediumeps$, 
we get 
\begin{equation}\label{eq:finite-infinite-dim-ME-comparison-UB-bbis}
  H\left(\max_{\omega\in\Omega} [ (\omega_1\mediumeps)^2 + (\omega_2\mediumeps)^2]^{1/2}; \mathcal{E}_\mu\right)
  \leq H\left(\mediumeps; \tilde\ellip\right)
  + \ln\left(\lvert\Omega\rvert\right).
\end{equation}
We further remark from \eqref{eq:definition-lattice-two-blocks} that the elements of $\Omega$ have bounded norm, implying 
\begin{equation*}
  \max_{\omega\in\Omega} [ (\omega_1\mediumeps)^2 + (\omega_2\mediumeps)^2]^{1/2} 
  = \mediumeps \max_{\omega\in\Omega} \|\omega\|_2 
  \leq \mediumeps \left(1+ d^{-\gamma}\sqrt{2}\right)
  = \mediumepsgam,
\end{equation*}
where the last step simply uses the definition \eqref{eq:definition-nb-block} of $\mediumepsgam$ with $k=1$.
Consequently, \eqref{eq:finite-infinite-dim-ME-comparison-UB-bbis} yields  
\begin{equation}\label{eq:finite-infinite-dim-ME-comparison-UB}
  H(\mediumepsgam; \mathcal{E}_\mu)
  \leq H\left(\mediumeps; \tilde\ellip\right)
  + \ln\left(\lvert\Omega\rvert\right),
\end{equation}
Next, we infer from $\varepsilon > \mediumepsgam$ and from \eqref{eq:cardinality-lattice} combined with the asymptotic equivalence $d=\ecv (\mediumeps)\sim \ecv (\varepsilon)$ as $\varepsilon\to 0$ obtained in \eqref{eq:asymp-equivalence-allecv}
the relations
\begin{equation}\label{eq:aux-inequalities-UB}
  H(\varepsilon; \mathcal{E}_\mu)
  \leq H(\mediumepsgam; \mathcal{E}_\mu)
  \quadtext{and}
  \ln\left(\lvert\Omega\rvert\right) 
  =  O_{\varepsilon\to 0}\left(\ln\left(\ecv (\varepsilon)\right)\right).
\end{equation}
Furthermore, upon application of a volume argument---see, e.g., \cite[Lemma~5.7]{wainwrightHighDimensionalStatistics2019}---we get an upper bound on the covering number of the $d$-dimensional ellipsoid $\tilde\ellip$ according to
\begin{equation}\label{eq:volume-argument-UB}
  N\left(\mediumeps; \tilde\ellip\right)
  \leq \left(K'\mediumeps\right)^{-d} \prod_{\mu_n \geq \mediumeps} \mu_n,
  \quad \text{for some } K'>0.
\end{equation}
Recalling that $d = \ecv (\mediumeps)$ and taking logarithm on both sides of \eqref{eq:volume-argument-UB} gives
\begin{equation}\label{eq:volume-argument-UB2}
  H\left(\mediumeps; \tilde\ellip\right)
  \leq \sum_{n \in \Ns} \ln_+ \left(\frac{\mu_n}{\mediumeps}\right) 
  + O_{\varepsilon\to 0}\left(\ecv (\mediumeps)\right)
  = \sum_{n \in \Ns} \ln_+ \left(\frac{\mu_n}{\mediumeps}\right) 
  + O_{\varepsilon\to 0}\left(\ecv (\varepsilon)\right),
\end{equation}
where we used once again $d=\ecv (\mediumeps)\sim \ecv (\varepsilon)$, 
as $\varepsilon\to 0$.
Putting \eqref{eq:finite-infinite-dim-ME-comparison-UB}, \eqref{eq:aux-inequalities-UB}, and \eqref{eq:volume-argument-UB2} together establishes \eqref{eq:partial-result-OM}.

\paragraph{Proof of \eqref{eq:upper-bound-integral-ME2-OM}.}
In view of \eqref{eq:partial-result-OM},
it is sufficient to establish 
\begin{equation*}
  H(\varepsilon_p; \mathcal{E}_\mu)
  \leq \sum_{n \in \Ns} \ln_+ \left(\frac{\mu_n}{\mediumeps_p}\right) + O_{p\to \infty}\left( \sqrt{\ecv (\varepsilon_p)\ln\left(\ecv (\varepsilon_p)\right)\ln\left(\varepsilon_p^{-1}\right)}\right)
\end{equation*}
for every sequence
$\{\varepsilon_p\}_{p\in\Ns}$  satisfying
$ \sqrt{\ecv (\varepsilon_p)\ln(\ecv (\varepsilon_p))\ln(\varepsilon_p^{-1})}
\leq \ecv(\varepsilon_p)$, for all $p\in\Ns$, and $\varepsilon_p\to 0$ as $p\to \infty$.
To simplify notation, 
we omit the index of the sequence,
i.e., we simply write $\varepsilon$ instead of $\varepsilon_p$, and $\varepsilon\to 0$ for the limit $p \to \infty$.
We thus assume $  \sqrt{\ecv (\varepsilon)\ln(\ecv (\varepsilon))\ln(\varepsilon^{-1})} \leq \ecv (\varepsilon)$, 
so that we must have $\ecv(\varepsilon)\to\infty$, as $\varepsilon\to 0$.
Recalling from \eqref{eq:asymp-equivalence-allecv} that $d = \ecv (\mediumeps) \sim \ecv (\varepsilon)$, as $\varepsilon\to 0$,
we also have $\sqrt{d\ln(d)\ln(\varepsilon^{-1})} = O_{\varepsilon\to0}(d)$.
In particular, setting
\begin{equation}\label{eq:definition-nb-block2}
    k \coloneqq \left\lfloor \sqrt{\frac{d \ln{\left(\varepsilon^{-1}\right)}}{\ln\left(d\right)}} \right\rfloor,
\end{equation}
we get $k= O_{\varepsilon\to 0}(d/\ln(d))$,
and we fix $\varepsilon$ small enough for $k\leq d/9$ to hold.
Next, we fix $s\in\Ns$---which will be specified shortly after---and introduce the non-increasing sequence of positive integers $\{d_j\}_{j\in\Ns}$, defined as $d_j \coloneqq \lceil d/k\rceil$, for $j < s$,
and $d_j \coloneqq \lfloor d/k\rfloor$, for $j\geq s$.
In particular, $d_j\geq 9$, for all $j\in\Ns$.
It will be convenient to introduce the notation $\bar d_0\coloneqq 0$ and  $\bar d_j \coloneqq d_1+\dots+d_j$, for all $j\in\Ns$.
Note that $s$ can be chosen such that $d=\bar d_k$.

There are exactly $d=\ecv (\mediumeps)$ semi-axes of $\ellip$ greater than or equal to $\mediumeps$; 
let us label their non-increasing rearrangement as $\tilde \mu_1 \geq \dots \geq \tilde \mu_d$ and set
\begin{equation}\label{eq:construction-axes-blocks}
  \bar{\mu}_j
  \coloneqq 
  \begin{cases}
    \tilde\mu_{\bar d_{j-1}+1}\left(1+d^{-1}\right), \quad &\text{if } j\in \{1, \dots, k\}, \text{ and } \\
    \mediumeps \quad & \text{otherwise.}
  \end{cases}
\end{equation}
In particular, the $\{\bar \mu_j\}_{j\in\Ns}$ are also ordered non-increasingly, and we have $\bar{\mu}_{k+1}\leq  \mediumeps < \bar{\mu}_k$.
Following \cite[Definition~10]{secondpaper},
we can now define the mixed ellipsoid $\mathcal{E}^{(m)}_{\bar \mu}$,
with parameters $p_1=p_2=2$, non-increasing semi-axes $\{\bar \mu_j\}_{j\in\Ns}$, and dimensions $\{d_j\}_{j\in\Ns}$.
Note that, by construction, there exists an isometry $\iota$ of $\ell^2(\Ns)$ satisfying $ \iota(\mathcal{E}_\mu)\subseteq \mathcal{E}^{(m)}_{\bar \mu}$.

We are now in position of applying \cite[Theorem~12]{secondpaper} to the mixed ellipsoid $\mathcal{E}^{(m)}_{\mu}$ with non-increasing semi-axes $\{\bar \mu_j\}_{j\in\Ns}$ and dimensions $\{d_j\}_{j\in\Ns}$,
which yields
\begin{equation}\label{eq:bound-block-decomp}
  N\left(\mediumepsgam ; \mathcal{E}^{(m)}_\mu\right)
  \leq  \kappa_k^d \frac{\prod_{j=1}^{k}\bar \mu_j^{d_j}}{\mediumepsgam^d},
\end{equation}
where the sequence $\{\kappa_k\}_{k\in\Ns}$ satisfies
\begin{equation}\label{eq:scaling-kappa-rogers}
  d \ln(\kappa_k)  
  = O_{\varepsilon\to 0}\left({\gamma k \ln(d)}\right)
  = O_{\varepsilon\to 0}\left({k \ln(d)}\right).
\end{equation}
We used in \eqref{eq:scaling-kappa-rogers} that $\gamma\to2$, as $\varepsilon \to 0$, 
which comes as a direct consequence of the  definition of $\gamma$ in \eqref{eq:definition-nb-block} and the asymptotic equivalence \eqref{eq:asymp-equivalence-allecv}.
Next, we combine three well-known properties of covering numbers---namely that they are non-increasing functions of the radius of the covering balls (covering with larger balls requires less covering balls), that they are non-decreasing functions with respect to set inclusion (covering a larger set requires more covering balls), and that they are invariant under isometries (covering numbers are determined by the metric structure)---with the relations $\mediumeps< \mediumepsgam< \varepsilon $ and $ \iota(\mathcal{E}_\mu)\subseteq \mathcal{E}^{(m)}_{\mu}$, 
to deduce that
\begin{equation}\label{eq:bound-block-decomp2}
  N\left(\varepsilon; \mathcal{E}_\mu\right)
  \leq N\left(\mediumepsgam ; \mathcal{E}^{(m)}_\mu\right),
  \text{ and, thus, from  \eqref{eq:bound-block-decomp}, that }
  N(\varepsilon; \mathcal{E}_\mu)
  \leq  \kappa_k^d \frac{\prod_{j=1}^{k}\bar \mu_j^{d_j}}{\mediumeps^d}.
\end{equation}
Moreover, by \eqref{eq:construction-axes-blocks} and the ordering $\tilde \mu_1 \geq \dots \geq \tilde \mu_d$, 
the semi-axis $\bar \mu_j$, 
for $j \in\{2, \dots, k\}$,
is always smaller than or equal to
$\tilde \mu_{i}(1+d^{-1})$, for all $i\in\{\bar d_{j-2} + 1, \dots, \bar d_{j-1}\}$, so that 
\begin{equation}\label{eq:bound-product-ordered-axes-bbis}
  \prod_{j=1}^{k}\left(\frac{\bar \mu_j}{\mediumeps}\right)^{d_j}
  \leq \left(1+d^{-1}\right)^d\left(\frac{\tilde \mu_1}{\mediumeps}\right)^{d_1} 
  \prod_{i=1}^{\bar d_{k-1}}\left(\frac{\tilde \mu_i}{\mediumeps}\right) \leq \left(1+d^{-1}\right)^d \left(\frac{\tilde \mu_1}{\mediumeps}\right)^{d_1} 
  \prod_{i=1}^{d}\left(\frac{\tilde \mu_i}{\mediumeps}\right), 
\end{equation}
where the first step uses that the $\{d_j\}_{j\in\Ns}$ are non-increasing,
and the second step is based on $\tilde \mu_i/\mediumeps\geq 1$,
for all $i\in\{\bar d_{k-1}+1, \dots, \bar d_k\}$.
Note that, by definition of $\tilde \mu_1, \dots, \tilde \mu_d$, the product in the right-most side of \eqref{eq:bound-product-ordered-axes-bbis} runs over all the semi-axes of $\ellip$ greater than or equal to $\mediumeps$.
Therefore, recalling that $d_1\leq d/k+1$ and that we can choose $A>0$ such that all the semi-axes $\{ \mu_n\}_{n\in\Ns}$ are upper-bounded by $A$, we get from \eqref{eq:bound-product-ordered-axes-bbis} that
\begin{equation}\label{eq:block-to-ellipsoid}
  \prod_{j=1}^{k}\left(\frac{\bar \mu_j}{\mediumeps}\right)^{d_j}
  \leq \left(1+d^{-1}\right)^d \left(A \mediumeps^{-1}\right)^{\frac{d}{k}+1} 
  \prod_{\mu_n \geq \mediumeps}\left(\frac{\mu_n}{\mediumeps}\right).
\end{equation}
Taking logarithm on both sides of \eqref{eq:bound-block-decomp2} and using \eqref{eq:block-to-ellipsoid}, 
we can upper-bound the metric entropy of $\mathcal{E}_\mu$ according to
\begin{equation}\label{eq:bound-ME-integral-error}
  H\left(\varepsilon; \mathcal{E}_\mu\right)
  \leq \sum_{\mu_n \geq \mediumeps} \ln\left(\frac{\mu_n}{\mediumeps}\right)
  + d \ln(\kappa_k) 
  + \left(\frac{d}{k}+1\right)\ln\left(A\mediumeps^{-1}\right)
  + d \ln\left(1+d^{-1}\right).
\end{equation}

We next observe that $d\ln(1+1/d) = O_{\varepsilon\to 0}(1)$,
and, from the definition of $\mediumeps$ in \eqref{eq:first-inequality-epsilons}, that $\ln(A\mediumeps^{-1})\sim \ln(\varepsilon^{-1})$, as $\varepsilon\to 0$.
Moreover, 
we can control the  error terms in \eqref{eq:bound-ME-integral-error} 
by combining the asymptotic scaling  \eqref{eq:scaling-kappa-rogers} with the definition of $k$ in \eqref{eq:definition-nb-block2}
according to
\begin{equation}\label{eq:bound-error-ME-UB}
  d \ln(\kappa_k)  
  = O_{\varepsilon\to 0}\left(\sqrt{d \ln(d) \ln\left(\varepsilon^{-1}\right)}\right),
  \text{ and }
  \frac{d}{k}\ln\left(\varepsilon^{-1}\right)
  = O_{\varepsilon\to 0}\left( \sqrt{d \ln(d) \ln\left(\varepsilon^{-1}\right)}\right).
\end{equation}
Putting the asymptotic bounds obtained in \eqref{eq:bound-error-ME-UB} together, and recalling from \eqref{eq:asymp-equivalence-allecv} that $d = \ecv (\mediumeps) \sim \ecv (\varepsilon)$, as $\varepsilon\to 0$, 
we get 
\begin{equation*}
    d \ln(\kappa_k) 
  + \left(\frac{d}{k}+1\right)\ln\left(A\mediumeps^{-1}\right)
  + d \ln\left(1+d^{-1}\right)
  = O_{\varepsilon\to 0}\left(\sqrt{\ecv (\varepsilon)\ln\left(\ecv (\varepsilon)\right)\ln\left(\varepsilon^{-1}\right)}\right),
\end{equation*}
which, with \eqref{eq:bound-ME-integral-error} yields \eqref{eq:upper-bound-integral-ME2-OM}.

\paragraph{Proof that \eqref{eq:main-result-ME} implies \eqref{eq:asymp-equivalence-ME-I1-first-order}.}
In order to prove that \eqref{eq:main-result-ME} implies \eqref{eq:asymp-equivalence-ME-I1-first-order}, 
we first take $f$ as in (RC) and introduce the integral 
\begin{equation*}
  \tilde I_1(\varepsilon)
  \coloneqq  \int_{\varepsilon}^{\infty} \frac{f(\dumvar)}{\dumvar} \diff \dumvar,
  \quad \text{for all } \varepsilon > 0.
\end{equation*}
The integral $\tilde I_1$ is continuously differentiable, 
with derivative given by
\begin{equation}\label{eq:derivative-regularized-integral}
   \tilde I_1'(\varepsilon)
   = -\frac{f(\varepsilon)}{\varepsilon}, 
   \quad \text{for all } \varepsilon > 0,
  \quadtext{and it satisfies}
  \tilde I_1(\varepsilon) \sim \intk{1}(\varepsilon),
  \quad \text{as } \varepsilon \to 0,
\end{equation}
see Lemma~\ref{lem:abelian-integration}.
Since, as $\varepsilon\to 0$, we have the equivalences $f(\varepsilon) \sim \ecv(\varepsilon)$ from (RC) and $\tilde I_1(\varepsilon) \sim \intk{1}(\varepsilon)$ from \eqref{eq:derivative-regularized-integral},
in order to show that the $O_{\varepsilon\to 0}(\cdot)$-term in \eqref{eq:main-result-ME} is negligible compared to $\intk{1}(\varepsilon)$,
it is enough to prove that 
\begin{equation}\label{eq:limit-to-prove-I1-dominates}
  \lim_{\varepsilon\to 0}
  \frac{\min\left\{f(\varepsilon), \sqrt{f (\varepsilon)\ln\left(f (\varepsilon)\right)\ln\left(\varepsilon^{-1}\right)}\right\}}{\tilde I_1(\varepsilon)}
  =0.
\end{equation}
To prove the limit \eqref{eq:limit-to-prove-I1-dominates}, 
we first observe that, upon application of the chain rule in the definition  \eqref{eq:definition-elasticity} of the elasticity $\rho$ of $f$, we obtain
\begin{equation}\label{eq:chain-rule-elasticity}
  \rho(t)
  = -\frac{e^{-t}f'\left(e^{-t}\right)}{f\left(e^{-t}\right)},
  \quad \text{for all } t>0.
\end{equation}
Therefore, choosing $t=\ln(\varepsilon^{-1})$ in \eqref{eq:chain-rule-elasticity} and using the formula for the derivative of $\tilde I_1$ given in \eqref{eq:derivative-regularized-integral}, 
we get
\begin{equation*}
  \rho\left(\ln\left(\varepsilon^{-1}\right)\right)
  = - \frac{\varepsilon f'(\varepsilon)}{f(\varepsilon)}
  = \frac{f'(\varepsilon)}{\tilde I'_1(\varepsilon)},
  \quad \text{for all } \varepsilon > 0.
\end{equation*}
We know that $\tilde I_1(\varepsilon)$ goes to infinity as $\varepsilon\to 0$.
Moreover, we know by (RC) that $\rho(\ln\left(\varepsilon^{-1}\right))$ possesses a (potentially infinite) limit $b$ as $\varepsilon \to 0$.
The conditions for application of l'H\^{o}pital's rule (see, e.g., \cite[Theorem~5.13]{rudin1953principles}) are thus satisfied, yielding
\begin{equation}\label{eq:application-lhopital-1}
  \lim_{\varepsilon\to 0} \frac{f(\varepsilon)}{\tilde I_1(\varepsilon)} 
  = \lim_{\varepsilon\to 0} \rho\left(\ln\left(\varepsilon^{-1}\right)\right)
  \eqqcolon b.
\end{equation}
We next split the analysis into three cases depending on whether $b$ is zero, non-zero and finite, or infinite.

First, in the case $b=0$, i.e.,
\begin{equation*}
  \lim_{\varepsilon\to 0} \frac{f(\varepsilon)}{\tilde I_1(\varepsilon)} 
  = 0,
\end{equation*}
the limit \eqref{eq:limit-to-prove-I1-dominates} is trivially verified and there is nothing to prove.

Second, in the case $b\in(0, \infty)$,
we know from Appendix~\ref{sec:reg-var-zero} that the function $f$ is regularly varying (at zero) with index $b$.
In particular, $f$ can be factorized according to $f(\varepsilon) = \varepsilon^{-b} \psi(\varepsilon)$, for all $\varepsilon>0$ and with $\psi$ a slowly varying function (see \cite[Theorem~1.4.1]{binghamRegularVariation1987}).
Consequently, we get the asymptotic equivalence 
\begin{equation}\label{eq:limit-case-2-O-negligible}
    \frac{\sqrt{f (\varepsilon)\ln\left(f (\varepsilon)\right)\ln\left(\varepsilon^{-1}\right)}}{\tilde I_1(\varepsilon)}
    \sim b\, \varepsilon^{b/2} \sqrt{\frac{\ln\left(\varepsilon^{-b} \psi(\varepsilon)\right)\ln\left(\varepsilon^{-1}\right)}{\psi(\varepsilon)}},
    \quad \text{as } \varepsilon\to 0,
\end{equation}
where we used that \eqref{eq:application-lhopital-1} implies $\tilde I_1(\varepsilon) \sim b f(\varepsilon)$, as $\varepsilon\to 0$.
With the help of \cite[Section~1.3.3]{binghamRegularVariation1987},
we can check that the square root appearing in the right-hand side of \eqref{eq:limit-case-2-O-negligible} is a slowly varying function of $\varepsilon$, which allows us to apply \cite[Proposition~1.3.6 (v)]{binghamRegularVariation1987} to deduce 
\begin{equation}\label{eq:limit-reg-var-zero}
  \lim_{\varepsilon\to 0}
  \varepsilon^{b/2} \sqrt{\frac{\ln\left(\varepsilon^{-b} \psi(\varepsilon)\right)\ln\left(\varepsilon^{-1}\right)}{\psi(\varepsilon)}}
  = 0.
\end{equation}
The desired result \eqref{eq:limit-to-prove-I1-dominates} follows from \eqref{eq:limit-case-2-O-negligible} and \eqref{eq:limit-reg-var-zero}.

Finally, we turn to the case $b=\infty$.
Similarly to the previous case, we aim to prove
\begin{equation}\label{eq:limit-proof-case-3-non-log}
  \lim_{\varepsilon\to 0} \frac{\sqrt{f(\varepsilon)\ln\left(f (\varepsilon)\right)\ln\left(\varepsilon^{-1}\right)}}{\tilde I_1(\varepsilon)} = 0.
\end{equation}
We will do so by working in logarithmic scale.
Concretely, setting $t=\ln(\varepsilon^{-1})$, 
we are equivalently required to prove the logarithmic-scale counterpart of \eqref{eq:limit-proof-case-3-non-log}, that is
\begin{equation}\label{eq:limit-proof-case-3-log}
  \lim_{t\to \infty}\frac{\sqrt{t h(t)}\, e^{h(t)/2}}{\tilde I_1(e^{-t})}=0,
\end{equation}
where $h$ is the same as in \eqref{eq:definition-elasticity}.

To this end,
we first prove that 
\begin{equation}\label{eq:limit-proof-case-3}
  \lim_{t\to \infty} \rho(t) \,  g(t) \sqrt{t}
  = 0,
\end{equation}
where we introduced the auxiliary function
\begin{equation}\label{eq:definition-aux-fct-g}
  g(t)\coloneqq \sqrt{h(t)}\, e^{-h(t)/2},
  \quad \text{for all } t>0.
\end{equation}
Since $h'(t) = \rho(t)$ tends to infinity as $t\to\infty$, 
so does $h(t)$,
and we can find $t_0>0$ for which $h(t)\geq 2$ for all $t\geq t_0$.
Again using $h'(t) = \rho(t)$,
we can thus compute the derivative of $g$ according to 
\begin{equation}\label{eq:ODE}
  g'(t)
  = \frac{h'(t)}{2\sqrt{h(t)}} \, e^{-h(t)/2} - \frac{h'(t)}{2}\sqrt{h(t)}\, e^{-h(t)/2}
  = -\frac{h(t)-1}{2 h(t)} \rho(t)\, g(t),
\end{equation}
for all $t\geq t_0$.
Integrating the differential equation \eqref{eq:ODE}, we get 
\begin{equation}\label{eq:integrated-ODE}
  g(t)
  = g(t_0) \exp\left(-\frac{1}{2}\int_{t_0}^{t}\frac{h(u)-1}{h(u)}\rho(u)\diff u\right)
  \leq g(t_0) \exp\left(-\frac{1}{4}\int_{t_0}^{t}\rho(u)\diff u\right),
\end{equation}
for all $t\geq t_0$,
where the last step uses that $h(t)\geq 2$.
We deduce from \eqref{eq:integrated-ODE} that the limit \eqref{eq:limit-proof-case-3}  we wish to prove is implied by the limit
\begin{equation}\label{eq:limit-proof-case-3-bis}
  \rho(t)\exp\left(-\frac{1}{4}\int_{t_0}^{t}\rho(u)\diff u\right) \sqrt{t} \xrightarrow{} 0,
  \quad \text{as } t \to \infty.
\end{equation}
Using successively that $\rho$ is non-negative (since $h$ is non-decreasing) and that,  by (RC), $\rho$ is non-decreasing on $(t^*,\infty)$, for some $t^*>0$, 
we can lower-bound the integral of $\rho$ according to
\begin{equation}\label{eq:LB-integral-rho}
  \frac{1}{4}\int_{t_0}^{t}\rho(u)\diff u 
  \geq \frac{1}{4}\int_{t/2}^{t}\rho(u)\diff u 
  \geq \frac{t\rho(t/2)}{8},
  \quad \text{for all } t\geq 2\max\{t_0, t^*\}.
\end{equation}
Again using (RC), we have 
\begin{equation}\label{eq:two-individual-limits}
  \frac{t\rho(t/2)}{16} - \ln\left(\rho(t)\right) \to \infty,
  \quadtext{and, since $\rho(t)\to\infty$,} \frac{t\rho(t/2)}{16} - \ln\left( \sqrt{t}\right) \to \infty,
\end{equation}
all limits being taken as $t\to\infty$.
In particular, \eqref{eq:LB-integral-rho} and \eqref{eq:two-individual-limits} together imply
\begin{equation*}
  \rho(t)\exp\left(-\frac{1}{4}\int_{t_0}^{t}\rho(u)\diff u\right) \sqrt{t} 
  \leq \exp\left(-\frac{t\rho(t/2)}{8} + \ln\left(\rho(t)\right) + \ln\left( \sqrt{t}\right)\right)\xrightarrow{} 0,
  \quad \text{as } t \to \infty,
\end{equation*}
which is \eqref{eq:limit-proof-case-3-bis}, and therefore implies \eqref{eq:limit-proof-case-3}.

The last step consists in computing the derivatives
\begin{equation*}
  \left(\tilde I_1\left(e^{-t}\right)\right)'
  = e^{-t} \frac{f(e^{-t})}{e^{-t}} = e^{h(t)},
  \quad \text{for all } t>0,
\end{equation*}
and 
\begin{align*}
  \left(\sqrt{t h(t)}\, e^{h(t)/2}\right)'
  &= \frac{h'(t)}{2} \sqrt{t h(t)}\, e^{h(t)/2} + \frac{h'(t)}{2} \sqrt{\frac{t}{h(t)} }\, e^{h(t)/2} + \frac{1}{2} \sqrt{\frac{h(t)}{t} }\, e^{h(t)/2}\\ 
  &= \left(\frac{\rho(t)(h(t)+1)}{2h(t)}+ \frac{1}{2t}\right) \sqrt{t h(t)}\, e^{h(t)/2}, \quad \text{for all } t>0,
\end{align*}
so that, applying \eqref{eq:limit-proof-case-3}, we get
\begin{equation*}
  \frac{\left(\sqrt{t h(t)}\, e^{h(t)/2}\right)'}{\left(\tilde I_1\left(e^{-t}\right)\right)'}
  \sim \frac{1}{2} \rho(t) \, g(t) \sqrt{t}
  \to 0,
  \quad \text{as } t\to \infty.
\end{equation*}
Upon application of l'H\^{o}pital's rule (see, e.g., \cite[Theorem~5.13]{rudin1953principles}),
which is justified since $\tilde I_1\left(e^{-t}\right)$ tends to infinity in the limit $t\to\infty$,
we get 
\begin{equation*}
  \lim_{t\to\infty} \frac{\sqrt{t h(t)}\, e^{h(t)/2}}{\tilde I_1\left(e^{-t}\right)}
  = \lim_{t\to\infty}  \frac{\left(\sqrt{t h(t)}\, e^{h(t)/2}\right)'}{\left(\tilde I_1\left(e^{-t}\right)\right)'}
  =0,
\end{equation*}
which is the desired result \eqref{eq:limit-proof-case-3-log}.

\subsection{Proof of Theorem~\ref{thm:linear-risk-formula}}\label{sec:proof-theorem-linear-risk-formula}

Let $\sigma>0$ be fixed throughout the proof.
Let $\mu_* \coloneqq \max_{n\in\Ns}\mu_n$ and let $n^*\in\Ns$ be such that $\mu_{n^*}=\mu_*$;
they are both well-defined thanks to the assumption $\mu_n\to 0$, as $n\to\infty$.
We start from the definition of the linear minimax risk (recall Definition~\ref{def:minimax-linear-and-nonlinear}), 
in which we substitute $\hat x_{\sigma}(y)$ by $\{c_n y_n\}_{n\in\Ns}$, 
to get
\begin{align}
  R^L_{\sigma}\left(\ellip\right)
  = \newinf_{c}\,  \sup_{x \in \ellip} \, \mathbb{E}_{y\sim x}\left[\left\| cy-x \right\|^2_2\right]
  &= \newinf_{c}\,  \sup_{x \in \ellip} \, \mathbb{E}_{y\sim x}\left[\left\| c \sigma \xi + (1-c)x  \right\|^2_2\right]\label{eq:def-lin-risk-in-proof-1}\\
  &= \newinf_{c}\,  \left\{\sigma^2\sum_{n\in\Ns} c_n^2 + \sup_{x \in \ellip} \, \left\{\sum_{n\in\Ns}\left(1-c_n\right)^2x_n^2\right\}\right\}.\label{eq:def-lin-risk-in-proof-2}
\end{align} 
It is known since Pinsker's seminal work \cite{pinsker1980optimal} (see also \cite[Theorem~5.1]{johnstone2019estimation} or \cite[Lemma~3.2]{tsybakovIntroductionNonparametricEstimation2009})
that the coefficients $\{c_n\}_{n\in\Ns}$ can be taken of the form $c_n=(1-\varepsilon/\mu_n)_+$ for some $\varepsilon \in (0, \mu_*)$, with the convention $c_n=0$ if $\mu_n=0$.
(Recall that $(t)_+$ is equal to $t$ if $t>0$, and $0$ otherwise.)
We can thus replace the infimum over $c$ by an infimum over $\varepsilon$ in \eqref{eq:def-lin-risk-in-proof-1}--\eqref{eq:def-lin-risk-in-proof-2}
to get 
\begin{equation}\label{eq:formula-risk-infimum-eps-almost-there}
  R^L_{\sigma}\left(\ellip\right)
  = \newinf_{\varepsilon>0}\,   \left\{\sigma^2\sum_{n\in\Ns}\left(1-\frac{\varepsilon}{\mu_n}\right)_+^2 
  + \sup_{x \in \ellip} \, \left\{\sum_{n\in\Ns}\left(1-\left(1-\frac{\varepsilon}{\mu_n}\right)_+\right)^2x_n^2\right\}\right\}.
\end{equation}
Now, for all $x \in \ellip$ and all $\varepsilon>0$, 
we can split the second sum appearing in \eqref{eq:formula-risk-infimum-eps-almost-there} into two sums, depending on whether the $(\cdot)_+$-term is saturated or not, to get
\begin{align}
  \sum_{n\in\Ns}\left(1-\left(1-\frac{\varepsilon}{\mu_n}\right)_+\right)^2x_n^2
  &= \sum_{\mu_n \geq \varepsilon }\left(\frac{\varepsilon}{\mu_n}\right)^2x_n^2 + \sum_{\mu_n < \varepsilon }x_n^2 \label{eq:bound-pinsker-linear-sup-1}\\
  &= \varepsilon^2\sum_{\mu_n \geq \varepsilon }\frac{x_n^2}{\mu_n^2} +\varepsilon^2 \sum_{\mu_n < \varepsilon }\frac{x_n^2}{\varepsilon^2} 
  \leq \varepsilon^2\sum_{n\in\Ns}\frac{x_n^2}{\mu_n^2} 
  \leq \varepsilon^2.\label{eq:bound-pinsker-linear-sup-2}
\end{align}
Both inequalities in \eqref{eq:bound-pinsker-linear-sup-1}--\eqref{eq:bound-pinsker-linear-sup-2} become equalities when $x \in \ellip$ satisfies $x_{n^*} = \mu_*$ and $x_{n} = 0$, for all $n \neq n^*$.
Consequently, we have
\begin{equation*}
  \sup_{x \in \ellip} \, \left\{\sum_{n\in\Ns}\left(1-\left(1-\frac{\varepsilon}{\mu_n}\right)_+\right)^2x_n^2\right\} 
  = \varepsilon^2,
  \quad\text{for all } \varepsilon >0,
\end{equation*}
which, when plugged in \eqref{eq:formula-risk-infimum-eps-almost-there}, 
yields 
\begin{equation}\label{eq:formula-risk-infimum-eps}
  R_{\sigma}^L \left(\ellip\right)
  = \newinf_{\varepsilon>0} \left\{\sigma^2 \sum_{n\in\Ns} \left(1-\frac{\varepsilon}{\mu_n}\right)_+^2 + \varepsilon^2 \right\}.
\end{equation}

We next rewrite the sum appearing in the right-hand side of \eqref{eq:formula-risk-infimum-eps} as a Stieltjes integral; 
recall that  similar arguments were used when establishing \eqref{eq:stiltjes-to-sum}.
Specifically, we have
\begin{equation}\label{eq:formula-risk-infimum-eps-2}
  \sum_{n\in\Ns} \left(1-\frac{\varepsilon}{\mu_n}\right)_+^2
  = - \int_{\varepsilon}^{\infty} \left(1-\frac{\varepsilon}{\dumvar}\right)^2 \diff \ecv(\dumvar),
  \quad \text{for all } \varepsilon>0.
\end{equation}
Performing integration by parts in the right-hand side of \eqref{eq:formula-risk-infimum-eps-2}, we obtain
\begin{equation}\label{eq:formula-risk-infimum-eps-3}
  - \int_{\varepsilon}^{\infty} \left(1-\frac{\varepsilon}{\dumvar}\right)^2 \diff \ecv(\dumvar)
  = \int_{\varepsilon}^{\infty} \frac{2\varepsilon}{\dumvar^2} \left(1-\frac{\varepsilon}{\dumvar}\right) \ecv(\dumvar)\diff \dumvar
  = 2\varepsilon \, \left(\intk{2}(\varepsilon)-\intk{3}(\varepsilon) \, \varepsilon \right),
\end{equation}
for all $\varepsilon>0$.
Therefore, by defining an auxiliary function $\Phi \colon \Rp \to \Rp$ according to
\begin{equation}\label{eq:definition-Phi-RL}
  \Phi (\varepsilon)
  \coloneqq  2\sigma^2 \varepsilon \,  \left(\intk{2}(\varepsilon)-\intk{3}(\varepsilon) \, \varepsilon \right)+ \varepsilon^2,
  \quad \text{for all } \varepsilon >0,
\end{equation}
we can summarize the findings of \eqref{eq:formula-risk-infimum-eps}, \eqref{eq:formula-risk-infimum-eps-2}, and \eqref{eq:formula-risk-infimum-eps-3} with the formula
\begin{equation}\label{eq:reformulation-linear-risk-integral}
   R_{\sigma}^L \left(\ellip\right) = \newinf_{\varepsilon>0} \Phi(\varepsilon).
\end{equation}
One easily verifies, from its definition, 
that $\Phi$ is differentiable. 
In order to compute its derivative, we first observe that the relations
\begin{equation}\label{eq:derivative-integrals}
  \intk{2}'(\varepsilon) = -\frac{\ecv(\varepsilon)}{\varepsilon^2}
  \quadtext{and}
  \intk{3}'(\varepsilon) = -\frac{\ecv(\varepsilon)}{\varepsilon^3},
  \quad \text{for all }\varepsilon >0,
\end{equation}
directly follow from the definition of the type-$2$ and type-$3$ integrals $\intk{2}$ and $\intk{3}$ in \eqref{eq:main-definition-integrals-order-k}.
In particular, we derive 
\begin{equation*}
  \left(\intk{2}(\varepsilon)-\intk{3}(\varepsilon) \, \varepsilon \right)'
  = -\frac{\ecv(\varepsilon)}{\varepsilon^2} + \frac{\ecv(\varepsilon)}{\varepsilon^3}\varepsilon - \intk{3}(\varepsilon)
  = - \intk{3}(\varepsilon),
    \quad \text{for all } \varepsilon >0,
\end{equation*}
from which the derivative of $\Phi$ is deduced as follows:
\begin{align}
  \Phi'(\varepsilon)
  &=2\sigma^2 \left(\intk{2}(\varepsilon)-\intk{3}(\varepsilon) \, \varepsilon \right)
  - 2\sigma^2 \intk{3}(\varepsilon) \, \varepsilon + 2\varepsilon \label{eq:derivative-of-g-1} \\
  &=2\sigma^2 \left(\intk{2}(\varepsilon)-2\intk{3}(\varepsilon) \, \varepsilon \right)
  + 2\varepsilon,\label{eq:derivative-of-g-2}
\end{align}
for all $\varepsilon >0$.
The critical points of $\Phi$, that is, 
the values of $\varepsilon$ for which $\Phi'(\varepsilon)=0$, 
are precisely the solutions of  \eqref{eq:definition-critical-radius}.
We now prove that \eqref{eq:definition-critical-radius} has a solution and that this solution is unique.
To this end, we introduce a second auxiliary function  $\Psi \colon \Rp \to \R$ according to  
\begin{equation}\label{eq:definition-Psi-fct}
  \Psi(\varepsilon) 
  \coloneqq  2 \intk{3}(\varepsilon) - \frac{\intk{2}(\varepsilon)}{\varepsilon},
  \quad \text{for all } \varepsilon >0,
\end{equation}
and study its variations.
The function $\Psi$ is  also differentiable;  
its derivative can be explicitly derived---with the help of \eqref{eq:derivative-integrals}---according to
\begin{equation*}
  \Psi'(\varepsilon)
  =\left(2 \intk{3}(\varepsilon) - \frac{\intk{2}(\varepsilon)}{\varepsilon}\right)'
  = \frac{\intk{2}(\varepsilon)}{\varepsilon^2} - \frac{\ecv(\varepsilon)}{\varepsilon^3},
    \quad \text{for all } \varepsilon >0.
\end{equation*}
Recalling the definition of $\intk{2}$ in \eqref{eq:main-definition-integrals-order-k} and using that $\ecv$ is non-increasing and not identically zero yields
\begin{equation}\label{eq:inequality-second-derivative}
    \intk{2}(\varepsilon) - \frac{\ecv(\varepsilon)}{\varepsilon}
    = \int_{\varepsilon}^{\infty}\frac{\ecv(\dumvar) - \ecv(\varepsilon)}{\dumvar^2} \diff \dumvar
    < 0
    \quadtext{and, a fortiori,}
    \Psi '(\varepsilon)<0,
\end{equation}
for all $\varepsilon \in (0, \mu_*)$.
Furthermore, choosing an arbitrary $\varepsilon_0\in (0, \mu_*)$, we get 
\begin{align*}
  \varepsilon^2\, \Psi'(\varepsilon) = 
  \intk{2}(\varepsilon) - \frac{\ecv(\varepsilon)}{\varepsilon}
  &= \int_{\varepsilon}^{\varepsilon_0}\frac{\ecv(\dumvar)}{\dumvar^2} \diff \dumvar + \intk{2}(\varepsilon_0) - \frac{\ecv(\varepsilon)}{\varepsilon}\\
  &\leq \ecv (\varepsilon)\left(\frac{1}{\varepsilon} - \frac{1}{\varepsilon_0} \right) + \intk{2}(\varepsilon_0) - \frac{\ecv(\varepsilon)}{\varepsilon}
  \leq \intk{2}(\varepsilon_0) - \frac{\ecv(\varepsilon_0)}{\varepsilon_0},
\end{align*}
for all $\varepsilon\in (0, \varepsilon_0)$.
Therefore,
setting $c_0 \coloneqq \ecv(\varepsilon_0) / \varepsilon_0 - \intk{2}(\varepsilon_0)$, which is positive by \eqref{eq:inequality-second-derivative}, 
we get the bound $\Psi'(\varepsilon) \leq -c_0/\varepsilon^2$, for all $\varepsilon\in (0, \varepsilon_0)$.
Integrating this bound between $\varepsilon$ and $\varepsilon_0$, we get 
\begin{equation}\label{eq:LB-on-Psi}
  \Psi (\varepsilon) 
  \geq \Psi (\varepsilon_0) + c_0 \left(\varepsilon^{-1}-\varepsilon^{-1}_0\right),
  \quad \text{for all } \varepsilon >0.
\end{equation}
The right-hand side of \eqref{eq:LB-on-Psi} tends to $\infty$ when $\varepsilon\to 0$, and thus so does $\Psi(\varepsilon)$.
We have proven that the function $\Psi$ is continuous strictly decreasing on $(0, \mu_*)$, with respective limits $\infty$ and $0$ as $\varepsilon \to 0$ and $\varepsilon \to \mu_*$.
The intermediate value theorem \cite[Theorem~4.23]{rudin1953principles} allows us to conclude about the existence and uniqueness of the solution of $\Psi(\varepsilon) = 1/\sigma^2$.
Let us call $\varepsilon_{\sigma}$ this unique solution.
Additionally, from \eqref{eq:derivative-of-g-1}--\eqref{eq:derivative-of-g-2} and \eqref{eq:definition-Psi-fct},
we get $\Phi'(\varepsilon) = 2\varepsilon(1-\sigma^2\Psi(\varepsilon))$, for all $\varepsilon>0$,
so that we can also conclude that $\varepsilon_{\sigma}$ is the unique critical point (and, in fact, the unique minimizer as $\Psi$  is decreasing) of $\Phi$.
We finish the proof of Theorem~\ref{thm:linear-risk-formula} by using successively \eqref{eq:reformulation-linear-risk-integral} and $\Phi'(\varepsilon_{\sigma})=0$ to prove \eqref{eq:linear-risk-closed-form} as follows
\begin{align}
   R_{\sigma}^L \left(\ellip\right)
   = \newinf_{\varepsilon>0} \Phi(\varepsilon)
   = \Phi(\varepsilon_{\sigma})
   &= 2\sigma^2 \varepsilon_{\sigma} \left(\intk{2}(\varepsilon_{\sigma})-\intk{3}(\varepsilon_{\sigma}) \, \varepsilon_{\sigma} \right)+ \varepsilon_{\sigma}^2 \label{eq:last-step-proof-RL-1} \\
   &= \sigma^2 \varepsilon_{\sigma} \intk{2}(\varepsilon_{\sigma}) + \sigma^2 \varepsilon_{\sigma}^2 \left( \frac{\intk{2}(\varepsilon_{\sigma})}{\varepsilon_{\sigma}} - 2 \intk{3}(\varepsilon_{\sigma}) \right) + \varepsilon_{\sigma}^2 \nonumber \\
   &= \sigma^2 \varepsilon_{\sigma} \intk{2}(\varepsilon_{\sigma}) + \frac{\varepsilon_{\sigma}}{2}\Phi'(\varepsilon_{\sigma}) 
   = \sigma^2 \varepsilon_{\sigma} \intk{2}(\varepsilon_{\sigma}).\label{eq:last-step-proof-RL-2}
\end{align}

\subsection{Proof of Theorem~\ref{thm:risk-linear-2-nonlinear}}\label{sec:proof-theorem-risk-linear-2-nonlinear}

Let us first call $r_\sigma$ the right-hand side of  \eqref{eq:non-asymp-ratio-lin-non-lin-risks} with a leading constant of $2\sqrt{2} $ instead of $4\sqrt{2}$, that is, we set
\begin{equation}\label{eq:nvdksjnkenljkqbrkjetre}
    r_\sigma \coloneqq 
     \frac{2 \sqrt{2} \,  \sigma}{\varepsilon_\sigma}\sqrt{W \left(\left(\frac{\left(1+\sqrt{3}\right) \mu_*^2 \, \varepsilon_\sigma}{ \sqrt{2} \, \sigma R^L_{\sigma}\left(\ellip\right)}\right)^2\right)},
    \quad \text{for all } \sigma>0.
\end{equation}
We will adapt the classical proof of Pinsker's asymptotic minimaxity theorem (see, e.g., \cite[Section 5.3]{johnstone2019estimation} and \cite[Chapter~3]{tsybakovIntroductionNonparametricEstimation2009}) by keeping track of the second-order terms.
Since the infimum defining the non-linear minimax risk in \eqref{eq:def-non-linear-risk} is taken over a larger set than the one in \eqref{eq:def-linear-risk},
we must have $ R_{\sigma} \left(\ellip\right)\leq R_{\sigma}^L\left(\ellip\right)$, for all $\sigma>0$.
We are thus left to show
\begin{equation}\label{eq:target-rsigma-bound}
    R_{\sigma} \left(\ellip\right)
    \geq  R_{\sigma}^L\left(\ellip\right)\left(1-2 r_\sigma\right),
    \quad \text{for all }\sigma >0.
\end{equation}
The bound holds trivially if $r_\sigma \geq 1/2$.
Therefore, in what follow, we fix $\sigma>0$ and may assume, without loss of generality, that $r_\sigma \in (0, 1/2)$.

To establish \eqref{eq:target-rsigma-bound}, we follow a variational approach and apply the Bayes-minimax theorem (see, e.g., \cite[Theorem~4.12]{johnstone2019estimation} and the discussion thereafter) to write the minimax risk according to
\begin{equation*}
      R_{\sigma} \left(\ellip\right)
    = \sup_{\text{supp}(\nu) \, \subseteq \, \ellip} B_\sigma(\nu),
\end{equation*}
where the supremum is taken over all probability measures $\nu$ supported in $\ellip$ and $B_\sigma(\nu)$ denotes the Bayes risk for $\nu$, that is,
\begin{equation}\label{eq:def-bayes-risk}
  B_\sigma(\nu)
  \coloneqq \newinf_{\hat x_\sigma} B_\sigma\left(\hat x_\sigma, \nu\right)
  \quadtext{where}
  B_\sigma\left(\hat x_\sigma, \nu\right) \coloneqq \mathbb{E}_{x\sim \nu}\left[\textnormal{MSE}_{\sigma}  \left(x, \hat x_\sigma; \ellip\right) \right].
\end{equation}
Therefore, it is sufficient to show that 
there exists a probability measure $\nu_\sigma$ supported on $\ellip$ satisfying
\begin{equation}\label{eq:nzkajrebnkjbzhbqhqwwdsza}
    B_\sigma (\nu_\sigma) 
    \geq   R_{\sigma}^L\left(\ellip\right)\left(1-2 r_\sigma\right).
\end{equation}

In view of constructing such a probability measure $\nu_\sigma$,
let us first introduce a Gaussian prior $\nu_\sigma^G$, supported on $\ell^2(\mathbb{N}^*)$, with diagonal covariance operator. 
Specifically, we set 
\begin{equation}\label{eq:dvnjknknfjke}
    \nu_\sigma^G \coloneqq  \bigotimes_{n\in\Ns} \nu_{\sigma,n}^G,
\end{equation}
where $\otimes$ denotes the tensor product of measures and $\nu_{\sigma,n}^G$ is a Gaussian $\mathcal{N}\left(0, (1-r_\sigma) \,  \tau_{\sigma,n}^2 \right)$, with 
\begin{equation}\label{eq:def-tau-variances}
  \tau_{\sigma,n} \coloneqq 
  \sigma \sqrt{\left(\frac{\mu_n}{\varepsilon_\sigma}-1\right)_+},
  \quad \text{for all } n\in \Ns.
\end{equation}
The assumption $\mu_n\to 0$ (as $n\to\infty$) guarantees that there are only finitely many non-zero $\tau_{\sigma,n}$.
Our candidate for the target probability measure $\nu_\sigma$ is obtained by conditioning $\nu_\sigma^G$ to the ellipsoid $\ellip$, i.e., 
\begin{equation}\label{eq:znrkaegqhkjbazhrjsbghjvzeszergerge}
    \nu_\sigma (A) = \frac{\nu_\sigma^G \left(A\cap \ellip \right)}{\nu_\sigma^G \left(\ellip \right)}, 
    \quad \text{for all measurable sets } A.
\end{equation}
By definition, $\nu_\sigma$ is supported on $\ellip$. 
We next proceed to show that (\ref{eq:nzkajrebnkjbzhbqhqwwdsza}) is satisfied with the choice \eqref{eq:znrkaegqhkjbazhrjsbghjvzeszergerge}.

To this end, we begin with a few useful observations.
First, it is a standard result that the Bayes risk for the Gaussian probability measure $\nu_\sigma^G$ is given by the sum of univariate estimates (see, e.g.,  the discussion in \cite[Chapter~2.3]{johnstone2019estimation} or \cite[Lemma~3.4]{tsybakovIntroductionNonparametricEstimation2009}, \cite[Lemma~5.1.13 and Example~5.1.14]{Lehmann1998}) according to
\begin{equation}\label{eq:klsdnsssdffnkqma22}
  B_\sigma\left(\nu_\sigma^G \right)
  = \sum_{n\in\Ns} \frac{\sigma^2 \, (1-r_\sigma) \, \tau_{\sigma, n}^2 }{\sigma^2+(1-r_\sigma)\, \tau_{\sigma, n}^2}.
\end{equation}
Next,
using a Stieltjes integral representation and performing integration by parts (similarly to \eqref{eq:formula-risk-infimum-eps-2}--\eqref{eq:formula-risk-infimum-eps-3}),
we get the relation
\begin{align}
  \sum_{n \colon \mu_n>\, 0} \frac{\sigma^2}{\varepsilon_\sigma\mu_n}\left(1- \frac{\varepsilon_\sigma}{\mu_n}\right)_+
  &= - \int_{\varepsilon_\sigma}^{\infty} \frac{\sigma^2}{\varepsilon_\sigma \dumvar}\left(1- \frac{\varepsilon_\sigma}{\dumvar}\right) \diff \ecv(\dumvar)\label{eq:klsdnsssdffnkqma0}\\
  &= \sigma^2 \int_{\varepsilon_\sigma}^{\infty} \ecv(\dumvar) \left(\frac{2}{\dumvar^3}- \frac{1}{\varepsilon_\sigma \dumvar^2}\right) \diff \dumvar \nonumber \\
  &= \sigma^2\left(2 \intk{3}(\varepsilon_\sigma) - \frac{\intk{2}(\varepsilon_\sigma)}{\varepsilon_\sigma}\right)
  = 1,\label{eq:klsdnsssdffnkqma33}
\end{align}
where the last step follows from the identity \eqref{eq:definition-critical-radius}.
Combining \eqref{eq:def-tau-variances} with \eqref{eq:klsdnsssdffnkqma0}--\eqref{eq:klsdnsssdffnkqma33} yields 
\begin{equation}\label{eq:klsdnsssdffnkqma}
  \sum_{n \colon \mu_n>\, 0} \frac{\tau_{\sigma,n}^2}{\mu_n^2}
  = \sum_{n \colon \mu_n>\, 0} \frac{\sigma^2}{\varepsilon_\sigma\mu_n}\left(1- \frac{\varepsilon_\sigma}{\mu_n}\right)_+
  =1.
\end{equation}
Likewise, we have
\begin{equation}\label{eq:jkkjhbfhdshhhfdhsjjsksskk}
   \sum_{n \colon \mu_n>\, 0} \left(1- \frac{\varepsilon_\sigma}{\mu_n}\right)_+
  = - \int_{\varepsilon_\sigma}^{\infty} \left(1- \frac{\varepsilon_\sigma}{\dumvar}\right) \diff \ecv(\dumvar)
  = \varepsilon_\sigma \int_{\varepsilon_\sigma}^{\infty} \frac{\ecv(\dumvar)}{\dumvar^2} \diff \dumvar
  = \varepsilon_\sigma \intk{2}(\varepsilon_\sigma).
\end{equation}
Multiplying both sides of \eqref{eq:jkkjhbfhdshhhfdhsjjsksskk} by $\sigma^2$ and using the identity \eqref{eq:linear-risk-closed-form} yields 
\begin{equation}\label{eq:klsdnsssdffnkqma11}
  R_{\sigma}^L\left(\ellip\right) 
  = \sigma^2 \sum_{n \colon \mu_n>\, 0} \left(1- \frac{\varepsilon_\sigma}{\mu_n}\right)_+
  = \sum_{n\in\Ns} \frac{\sigma^2 \, \tau_{\sigma, n}^2 }{\sigma^2+\tau_{\sigma,n}^2},
\end{equation}
where the last step directly follows by substituting the definition \eqref{eq:def-tau-variances} of $\tau_{\sigma, n}$.
From \eqref{eq:klsdnsssdffnkqma22} and \eqref{eq:klsdnsssdffnkqma11}, the Bayes risk for the probability measure $\nu_\sigma^G$ defined in (\ref{eq:dvnjknknfjke}) provides an upper bound on the linear minimax risk according to 
\begin{align}
  B_\sigma\left(\nu_\sigma^G \right)
  &= \sum_{n\in\Ns} \frac{\sigma^2 \, (1-r_\sigma) \, \tau_{\sigma, n}^2 }{\sigma^2+(1-r_\sigma)\, \tau_{\sigma, n}^2} \label{eq:nvkjnrjekajqkbvjanjkjjj1}\\
  &\geq (1-r_\sigma)\,\sum_{n\in\Ns} \frac{\sigma^2 \, \tau_{\sigma, n}^2 }{\sigma^2+\tau_{\sigma,n}^2} 
  =  (1-r_\sigma)\,R_{\sigma}^L\left(\ellip\right).\label{eq:nvkjnrjekajqkbvjanjkjjj2}
\end{align}
Moreover, from the definition \eqref{eq:def-bayes-risk} of the Bayes risk, we directly get that the inequality 
\begin{equation}\label{eq:dvjerkgntrjyehnkjyhnezzzzzz}
  B_\sigma \left(\nu_\sigma^G \right)
  \leq B_\sigma\left(\hat x_{\sigma}, \nu_\sigma^G \right), 
  \quadtext{holds for all} \hat x_{\sigma}.
\end{equation}
From the properties of the conditional expectation (see, e.g., \cite[Chapter~11.2.1]{legallprobaint2022}), 
we can upper-bound the right-hand side of \eqref{eq:dvjerkgntrjyehnkjyhnezzzzzz} according to
\begin{align}
    B_\sigma\left(\hat x_{\sigma}, \nu_\sigma^G \right)
    &= \mathbb{E}_{x \sim \nu_\sigma^G} \left[ \mathbb{E}_{{y\sim x}}\left[\left\|\hat x_{\sigma} (y)-x \right\|^2_2\right] \right] \label{eq:jezhaorheqjsfejkadlzeirkuhgdebz1}\\
    &=  \mathbb{E}_{x \sim \nu_\sigma^G} \left[ \mathbb{E}_{{y\sim x}} \left[\left\| \hat x_{\sigma}(y) - x \right\|^2_2 \right] \mid x \in \ellip \right] \, \nu_\sigma^G \left(\ellip\right)\nonumber\\
    &\quad + \mathbb{E}_{x \sim \nu_\sigma^G} \left[ \mathbb{E}_{{y\sim x}} \left[\left\| \hat x_{\sigma}(y) - x \right\|^2_2 \right] \mid x \in  \ellip^c \right] \, \nu_\sigma^G \left(\ellip^c\right) \nonumber\\
    &\leq \mathbb{E}_{x \sim \nu_\sigma} \left[ \mathbb{E}_{{y\sim x}} \left[\left\| \hat x_{\sigma}(y) - x \right\|^2_2 \right] \right] \nonumber\\
    &\quad + \mathbb{E}_{x \sim \nu_\sigma^G} \left[ \mathbb{E}_{{y\sim x}} \left[\left\| \hat x_{\sigma}(y) - x \right\|^2_2 \right] \mid x \in  \ellip^c \right]\, \nu_\sigma^G \left(\ellip^c\right),\label{eq:jezhaorheqjsfejkadlzeirkuhgdebz2}
\end{align}
for all estimator $\hat x_{\sigma}$, and where we used, in the last step, that $\nu_\sigma$ has been defined in (\ref{eq:znrkaegqhkjbazhrjsbghjvzeszergerge}) to be $\nu_\sigma^G$ conditioned to $\ellip$.
We now choose $\hat x_{\sigma}$ to be the Bayes estimator for $\nu_\sigma$, that is, the estimator
\begin{equation}\label{eq:vnqkjnzerkjqbkjmdlpsp}
  \hat x_{\sigma}^B
  \colon y \longmapsto \mathbb{E}_{x \sim \nu_\sigma} \left[x  \mid  y \right]. 
\end{equation} 
The estimator $\hat x_{\sigma}^B$ attains the infimum in \eqref{eq:def-bayes-risk}; this is a consequence of the properties of the conditional expectation---see, e.g., \cite[Chapter~11.2.3]{legallprobaint2022}.
In particular, the first term in the upper bound (\ref{eq:jezhaorheqjsfejkadlzeirkuhgdebz1})--(\ref{eq:jezhaorheqjsfejkadlzeirkuhgdebz2}) simply becomes the Bayes risk for $\nu_\sigma$, i.e.,
\begin{equation}\label{eq:nsfdkjnrbekjzngqjkndsfs}
    B_\sigma(\nu_\sigma)
    = B_\sigma\left(\hat x^B_\sigma, \nu_\sigma\right)
    =\mathbb{E}_{x \sim \nu_\sigma} \left[ \mathbb{E}_{{y\sim x}} \left[\left\| \hat x^B_\sigma(y) - x \right\|^2_2 \right] \right].
\end{equation}
Assembling (\ref{eq:nvkjnrjekajqkbvjanjkjjj1})--(\ref{eq:nvkjnrjekajqkbvjanjkjjj2}), (\ref{eq:dvjerkgntrjyehnkjyhnezzzzzz}), (\ref{eq:jezhaorheqjsfejkadlzeirkuhgdebz1})--(\ref{eq:jezhaorheqjsfejkadlzeirkuhgdebz2}),
and (\ref{eq:nsfdkjnrbekjzngqjkndsfs}), we have proven so far that 
\begin{equation}\label{eq:kenzfbjhbvhjaebqhjvfeqfff}
    (1-r_\sigma) R_{\sigma}^L\left(\ellip\right)
    \leq  B_\sigma(\nu_\sigma)  +  \mathbb{E}_{x \sim \nu_\sigma^G} \left[ \mathbb{E}_{{y\sim x}} \left[\left\| \hat x_{\sigma}^B(y) - x \right\|^2_2 \right] \mid x \in  \ellip^c \right] \nu_\sigma^G \left(\ellip^c\right).
\end{equation}
In order to obtain the desired bound (\ref{eq:nzkajrebnkjbzhbqhqwwdsza}), we are left to prove that the second term in the right-hand side of (\ref{eq:kenzfbjhbvhjaebqhjvfeqfff}) is upper-bounded by $r_\sigma R_{\sigma}^L(\ellip)$, 
which we proceed to do next.

From the standard inequality $(a+b)^2\leq 2(a^2+b^2)$ on the real line, it is sufficient to bound individually both
\begin{equation}\label{eq:nfqekvnklnzkjekklllqqqq}
      \mathbb{E}_{x \sim \nu_\sigma^G} \left[ \mathbb{E}_{{y\sim x}} \left[\left\| \hat x_{\sigma}^B(y) \right\|^2_2\right]\mid x \in  \ellip^c \right] \nu_\sigma^G \left(\ellip^c\right) 
      \quad \text{and} \quad
      \mathbb{E}_{x \sim \nu_\sigma^G} \left[\left\| x \right\|^2_2 \mid x \in  \ellip^c \right] \nu_\sigma^G \left(\ellip^c\right).
\end{equation}
Recalling the definition (\ref{eq:vnqkjnzerkjqbkjmdlpsp}) of $\hat x_{\sigma}^B$ and applying Jensen's inequality, we obtain 
\begin{equation}\label{eq:jkvbavbebrjhbhjerbvzknjh11}
    \left\| \hat x_{\sigma}^B(y) \right\|^2_2
    \leq \mathbb{E}_{x \sim \nu_\sigma} \left[\left\|x \right\|^2_2 \mid  y \right], 
      \quad \text{for all }  y \in \R^{\Ns}. 
\end{equation}
Furthermore, by definition of the ellipsoid $\ellip$, we have 
\begin{equation}\label{eq:jkvbavbebrjhbhjerbvzknjh12}
    \left\|x \right\|^2_2
    = \sum_{n \colon \mu_n>0} \mu_n^2 \frac{x_n^2}{\mu_n^2}
    \leq \mu_*^2 \sum_{n \colon \mu_n>0}  \frac{x_n^2}{\mu_n^2}
    \leq \mu_*^2,
    \quad \text{for all }x \in \ellip.
\end{equation}
Combining (\ref{eq:jkvbavbebrjhbhjerbvzknjh11}) with (\ref{eq:jkvbavbebrjhbhjerbvzknjh12}), we get 
\begin{equation}\label{eq:ejrkntkzlnskjnbzgrbtryt}
    \mathbb{E}_{x \sim \nu_\sigma^G} \left[ \mathbb{E}_{{y\sim x}} \left[\left\| \hat x_{\sigma}^B(y) \right\|^2_2\right] \mid x \in  \ellip^c \right]\nu_\sigma^G \left(\ellip^c\right) 
    \leq  \mu_*^2\, \nu_\sigma^G \left(\ellip^c\right).
\end{equation}
This provides a bound for the first term in (\ref{eq:nfqekvnklnzkjekklllqqqq}).
In order to bound the second term in (\ref{eq:nfqekvnklnzkjekklllqqqq}), we first recall that 
$\nu_{\sigma,n}^G $ is, by definition, a centered Gaussian probability measure with variance $(1-r_\sigma) \, \tau_{\sigma,n}^2$, so that its fourth moment satisfies
\begin{equation*}
    \mathbb{E}_{x_n \sim \nu_{\sigma,n}^G } \left[x_n^4\right] 
    = 3 \, (1-r_\sigma)^2 \, \tau_{\sigma,n}^4
    \leq 3 \, \tau_{\sigma,n}^4,
    \quad \text{for all }  n\in\Ns.
\end{equation*}
In particular, upon application of Cauchy-Schwarz's inequality, 
we get the upper bound 
\begin{align}
    \mathbb{E}_{x \sim \nu_\sigma^G} \left[\left\| x \right\|^2_2 \mid x \in \ellip^c \right]
    &= \sum_{n\in\Ns} \mathbb{E}_{x \sim \nu_\sigma^G} \left[x_n^2 \mid x \in \ellip^c \right]\label{eq:hjkojezkjklrea1} \\
    &\leq \sum_{n\in\Ns} \sqrt{\mathbb{E}_{x \sim \nu_\sigma^G} \left[x_n^4  \right]\, \mathbb{E}_{x \sim \nu_\sigma^G} \left[ \nu_\sigma^G \left(\ellip^c\right)^{-2}\mathbbm{1}_{\{x\in\ellip^c\}}\right]}\nonumber\\
    &\leq \sqrt{\frac{3}{\nu_\sigma^G \left(\ellip^c\right)}} \, \sum_{n\in\Ns} \tau_{\sigma,n}^2.\label{eq:hjkojezkjklrea2} 
\end{align}
We 
can further upper-bound \eqref{eq:hjkojezkjklrea1}--\eqref{eq:hjkojezkjklrea2} according to 
\begin{equation}\label{eq:hjkojezkjklrea3} 
    \sqrt{\frac{3}{\nu_\sigma^G \left(\ellip^c\right)}} \, \sum_{n\in\Ns} \tau_{\sigma,n}^2
    \leq \mu_*^2\sqrt{\frac{3}{\nu_\sigma^G \left(\ellip^c\right)}} \, \sum_{n \colon \mu_n>0} \frac{\tau_{\sigma, n}^2}{\mu_n^2} 
    = \mu_*^2\sqrt{\frac{3}{\nu_\sigma^G \left(\ellip^c\right)}},
\end{equation}
where we also $\tau_{\sigma,n}=0$ whenever $\mu_n=0$ in the first step, and \eqref{eq:klsdnsssdffnkqma} in the second step.
Putting (\ref{eq:kenzfbjhbvhjaebqhjvfeqfff}), (\ref{eq:ejrkntkzlnskjnbzgrbtryt}), (\ref{eq:hjkojezkjklrea1})--(\ref{eq:hjkojezkjklrea2}), and (\ref{eq:hjkojezkjklrea3}) together, we get 
\begin{equation}\label{eq:zenkjrjkesnwknaezmlkrfkesdgnj}
(1-r_\sigma) \,  R_{\sigma}^L\left(\ellip\right) \leq B_\sigma(\nu_\sigma) + 2\left(1+\sqrt{3}\right)\mu_*^2  \sqrt{\nu_\sigma^G \left(\ellip^c\right)}.
\end{equation}

The probability $\nu_\sigma^G (\ellip^c)$ can be bounded via concentration inequalities.
Specifically, we first use \eqref{eq:klsdnsssdffnkqma} to obtain
\begin{equation}\label{eq:njksvnkjfjnjnnnncjhslqlmmm}
    \sum_{n \colon \mu_n>0} \frac{\mathbb{E}_{x_n \sim \nu_{\sigma,n}^G} \left[x_n^2 \right]}{\mu_n^2}
    = \sum_{n \colon \mu_n>0} \frac{(1-r_\sigma) \,  \tau_{\sigma, n}^2}{\mu_n^2}
    = 1-r_\sigma,
\end{equation}
which then allows us to rewrite the complement of $\ellip$ according to
\begin{equation*}
    \ellip^c
    = \left\{ x \in \ell^2(\Ns) \mid \sum_{n \colon \mu_n>0} \frac{x_n^2- \mathbb{E}_{x_n \sim \nu_{\sigma, n}^G} \left[x_n^2 \right]}{\mu_n^2} > r_\sigma \right\}.
\end{equation*}
By construction of the probability measure $\nu_{\sigma, n}^G$, we can set $x_n^2 = \gamma_n \, \mu_n^2 \,  Z_n^2$, for all $n\in\Ns$,
where $\{Z_n\}_{n \in \mathbb{N}^*}$ are independent standard Gaussian random variables and 
\begin{equation}\label{eq:nksdjjjjbshvsggggggsdfvvdsdv}
    \gamma=\{\gamma_n\}_{n \in \mathbb{N}^*}
    \quad \text{has components} \quad
    \gamma_n \coloneqq 
    \begin{dcases}
      \frac{(1-r_\sigma)\, \tau_{\sigma, n}^2}{\mu_n^2}, \quad &\text{if } \mu_n>0,\\ 
      0, \quad &\text{otherwise.}
    \end{dcases}
\end{equation}
Note that, by the definition (\ref{eq:def-tau-variances}) of $\tau_{\sigma, n}$ and the assumption $\mu_n\to 0$, as $n\to\infty$, we are guaranteed that $\gamma$ has only finitely many non-zero components.
Consequently, the random variable $ \sum_{n\in\Ns} \gamma_n Z_n^2$, follows a weighted $\chi^2$ distribution.
Therefore, by the concentration inequality for weighted $\chi^2$ variables given (see, e.g., \cite[Chapter~2.8~(iv)]{johnstone2019estimation} and, more generally, the concentration theory for Lipschitz functions of Gaussian random variables \cite[Theorem~2.26]{wainwrightHighDimensionalStatistics2019}), we get 
\begin{equation*}
    \nu_\sigma^G \left(\ellip^c\right)
    = \nu_\sigma^G \left(\sum_{n\in\Ns} \gamma_n \left(Z_n^2-1\right)>r_\sigma\right)
    \leq \exp \left(-\frac{r_\sigma^2}{32 \,  \|\gamma\|_1 \, \|\gamma\|_\infty}\right).
\end{equation*}
This upper bound on $\nu_\sigma^G (\ellip^c)$ can be simplified by employing \eqref{eq:njksvnkjfjnjnnnncjhslqlmmm} with \eqref{eq:nksdjjjjbshvsggggggsdfvvdsdv} to bound $\|\gamma\|_1$, and \eqref{eq:def-tau-variances} with \eqref{eq:nksdjjjjbshvsggggggsdfvvdsdv} to bound $\|\gamma\|_\infty$, according to
\begin{equation*}
    \|\gamma\|_1 
    = \sum_{n \colon \mu_n>0} \frac{(1-r_\sigma)\, \tau_{\sigma,n }^2}{\mu_n^2}
    \leq 1
    \quad \text{and} \quad
    \|\gamma\|_\infty
    \leq \max_{n \colon \mu_n>0} \left\{ \frac{\sigma^2\varepsilon_\sigma }{\mu_n \varepsilon_\sigma^2}\left(1-\frac{\varepsilon_\sigma}{\mu_n}\right)_+ \right\}
    \leq \frac{\sigma^2}{4\, \varepsilon_\sigma^2},
\end{equation*}
where we have used, in the last step, that $a(1-a)\leq 1/4$ holds for all $a\in \mathbb{R}$.
We thus obtain
\begin{equation}\label{eq:snvkqjaknqkjbezjbdcqkll}
    \nu_\sigma^G \left(\ellip^c\right)
    \leq \exp\left(-\frac{r_\sigma^2\, \varepsilon_\sigma^2}{8\, \sigma^{2}}   \right).
\end{equation}
Therefore, combining (\ref{eq:zenkjrjkesnwknaezmlkrfkesdgnj}) and (\ref{eq:snvkqjaknqkjbezjbdcqkll}), we have
\begin{equation}\label{eq:nfkjanjknezkjsnkjgssww}
(1-r_\sigma) \,  R_{\sigma}^L\left(\ellip\right) \leq B_\sigma(\nu_\sigma) + 2\left(1+\sqrt{3}\right)\mu_*^2  \exp\left(-\frac{r_\sigma^2\, \varepsilon_\sigma^2}{16\, \sigma^{2}}   \right).
\end{equation}
We can rewrite the definition \eqref{eq:nvdksjnkenljkqbrkjetre} of $r_\sigma$ according to
\begin{equation*}
  \frac{r_\sigma^2\, \varepsilon_\sigma^2}{8\, \sigma^{2}} 
  = W \left(\left(\frac{\left(1+\sqrt{3}\right) \mu_*^2 \, \varepsilon_\sigma}{ \sqrt{2} \, \sigma R^L_{\sigma}\left(\ellip\right)}\right)^2\right),
\end{equation*}
from which the defining property $x=W(x) \exp(W(x))$, for all $x \geq 0$, of the Lambert W function (see, e.g., \cite{corlessLambertFunction1996}) gives
\begin{equation}\label{eq:Lambert-def-appli}
  \frac{r_\sigma^2\, \varepsilon_\sigma^2}{8\, \sigma^{2}} 
  \exp\left(\frac{r_\sigma^2\, \varepsilon_\sigma^2}{8\, \sigma^{2}}   \right)
  = \left(\frac{\left(1+\sqrt{3}\right) \mu_*^2 \, \varepsilon_\sigma}{ \sqrt{2} \, \sigma R^L_{\sigma}\left(\ellip\right)}\right)^2.
\end{equation}
Rearranging terms in \eqref{eq:Lambert-def-appli} yields
\begin{equation}\label{eq:Lambert-deduction}
  2\left(1+\sqrt{3}\right)\mu_*^2 
  \exp\left(-\frac{r_\sigma^2\, \varepsilon_\sigma^2}{16\, \sigma^{2}}   \right)
  = r_\sigma  R^L_{\sigma}\left(\ellip\right).
\end{equation}
Plugging \eqref{eq:Lambert-deduction} in \eqref{eq:nfkjanjknezkjsnkjgssww}, we obtain
\begin{equation*}
(1-r_\sigma) \,  R_{\sigma}^L\left(\ellip\right) \leq B_\sigma(\nu_\sigma) + r_\sigma  R^L_{\sigma}\left(\ellip\right),
\end{equation*}
from which the desired bound \eqref{eq:nzkajrebnkjbzhbqhqwwdsza}
follows, thereby finishing the proof of Theorem~\ref{thm:risk-linear-2-nonlinear}.

\subsection{Proof of Theorem~\ref{thm:bias-variance-decomp}}\label{sec:proof-lemma-RnL-vs-ME}

By Pinsker's asymptotic minimaxity  theorem, it is sufficient to prove the result for the linear minimax risk.
Before we begin, let us take $f$ as in (RC) and introduce its type-$1$, type-$2$, and type-$3$ integrals defined according to
\begin{equation}
  \tilde I_1(\varepsilon)
  \coloneqq \int_{\varepsilon}^{\infty} \frac{f(\dumvar)}{\dumvar} \diff \dumvar,
  \quad \tilde I_2(\varepsilon)
  \coloneqq \int_{\varepsilon}^{\infty} \frac{f(\dumvar)}{\dumvar^2} \diff \dumvar,
  \quadtext{and}
  \tilde I_3(\varepsilon)
  \coloneqq \int_{\varepsilon}^{\infty} \frac{f(\dumvar)}{\dumvar^3} \diff \dumvar,
\end{equation}
for all $\varepsilon > 0$.
Since the function $f$ satisfies $f(\varepsilon)\sim\ecv(\varepsilon)$ as $\varepsilon\to 0$,
we have, by Lemma~\ref{lem:abelian-integration} in Appendix~\ref{sec:asymp-equiv-integration},
\begin{equation}\label{eq:auxiliary-integral-binft}
  \tilde I_1(\varepsilon) \sim \intk{1}(\varepsilon),
  \quad  \tilde I_2(\varepsilon) \sim \intk{2}(\varepsilon),
  \quadtext{and}  \tilde I_3(\varepsilon) \sim \intk{3}(\varepsilon),
  \quad \text{as } \varepsilon \to 0.
\end{equation}

We first consider the case $b=0$.
We derive from the asymptotic equivalence \eqref{eq:auxiliary-integral-binft} for the type-$2$ integral $\intk{2}$ that
\begin{equation}\label{eq:IA-asympt-neg-binft-bis0}
  \lim_{\varepsilon\to 0} \frac{\intk{2}(\varepsilon)\, \varepsilon}{\ecv(\varepsilon)}
  = \lim_{\varepsilon\to 0}  \frac{\tilde I_2(\varepsilon)\, \varepsilon}{f(\varepsilon)}.
\end{equation}
Next, upon application of l'H\^{o}pital's rule to the functions $\tilde I_2$ and $\varepsilon\mapsto f(\varepsilon) \,  \varepsilon^{-1}$---which is justified by the fact that $f(\varepsilon) \,  \varepsilon^{-1} \to \infty$ as $\varepsilon\to 0$,
see \cite[Theorem~5.13]{rudin1953principles}---we get
\begin{equation}\label{eq:IA-asympt-neg-binft-bis1}
  \lim_{\varepsilon\to 0}  \frac{\tilde I_2(\varepsilon)\, \varepsilon}{f(\varepsilon)} 
  = \lim_{\varepsilon\to 0} \frac{\tilde I_2'(\varepsilon)}{f'(\varepsilon)/\varepsilon-f(\varepsilon)/\varepsilon^2}
  = \lim_{\varepsilon\to 0} \frac{f(\varepsilon)}{f(\varepsilon)-f'(\varepsilon)\, \varepsilon}.
\end{equation}
We further observe that (see \eqref{eq:chain-rule-elasticity}--\eqref{eq:application-lhopital-1} for an analogous derivation)
\begin{equation*}
  \rho(\log(\varepsilon^{-1}))
  = - \frac{f'(\varepsilon)\, \varepsilon}{f(\varepsilon)}
  \to b= 0,
  \quad \text{as } \varepsilon \to 0,
\end{equation*}
so that we have proven in \eqref{eq:IA-asympt-neg-binft-bis0} and \eqref{eq:IA-asympt-neg-binft-bis1} that 
\begin{equation}\label{eq:IA-asympt-neg-binft-bis}
  \lim_{\varepsilon\to 0} \frac{\intk{2}(\varepsilon)\, \varepsilon}{\ecv(\varepsilon)}
  = 1.
\end{equation}
A similar line of arguments yields 
\begin{equation}\label{eq:IA-asympt-neg-binft-ter}
  \lim_{\varepsilon\to 0} \frac{\intk{3}(\varepsilon)\, \varepsilon^2}{\ecv(\varepsilon)}
  = \lim_{\varepsilon\to 0} \frac{\tilde I_3'(\varepsilon)}{f'(\varepsilon)/\varepsilon^2-2f(\varepsilon)/\varepsilon^3}
  = \lim_{\varepsilon\to 0} \frac{1}{2+\rho(\log(\varepsilon^{-1}))}
  = \frac{1}{2}.
\end{equation}
Therefore, combining \eqref{eq:IA-asympt-neg-binft-bis} and \eqref{eq:IA-asympt-neg-binft-ter}, we get 
\begin{equation}\label{eq:case-b0-asymp-I2-I3}
  \intk{2}(\varepsilon)-\intk{3}(\varepsilon) \, \varepsilon
  \sim \frac{\ecv(\varepsilon)}{2\varepsilon},
  \quad\text{as } \varepsilon \to 0.
\end{equation}
Plugging \eqref{eq:case-b0-asymp-I2-I3} in \eqref{eq:linear-risk-integral} and applying Lemma~\ref{lem:abelian-infimum} (see Appendix~\ref{sec:asymp-equiv-integration})  yields 
\begin{equation*}
  R^L_{\sigma} \left(\ellip\right)
    \sim 
  \newinf_{\varepsilon > 0} \left\{  \sigma^2 \ecv(\varepsilon) + \varepsilon^2 \right\}
  \sim   \sigma^2 \ecv(\varepsilon_\sigma) + \varepsilon^2_\sigma,
  \quad\text{as } \sigma \to 0,
\end{equation*}
which is the desired result in the case $b=0$.

From now on, we may assume that $b\neq 0$. 
In this case, we make the following useful observation for later:
\begin{equation}\label{eq:IA-asympt-neg-binft}
  \lim_{\varepsilon\to 0} \frac{\intk{1}(\varepsilon)}{\ecv(\varepsilon)}
  = \lim_{\varepsilon\to 0}  \frac{\tilde I_1(\varepsilon)}{f(\varepsilon)}
  = \lim_{\varepsilon\to 0} \frac{\tilde I_1'(\varepsilon)}{f'(\varepsilon)}
  = \lim_{\varepsilon\to 0}  - \frac{f(\varepsilon)}{\varepsilon f'(\varepsilon)} 
  = \lim_{\varepsilon\to 0}  \frac{1}{\rho(\log(\varepsilon^{-1}))}
  = \frac{1}{b},
\end{equation}
where the first equality follows from \eqref{eq:auxiliary-integral-binft}, the second one is an application of l'H\^{o}pital's rule (note that $f(\varepsilon)\to\infty$, as $\varepsilon\to 0$, since $b\neq 0$), 
and the last step uses $\lim_{t\to\infty}\rho(t)=b$. 
In view of relating the right-hand side of \eqref{eq:linear-risk-integral} to the metric entropy,
we apply twice  Lemma~\ref{lem:integral-k-to-k+1}---first with $\typevar_1=2$ and $\typevar_2=3$, then with $\typevar_1=2$ and $\typevar_2=1$---to obtain
\begin{equation}\label{eq:integral-of-integral-of-integral}
  \intk{2}(\varepsilon)-\intk{3}(\varepsilon) \, \varepsilon
  = \varepsilon \int_{\varepsilon}^{\infty} \frac{\intk{2}(\dumvar)}{\dumvar^2}\diff \dumvar
  = \varepsilon \int_{\varepsilon}^{\infty}\frac{\intk{1}(\dumvar)}{\dumvar^3} - \left(\int_{\dumvar}^{\infty} \frac{\intk{1}(\dumvartwo)}{\dumvar^2 \,  \dumvartwo^2}\diff \dumvartwo \right)\diff \dumvar, 
  \quad\text{for all }\varepsilon>0.
\end{equation}
We will split the analysis into two cases depending on whether $b$ is finite.

If $b\in(0, \infty)$, since (RC) is satisfied by assumption, 
a direct application of Theorem~\ref{thm:metric-entropy-integral-form} guarantees that the metric entropy of $\ellip$ satisfies \eqref{eq:asymp-equivalence-ME-I1-first-order}.
By Lemma~\ref{lem:abelian-infimum} (see Appendix~\ref{sec:asymp-equiv-integration}), it is thus sufficient to show that 
\begin{equation}\label{eq:linear-risk-and-I1}
    R_{\sigma}^L \left(\ellip\right)
    \sim  \newinf_{\varepsilon>0} \left\{  \frac{2 \, b\, \sigma^2}{(b+1)(b+2)} \intk{1}(\varepsilon) + \varepsilon^2\right\}
    \sim  \frac{2 \, b\, \sigma^2}{(b+1)(b+2)} \intk{1}(\varepsilon_\sigma) + \varepsilon^2_\sigma,
    \quad \text{as } \sigma\to 0.
\end{equation}
Invoking Karamata's theorem (see \cite[Theorem~1.5.11]{binghamRegularVariation1987} and Lemma~\ref{lem:karamata-around-zero} in Appendix~\ref{sec:reg-var-zero}),
\eqref{eq:IA-asympt-neg-binft} guarantees that $\ecv$ and $\intk{1}$ are regularly varying at zero with index $b$.
A second application of Karamata's theorem---this time to $\intk{1}$---thus gives
\begin{equation}\label{eq:first-application-Karamata}
  \int_{\dumvar}^{\infty} \frac{\intk{1}(\dumvartwo)}{\dumvartwo^2}\diff \dumvartwo
  \sim 
  \frac{\intk{1}(\dumvar)}{(b+1)\, \dumvar}, 
  \quad\text{as } \dumvar\to 0.
\end{equation}
In particular, plugging the asymptotic scaling \eqref{eq:first-application-Karamata} in \eqref{eq:integral-of-integral-of-integral} yields
\begin{equation}\label{eq:result-first-Karamata}
  \intk{2}(\varepsilon)-\intk{3}(\varepsilon) \, \varepsilon
  \sim \varepsilon \int_{\varepsilon}^{\infty}\frac{b\, \intk{1}(\dumvar)}{(b+1)\, \dumvar^3} \diff \dumvar, 
  \quad\text{as } \varepsilon\to 0.
\end{equation}
Applying Karamata's theorem one more time, 
we further get
\begin{equation}\label{eq:result-first-Karamata-2}
\int_{\varepsilon}^{\infty}\frac{\intk{1}(\dumvar)}{\dumvar^3} \diff \dumvar
  \sim  \frac{\intk{1}(\varepsilon)\, \varepsilon^{-2}}{b+2}, 
  \quad\text{as } \varepsilon\to 0.
\end{equation}
Finally, combining \eqref{eq:result-first-Karamata} with \eqref{eq:result-first-Karamata-2}
gives
\begin{equation*}
  \intk{2}(\varepsilon)-\intk{3}(\varepsilon) \, \varepsilon
  \sim \frac{b\,\intk{1}(\varepsilon)\, \varepsilon^{-1}}{(b+1)(b+2)}, 
  \quad\text{as } \varepsilon\to 0,
\end{equation*}
which, when plugged in \eqref{eq:linear-risk-integral}, 
yields \eqref{eq:linear-risk-and-I1}.

Finally, we turn to the case $b=\infty$.
Since (RC) is satisfied by assumption, 
a direct application of Theorem~\ref{thm:metric-entropy-integral-form} guarantees that the metric entropy of $\ellip$ satisfies \eqref{eq:asymp-equivalence-ME-I1-first-order}.
By Lemmas~\ref{lem:abelian-integration} and \ref{lem:abelian-infimum} (see Appendix~\ref{sec:asymp-equiv-integration}),
it is thus sufficient to prove
\begin{equation}\label{eq:linear-risk-and-I1-infty}
    R_{\sigma}^L \left(\ellip\right)
    \sim  \newinf_{\varepsilon>0} \left\{ 2 \,  \sigma^2  \int_{\varepsilon}^{\infty}\frac{\intk{1}(\dumvar)}{\dumvar}\diff \dumvar + \varepsilon^2\right\}
    \sim 2 \,  \sigma^2  \int_{\varepsilon_\sigma}^{\infty}\frac{\intk{1}(\dumvar)}{\dumvar}\diff \dumvar + \varepsilon_\sigma^2,
    \quad \text{as } \sigma\to 0.
\end{equation}
The limit \eqref{eq:IA-asympt-neg-binft} with $b=\infty$ implies that $\intk{1}(\varepsilon)$ is asymptotically negligible compared to $\ecv(\varepsilon)$, as $\varepsilon\to 0$.
Consequently, l'H\^{o}pital's rule gives
\begin{equation}\label{eq:IA-asympt-neg-binft-2}
  \lim_{\dumvar\to 0} \frac{1}{\intk{1}(\dumvar)/\dumvar} \int_{\dumvar}^{\infty} \frac{\intk{1}(\dumvartwo)}{ \dumvartwo^2}\diff \dumvartwo
  = \lim_{\dumvar\to 0} \frac{\intk{1}(\dumvar)/\dumvar^2}{\ecv(\dumvar)/\dumvar^2+ \intk{1}(\dumvar)/\dumvar^2}
  = 0.
\end{equation}
We deduce from \eqref{eq:IA-asympt-neg-binft-2} that 
\begin{equation*}
  \int_{\dumvar}^{\infty} \frac{\intk{1}(\dumvartwo)}{\dumvar^2 \, \dumvartwo^2}\diff \dumvartwo
  \quadtext{is asymptotically negligible compared to}
  \frac{\intk{1}(\dumvar)}{\dumvar^3},
  \quad \text{as } \dumvar \to 0,
\end{equation*}
which, together with \eqref{eq:integral-of-integral-of-integral} and Lemma~\ref{lem:abelian-integration}, yields
\begin{equation}\label{eq:asymp-I2-I3-binft}
  \intk{2}(\varepsilon)-\intk{3}(\varepsilon) \, \varepsilon
  \sim \varepsilon \int_{\varepsilon}^{\infty}\frac{\intk{1}(\dumvar)}{\dumvar^3}\diff \dumvar, 
  \quad\text{as }\varepsilon\to 0.
\end{equation}
Next, introduce the auxiliary functions 
\begin{equation*}
  \psi_1(\varepsilon) \coloneqq \int_{\varepsilon}^{\infty}\frac{\intk{1}(\dumvar)}{\dumvar}\diff \dumvar,
  \quadtext{and} 
  \psi_3(\varepsilon)\coloneqq \int_{\varepsilon}^{\infty}\frac{\intk{1}(\dumvar)}{\dumvar^3}\diff \dumvar,
  \quad \text{for all } \varepsilon >0,
\end{equation*}
whose asymptotics can be related via l'H\^{o}pital's rule according to
\begin{equation}\label{eq:almost-there-binft}
  \lim_{\varepsilon\to 0}
  \frac{\psi_1(\varepsilon)}{\varepsilon^{2} \, \psi_3(\varepsilon)}
  = \lim_{\varepsilon\to 0}
  \frac{\psi_1'(\varepsilon)/\varepsilon^2 - 2\psi_1(\varepsilon)/\varepsilon^3}{\psi_3'(\varepsilon)}
  = 1+2\lim_{\varepsilon\to 0} \frac{\psi_1(\varepsilon)}{\intk{1}(\varepsilon)}.
\end{equation}
Applying once more l'H\^{o}pital's rule to the right-hand side of \eqref{eq:almost-there-binft},
we get 
\begin{equation}\label{eq:almost-there-binft2}
  \lim_{\varepsilon\to 0} \frac{\psi_1(\varepsilon)}{\intk{1}(\varepsilon)}
  = \lim_{\varepsilon\to 0} \frac{\intk{1}(\varepsilon)}{\ecv(\varepsilon)} = 0,
\end{equation}
where the last step uses the previous observation that $\intk{1}(\varepsilon)$ is asymptotically negligible when compared to $\ecv(\varepsilon)$, as $\varepsilon\to 0$.
Putting \eqref{eq:almost-there-binft} and \eqref{eq:almost-there-binft2} together yields 
\begin{equation*}
  \int_{\varepsilon}^{\infty}\frac{\intk{1}(\dumvar)}{\dumvar^3}\diff \dumvar
  =\psi_3(\varepsilon)
  \sim \varepsilon^{-2} \,  \psi_1(\varepsilon) 
  = \varepsilon^{-2} \,  \int_{\varepsilon}^{\infty}\frac{\intk{1}(\dumvar)}{\dumvar}\diff \dumvar,
  \quad\text{as } \varepsilon\to 0,
\end{equation*}
which allows us to rewrite \eqref{eq:asymp-I2-I3-binft} according to
\begin{equation}\label{eq:asymp-I2-I3-binft-updated}
  \intk{2}(\varepsilon)-\intk{3}(\varepsilon) \, \varepsilon
  \sim \varepsilon^{-1} \int_{\varepsilon}^{\infty}\frac{\intk{1}(\dumvar)}{\dumvar}\diff \dumvar, 
  \quad\text{as }\varepsilon\to 0.
\end{equation}
Plugging \eqref{eq:asymp-I2-I3-binft-updated} in \eqref{eq:linear-risk-integral}
establishes \eqref{eq:linear-risk-and-I1-infty}, thereby concluding the proof of Theorem~\ref{thm:bias-variance-decomp}.

\subsection{Proof of Theorem~\ref{thm:Metric-Entropy-Sobolev}}\label{sec:proof-theorem-Metric-Entropy-Sobolev}

Recall from the discussion following \eqref{eq:operator-T} that the semi-axes $\{\mu_n\}_{n\in\Ns}$ of the Sobolev ellipsoid are the eigenvalues of the operator $T$.
Furthermore, we see from the expression \eqref{eq:operator-T} that they are given by $\mu_n = \varphi(\lambda_n)$, for all $n\in \Ns$, where $\{\lambda_n\}_{n\in\Ns}$ are the (positive) eigenvalues of the Dirichlet Laplacian $-\Delta$,
and  $\varphi$ is the function defined as 
\begin{equation}\label{eq:varphi-and-its-inverse}
  \varphi \colon s\in \Rp \longmapsto \frac{1}{\sqrt{1+s^k}} \in (0,1),
  \quad \text{with inverse given by } \varphi^{-1}(u) = \left(u^{-2}-1\right)^{\frac{1}{k}},
\end{equation}
for all $u\in (0,1)$.
The eigenvalue-counting function $M_\lambda$ of the Laplacian and the semi-axis-counting function $\ecv$ of the Sobolev ellipsoid are therefore related according to
\begin{equation}\label{eq:relation-ecv-Laplace-ecv-T}
  M_\lambda (s) 
  \coloneqq \left| \left\{n \mid \lambda_n \leq s \right\}\right|
  = \left| \left\{n \mid \mu_n \geq \varphi(s) \right\}\right|
  = \ecv \left(\varphi(s)\right),
  \quad \text{for all } s>0.
\end{equation}
In particular, we deduce from the one-term version of the Weyl law \eqref{eq:Weyl-law-one-term} that 
\begin{equation}\label{eq:relation-ecv-Laplace-ecv-T-asymp}
  \ecv \left(u\right)
  = M_\lambda \left(\varphi^{-1}(u)\right) 
  \sim d\, \chi_d(\Omega) \, u^{-\frac{d}{k}},
  \quad \text{as } u \to 0,
\end{equation}
where we used \eqref{eq:varphi-and-its-inverse} to get $\varphi^{-1}(u) \sim u^{-2/k}$, as $u\to 0$. 
Moreover, the eigenvalue-counting function $M_\lambda$ can be related to the Riesz means appearing in \eqref{eq:asymptotics-Laplace-story} through the following Stieltjes integral
\begin{equation}\label{eq:Stieltjes-integral-Laplace}
  \sum_{n\in\Ns} \left(1-h^2\lambda_n\right)_+
  = \int_{0}^{h^{-2}}\left(1-h^2 s\right) \diff M_\lambda (s),
  \quad \text{for all } h >0.
\end{equation}
We can now successively integrate the left-hand side of \eqref{eq:Stieltjes-integral-Laplace} by parts and apply the relation \eqref{eq:relation-ecv-Laplace-ecv-T} to obtain
\begin{equation}\label{eq:sum-to-integral-Laplace}
  \sum_{n\in\Ns} \left(1-h^2\lambda_n\right)_+
  = h^2\int_{0}^{h^{-2}} M_\lambda (s) \diff s 
  = h^2\int_{0}^{h^{-2}} \ecv \left(\varphi(s)\right) \diff s,
  \quad \text{for all } h >0. 
\end{equation}
Upon changing variables in the right-hand side of \eqref{eq:sum-to-integral-Laplace}, by setting $\dumvar = \varphi(s)$, we get
\begin{equation}\label{eq:sum-to-integral-Laplace-2}
  \sum_{n\in\Ns} \left(1-h^2\lambda_n\right)_+
  = \frac{2h^2}{k} \int_{\varphi(h^{-2})}^{\infty} \frac{\ecv(\dumvar)}{\dumvar^3\left(\dumvar^{-2}-1\right)^{\frac{k-1}{k}}} \diff \dumvar,
  \quad \text{for all } h >0. 
\end{equation}
The relation \eqref{eq:sum-to-integral-Laplace-2} can be reformulated by letting $\varepsilon = \varphi(h^{-2})$, so that we arrive at the formula
\begin{equation}\label{eq:sum-to-integral-Laplace-3}
  \frac{k \varphi^{-1}(\varepsilon)}{2}
  \sum_{n\in\Ns} \left(1-\frac{\lambda_n}{\varphi^{-1}(\varepsilon)}\right)_+
  = \int_{\varepsilon}^{\infty} \frac{\ecv(\dumvar)}{\dumvar^3\left(\dumvar^{-2}-1\right)^{\frac{k-1}{k}}} \diff \dumvar,
  \quad \text{for all } \varepsilon >0. 
\end{equation}

We will treat both sides of \eqref{eq:sum-to-integral-Laplace-3} separately. 
Toward simplifying the right-hand side,
we first perform a Taylor expansion to get
\begin{equation*}
   \frac{1}{\dumvar^3\left(\dumvar^{-2}-1\right)^{\frac{k-1}{k}}}
   = \frac{1}{\dumvar^{1+\frac{2}{k}}\left(1-\dumvar^{2}\right)^{\frac{k-1}{k}}}
   = \frac{1}{\dumvar^{1+\frac{2}{k}}}\left(1+ O_{\dumvar\to 0}\left(\dumvar^{2}\right)\right)
   = \frac{1}{\dumvar^{1+\frac{2}{k}}}\left(1+ O_{\dumvar\to 0}\left(\dumvar^{\frac{2}{k}}\right)\right),
\end{equation*}
which, using that asymptotics are preserved under integration (see Lemma~\ref{lem:abelian-integration} in Appendix~\ref{sec:asymp-equiv-integration}), provides the asymptotic relation
\begin{equation}\label{eq:simplify-integral-Taylor}
  \int_{\varepsilon}^{\infty} \frac{\ecv(\dumvar)}{\dumvar^3\left(\dumvar^{-2}-1\right)^{\frac{k-1}{k}}} \diff \dumvar
  = \intk{1+\frac{2}{k}}(\varepsilon) + O_{\varepsilon\to 0}\left( \intk{1}(\varepsilon)\right).
\end{equation}
Upon application of Lemma~\ref{lem:integral-k-to-k+1}, 
with the choice $\typevar_1 = 1$ and $\typevar_2 = 1+{2}/{k}$, 
we can relate the integrals $\intk{1}$ and $\intk{1+\frac{2}{k}}$ according to
\begin{equation}\label{eq:relate-integrals-2-k-plus-2-k-minus-00}
  \intk{1}(\varepsilon)
  = \intk{1+\frac{2}{k}}(\varepsilon) \, \varepsilon^{\frac{2}{k}} + \frac{2}{k} \int_{\varepsilon}^{\infty} \intk{1+\frac{2}{k}}(\dumvar) \,  \dumvar^{\frac{2}{k}-1} \diff \dumvar,
  \quad \text{for all } \varepsilon >0.
\end{equation}
We know from \eqref{eq:relation-ecv-Laplace-ecv-T-asymp} that
$\ecv$ is regularly varying at zero with index $d/k$.
Therefore, a direct application of Karamata's theorem to $\ecv$ (see Lemma~\ref{lem:karamata-around-zero} in Appendix~\ref{sec:reg-var-zero}), with the choice $\typevar = 1+2/k$ and $b=d/k$, implies that $\intk{1+\frac{2}{k}}$ is regularly varying at zero with index $(d+2)/k$.
Applying once more Karamata's theorem, this time to $\intk{1+\frac{2}{k}}$ with $\typevar=1-2/k$ and $b=(d+2)/k$, yields
\begin{equation}\label{eq:relate-integrals-2-k-plus-2-k-minus-01}
  \int_{\varepsilon}^{\infty} \intk{1+\frac{2}{k}}(\dumvar) \,  \dumvar^{\frac{2}{k}-1} \diff \dumvar
  \sim \frac{k}{d}\intk{1+\frac{2}{k}}(\varepsilon) \, \varepsilon^{\frac{2}{k}},
  \quad \text{as } \varepsilon  \to 0.
\end{equation}
In particular, 
combining \eqref{eq:simplify-integral-Taylor}, \eqref{eq:relate-integrals-2-k-plus-2-k-minus-00}, and \eqref{eq:relate-integrals-2-k-plus-2-k-minus-01},
we have proven that 
\begin{equation}\label{eq:integral-many-terms-to-simple-integral}
  \int_{\varepsilon}^{\infty} \frac{\ecv(\dumvar)}{\dumvar^3\left(\dumvar^{-2}-1\right)^{\frac{k-1}{k}}} \diff \dumvar
  = \intk{1+\frac{2}{k}}(\varepsilon)\left(1 + O_{\varepsilon\to 0}\left( \varepsilon^{\frac{2}{k}}\right)\right).
\end{equation}

Now turning to the left-hand side of \eqref{eq:sum-to-integral-Laplace-3}, 
we deduce, from the expression of the inverse of $\varphi$ provided in \eqref{eq:varphi-and-its-inverse}, that 
\begin{equation}\label{eq:relation-h-and-vareps}
  \varphi^{-1}(\varepsilon)
  = \varepsilon^{-\frac{2}{k}}\left(1+O_{\varepsilon\to 0}\left(\varepsilon^2\right)\right).
\end{equation}
Furthermore, we can use the formula \eqref{eq:asymptotics-Laplace-story} to get 
\begin{equation}\label{eq:asymp-eps-sum-Laplace}
  \sum_{n\in\Ns} \left(1-\frac{\lambda_n}{\varphi^{-1}(\varepsilon)}\right)_+
  = C_1 \varphi^{-1}(\varepsilon)^{\frac{d}{2}} - C_2 \varphi^{-1}(\varepsilon)^{\frac{d-1}{2}}\left(1+o_{\varepsilon\to 0}\left(1\right)\right),
\end{equation}
where we introduced the constants 
\begin{equation}\label{eq:LT-to-rescaled-Hausdorff}
  C_1 \coloneqq \frac{2d}{d+2} \chi_d(\Omega)
  \quadtext{and}
  C_2 \coloneqq \frac{d-1}{2(d+1)} \chi_{d-1}(\partial\Omega).
\end{equation}
When combined with \eqref{eq:relation-h-and-vareps}, \eqref{eq:asymp-eps-sum-Laplace}
gives
\begin{align}
  \sum_{n\in\Ns} \left(1-\frac{\lambda_n}{\varphi^{-1}(\varepsilon)}\right)_+
  &= C_1 \varepsilon^{-\frac{d}{k}}\left(1+O_{\varepsilon\to 0}\left(\varepsilon^2\right)\right) - C_2 \varepsilon^{-\frac{d-1}{k}} + o_{\varepsilon\to 0}\left(\varepsilon^{-\frac{d-1}{k}}\right)\label{eq:asymp-eps-sum-Laplace-bis-AA} \\
  &= C_1 \varepsilon^{-\frac{d}{k}} - C_2 \varepsilon^{-\frac{d-1}{k}} + o_{\varepsilon\to 0}\left(\varepsilon^{-\frac{d-1}{k}}\right) + O_{\varepsilon\to 0}\left(\varepsilon^{-\frac{d-2k}{k}}\right) \nonumber \\
  &= C_1 \varepsilon^{-\frac{d}{k}} - C_2 \varepsilon^{-\frac{d-1}{k}} + o_{\varepsilon\to 0}\left(\varepsilon^{-\frac{d-1}{k}}\right),\label{eq:asymp-eps-sum-Laplace-bis-AB}
\end{align}
where we used $k\geq 1$ in the last step.
Putting  \eqref{eq:sum-to-integral-Laplace-3},  \eqref{eq:integral-many-terms-to-simple-integral}, \eqref{eq:relation-h-and-vareps}, and \eqref{eq:asymp-eps-sum-Laplace-bis-AA}--\eqref{eq:asymp-eps-sum-Laplace-bis-AB} together yields the relation
\begin{equation*}
  \frac{k\varepsilon^{-\frac{2}{k}}}{2}\left(1+O_{\varepsilon\to 0}\left(\varepsilon^2\right)\right)
  \left( C_1 \varepsilon^{-\frac{d}{k}} - C_2 \varepsilon^{-\frac{d-1}{k}} + o_{\varepsilon\to 0}\left(\varepsilon^{-\frac{d-1}{k}}\right)\right)
  = \intk{1+\frac{2}{k}}(\varepsilon)\left(1 + O_{\varepsilon\to 0}\left( \varepsilon^{\frac{2}{k}}\right)\right).
\end{equation*}
Arguments analogous to the ones used in \eqref{eq:asymp-eps-sum-Laplace-bis-AA}--\eqref{eq:asymp-eps-sum-Laplace-bis-AB} guarantee that the $O_{\varepsilon\to 0}(\cdot)$-terms can be absorbed into the $o_{\varepsilon\to 0}(\cdot)$-term, which allows for the simplification
\begin{equation}\label{eq:asymptotics-integral-12k}
  \intk{1+\frac{2}{k}}(\varepsilon)
  = \frac{k\varepsilon^{-\frac{2}{k}}}{2} 
  \left(C_1\varepsilon^{-\frac{d}{k}}
  - C_2\varepsilon^{-\frac{d-1}{k}}
  + o_{\varepsilon\to 0}\left(\varepsilon^{-\frac{d-1}{k}} \right)
  \right).
\end{equation}

We next relate the type-$1$ integral $\intk{1}$ to the type-$(1+2/k)$ integral $\intk{1+\frac{2}{k}}$ via \eqref{eq:relate-integrals-2-k-plus-2-k-minus-00}.
From the asymptotics in \eqref{eq:asymptotics-integral-12k}, 
we deduce
\begin{equation}\label{eq:integral-expansion-asymp-1}
  \intk{1+\frac{2}{k}}(\varepsilon)\,  \varepsilon^{\frac{2}{k}}
   = \frac{C_1k}{2} \varepsilon^{-\frac{d}{k}}
   - \frac{C_2k}{2}\varepsilon^{-\frac{d-1}{k}}
  + o_{\varepsilon\to 0}\left(\varepsilon^{-\frac{d-1}{k}}\right),
\end{equation}
as well as 
\begin{align}
  \frac{2}{k} \int_{\varepsilon}^{\infty} \intk{1+\frac{2}{k}}(\dumvar)\, \dumvar^{\frac{2}{k} - 1} \diff \dumvar 
  &= C_1 \int_{\varepsilon}^{\infty}\dumvar^{-\frac{d}{k}-1}\diff \dumvar  - C_2  \int_{\varepsilon}^{\infty}\dumvar^{-\frac{d-1}{k}-1}\left(1 + o_{\dumvar\to 0}(1)\right) \diff \dumvar \label{eq:integral-expansion-asymp-2}  \\
  &= \frac{C_1k}{d} \varepsilon^{-\frac{d}{k}}
  - \frac{C_2k}{d-1}\varepsilon^{-\frac{d-1}{k}}
  + o_{\varepsilon\to 0}\left(\varepsilon^{-\frac{d-1}{k}}\right). \label{eq:integral-expansion-asymp-3}
\end{align}
We are left to insert the asymptotics \eqref{eq:integral-expansion-asymp-1} and \eqref{eq:integral-expansion-asymp-2}--\eqref{eq:integral-expansion-asymp-3}
in \eqref{eq:relate-integrals-2-k-plus-2-k-minus-00} to obtain the asymptotic behavior of $\intk{1}$ according to
\begin{align*}
  \intk{1}(\varepsilon)
  &= C_1\left(\frac{k}{2}+ \frac{k}{d}\right) \, \varepsilon^{-\frac{d}{k}}
  - C_2\left(\frac{k}{2}+ \frac{k}{d-1}\right) \, \varepsilon^{-\frac{d-1}{k}}
  + o_{\varepsilon\to 0}\left(\varepsilon^{-\frac{d-1}{k}}\right) \\
  &= \frac{C_1k(d+2)}{2d}\varepsilon^{-\frac{d}{k}}
  - \frac{C_2k(d+1)}{2(d-1)}\varepsilon^{-\frac{d-1}{k}}
  + o_{\varepsilon\to 0}\left(\varepsilon^{-\frac{d-1}{k}}\right).
\end{align*}
Using \eqref{eq:LT-to-rescaled-Hausdorff}, we get
\begin{equation}\label{eq:asymptotics-I1-0}
  \intk{1}(\varepsilon)
  = k\, \chi_d(\Omega) \, {\varepsilon}^{-\frac{d}{k}}  
  - \frac{k\, \chi_{d-1}(\partial \Omega)}{4}\, 
  \varepsilon^{-\frac{d-1}{k}} 
  + o_{\varepsilon \to 0}\left(\varepsilon^{-\frac{d-1}{k}}\right).
\end{equation}

The last step consists in relating the asymptotic behavior of $\intk{1}$ obtained in \eqref{eq:asymptotics-I1-0} to that of the metric entropy.
We intend to apply Theorem~\ref{thm:metric-entropy-integral-form}
and therefore need to show that (i) the error term in \eqref{eq:main-result-ME} is negligible compared to $\varepsilon^{-\frac{d-1}{k}}$, and (ii)  (RC) is verified.
We deduce from \eqref{eq:relation-ecv-Laplace-ecv-T-asymp} the asymptotic relation
\begin{equation*}
\sqrt{\ecv (\varepsilon)\ln\left(\ecv (\varepsilon)\right)\ln\left(\varepsilon^{-1}\right)}
  = o_{\varepsilon\to 0}\left(\varepsilon^{-\frac{d}{2k}-\eta}\right),
  \quad \text{for all } \eta>0.
\end{equation*}
Choosing $\eta = (d-2)/(2k)$---which is possible thanks to the assumption $d\geq 3$---we thus  get 
\begin{equation*}
  \sqrt{\ecv (\varepsilon)\ln\left(\ecv (\varepsilon)\right)\ln\left(\varepsilon^{-1}\right)}
  = o_{\varepsilon\to 0}\left(\varepsilon^{-\frac{d-1}{k}}\right),
\end{equation*}
which answers item (i).
Next, again using \eqref{eq:relation-ecv-Laplace-ecv-T-asymp}, 
we see that $\rcb$ is satisfied with $b=d/k$ by choosing $f\colon x \mapsto d\, \chi_d(\Omega) \, x^{-\frac{d}{k}}$, thereby answering item (ii).
We can therefore apply Theorem~\ref{thm:metric-entropy-integral-form} in \eqref{eq:asymptotics-I1-0}, yielding
\begin{equation*}
    H \left(\varepsilon; \mathcal{E}^{\text{Sob}}_{d, k}\right) 
    = \intk{1}(\varepsilon) + o_{\varepsilon\to 0}\left(\varepsilon^{-\frac{d-1}{k}}\right) 
  =k\, \chi_d(\Omega) \, {\varepsilon}^{-\frac{d}{k}}  
  - \frac{k\, \chi_{d-1}(\partial \Omega)}{4}\, 
  \varepsilon^{-\frac{d-1}{k}} 
  + o_{\varepsilon \to 0}\left(\varepsilon^{-\frac{d-1}{k}}\right),
\end{equation*}
which is the desired result

\subsection{Proof of Theorem~\ref{thm:pinsker-sobolev-order-two}}\label{sec:proof-pinsker-sob-order-two}

We first prove \eqref{eq:Pinsker-order-1}.
We get from \eqref{eq:relation-ecv-Laplace-ecv-T-asymp} that the semi-axis-counting function $\ecv$ of the Sobolev ellipsoid satisfies $\rcb$,  with $b=d/k$, so that we can apply Theorem~\ref{thm:bias-variance-decomp} to get
\begin{equation}\label{eq:asymp-risk-pinsker-order-1}
  R_{\sigma} \left(\mathcal{E}^{\text{Sob}}_{d, k}\right)
  \sim \newinf_{\varepsilon>0} \left\{ \frac{2 \, k \, d \, \sigma^2}{(d+k)(d+2k)} H \left(\varepsilon; \mathcal{E}^{\text{Sob}}_{d, k}\right)  + \varepsilon^2\right\}
  \sim \newinf_{\varepsilon>0} \Phi_\sigma(\varepsilon),
  \quad\text{as } \sigma\to 0, 
\end{equation}
where $\Phi_\sigma$ can be taken, using \eqref{eq:metric-entropy-sob-first-order}, as
\begin{equation}\label{eq:expression-phi-pinsker-sob-order-1}
  \Phi_\sigma (\varepsilon) \coloneqq \frac{2 \, k^2 \,  d  \, \chi_d(\Omega)\, \sigma^2}{(d+k)(d+2k)}   \, \varepsilon^{-\frac{d}{k}}+\varepsilon^2, \quad \text{for all } \varepsilon>0 \text{ and } \sigma>0.
\end{equation}
In order to find the minimum of $\Phi_\sigma$, we compute its derivative according to
\begin{equation}\label{eq:pinsker-sob-order-1-derivative}
    \Phi_\sigma'(\varepsilon) 
    = - \frac{2 \, k \,  d^2  \, \chi_d(\Omega)\, \sigma^2}{(d+k)(d+2k)}   \, \varepsilon^{-\frac{d}{k}-1}+2 \varepsilon, \quad \text{for all } \varepsilon>0\text{ and } \sigma>0.
\end{equation}
Setting the derivative in \eqref{eq:pinsker-sob-order-1-derivative} to zero, 
we obtain that $\Phi_\sigma$ is minimized at
\begin{equation}\label{eq:pinsker-order-1-sob-argmin}
    \varepsilon_\sigma^* \coloneqq \left( \frac{ k \,  d^2  \, \chi_d(\Omega)\, \sigma^2}{(d+k)(d+2k)}   \right)^{\frac{k}{d+2k}},
    \quad \text{for all } \sigma>0.
\end{equation}
Consequently, the minimum of $\Phi_\sigma$ can be computed by plugging \eqref{eq:pinsker-order-1-sob-argmin} in \eqref{eq:expression-phi-pinsker-sob-order-1} according to 
\begin{align}
  \newinf_{\varepsilon>0} \Phi(\varepsilon)
  &= \Phi(\varepsilon_\sigma^*) \label{eq:minimum-sobolev-pinsker-order1-1}\\
  &=  \frac{2 \, k^2 \,  d  \, \chi_d(\Omega)\, \sigma^2}{(d+k)(d+2k)}   \, \left( \frac{ k \,  d^2  \, \chi_d(\Omega)\, \sigma^2}{(d+k)(d+2k)}   \right)^{- \frac{d}{d+2k}}+\left( \frac{ k \,  d^2  \, \chi_d(\Omega)\, \sigma^2}{(d+k)(d+2k)}   \right)^{\frac{2k}{d+2k}} \nonumber\\
  &= \frac{d+2k}{d} \left(\frac{k \,  d^2 \,  \chi_d(\Omega)  \, \sigma^2}{(d+k)(d+2k)}\right)^{\frac{2k}{d+2k}},
  \quad \text{for all } \sigma>0.
  \label{eq:minimum-sobolev-pinsker-order1-2}
\end{align}
Combining \eqref{eq:asymp-risk-pinsker-order-1} and \eqref{eq:minimum-sobolev-pinsker-order1-1}--\eqref{eq:minimum-sobolev-pinsker-order1-2} yields the desired result.

We now proceed to establish \eqref{eq:Pinsker-order-2}.
We begin by applying Lemma~\ref{lem:integral-k-to-k+1},
first with the choice $\typevar_1=2$ and $\typevar_2=1+{2}/{k}$ to get 
\begin{equation}\label{eq:integral-2-vs-integral-k2}
  \intk{2}(\varepsilon)
  = \intk{1+\frac{2}{k}}(\varepsilon) \, \varepsilon^{\frac{2}{k}-1}
   + \left(\frac{2}{k}-1\right) \int_{\varepsilon}^{\infty} \intk{1+\frac{2}{k}}(\dumvar) \, \dumvar^{\frac{2}{k} - 2} \diff \dumvar,
   \quad \text{for all } \varepsilon >0,
\end{equation}
and then with the choice $\typevar_1=3$ and $\typevar_2=1+{2}/{k}$ to get
\begin{equation}\label{eq:integral-3-vs-integral-k2}
  \intk{3}(\varepsilon)
  = \intk{1+\frac{2}{k}}(\varepsilon) \, \varepsilon^{\frac{2}{k}-2}
   + \left(\frac{2}{k}-2\right) \int_{\varepsilon}^{\infty} \intk{1+\frac{2}{k}}(\dumvar)\, \dumvar^{\frac{2}{k} - 3} \diff \dumvar,
   \quad \text{for all } \varepsilon >0.
\end{equation}
We next plug the asymptotics \eqref{eq:asymptotics-integral-12k} in  \eqref{eq:integral-2-vs-integral-k2} to derive the asymptotic behavior of $\intk{2}$.
Specifically,
we obtain 
\begin{align}
  \intk{2}(\varepsilon)
  &= \frac{k\varepsilon^{-1}}{2} 
  \left( C_1\varepsilon^{-\frac{d}{k}}
  - C_2\varepsilon^{-\frac{d-1}{k}}
  + o_{\varepsilon\to 0}\left(\varepsilon^{-\frac{d-1}{k}} \right)
  \right) \label{eq:sobolev-derivation-I2-1}\\
  &\quad \quad + \left(\frac{2}{k}-1\right) \frac{k}{2} \int_{\varepsilon}^{\infty} C_1\dumvar^{-\frac{d}{k}-2}
  - C_2\dumvar^{-\frac{d-1}{k}-2}\left(1+
  + o_{\dumvar\to 0}\left(1 \right)\right)
   \diff \dumvar \nonumber\\
   &= \frac{k\varepsilon^{-1}}{2} 
  \left(C_1\varepsilon^{-\frac{d}{k}}
  - C_2\varepsilon^{-\frac{d-1}{k}}
  + o_{\varepsilon\to 0}\left(\varepsilon^{-\frac{d-1}{k}} \right)
  \right) \nonumber\\
  &\quad \quad + \left(\frac{2}{k}-1\right) \frac{k}{2} \left(C_1\frac{\varepsilon^{-\frac{d}{k}-1}}{1+\frac{d}{k}}
  - C_2\frac{\varepsilon^{-\frac{d-1}{k}-1}}{1+\frac{d-1}{k}}
  + o_{\varepsilon\to 0}\left(\varepsilon^{-\frac{d-1}{k}-1} \right)\right) \nonumber\\
  &= \frac{C_1k}{2} \left(1+ \left(\frac{2}{k}-1\right) \frac{k}{d+k} \right) \varepsilon^{-\frac{d}{k}-1} \nonumber \\
  &\quad \quad -\frac{C_2k}{2} \left(1+ \left(\frac{2}{k}-1\right) \frac{k}{d+k-1} \right) {\varepsilon^{-\frac{d-1}{k}-1}} + o_{\varepsilon\to 0}\left(\varepsilon^{-\frac{d-1}{k}-1} \right),\label{eq:sobolev-derivation-I2-2}
\end{align}
where we recall that the constants $C_1$ and $C_2$ have been introduced in \eqref{eq:LT-to-rescaled-Hausdorff}.
After simplification, \eqref{eq:sobolev-derivation-I2-1}--\eqref{eq:sobolev-derivation-I2-2} becomes
\begin{equation}\label{eq:integral-2-asymptotics-Sobolev}
  \intk{2}(\varepsilon)
  = \frac{C_1 k (d+2 )}{2(d+k)}\varepsilon^{-\frac{d}{k}-1}
  -\frac{C_2 k(d+1)}{2(d+k-1)} \varepsilon^{-\frac{d-1}{k}-1} + o_{\varepsilon\to 0}\left(\varepsilon^{-\frac{d-1}{k}-1} \right).
\end{equation}
A similar approach, relying on \eqref{eq:integral-3-vs-integral-k2} instead of \eqref{eq:integral-2-vs-integral-k2}, yields 
\begin{equation}\label{eq:integral-3-asymptotics-Sobolev}
  \intk{3}(\varepsilon)
  = \frac{C_1 k (d+2 )}{2(d+2k)} \varepsilon^{-\frac{d}{k}-2}
  -\frac{C_2 k(d+1)}{2(d+2k-1)} {\varepsilon^{-\frac{d-1}{k}-2}} + o_{\varepsilon\to 0}\left(\varepsilon^{-\frac{d-1}{k}-2} \right).
\end{equation}
Putting the asymptotic relations obtained in \eqref{eq:integral-2-asymptotics-Sobolev} and \eqref{eq:integral-3-asymptotics-Sobolev} together implies
\begin{align}
  2\intk{3}(\varepsilon) - \frac{\intk{2}(\varepsilon)}{\varepsilon}
  &= \frac{C_1 k (d+2 )}{2} \left(\frac{2}{d+2k}-\frac{1}{d+k}\right) \, \varepsilon^{-\frac{d}{k}-2}\label{eq:difference-integrals-Sobolev-1}\\
  & \quad \quad -\frac{C_2 k(d+1)}{2} \left(\frac{2}{d+2k-1}-\frac{1}{d+k-1}\right) \, {\varepsilon^{-\frac{d-1}{k}-2}}  + o_{\varepsilon\to 0}\left(\varepsilon^{-\frac{d-1}{k}-2} \right) \nonumber\\
  & = \frac{C_1k d (d+2 )}{2(d+k)(d+2k)}  \varepsilon^{-\frac{d}{k}-2}\nonumber\\
  & \quad \quad -\frac{C_2 k (d-1) (d+1 )}{2(d+k-1)(d+2k-1)} \varepsilon^{-\frac{d-1}{k}-2}  + o_{\varepsilon\to 0}\left(\varepsilon^{-\frac{d-1}{k}-2} \right).\label{eq:difference-integrals-Sobolev-2}
\end{align}
Next, we introduce the exponents $\beta_1 \coloneqq 2+d/k$ and $\beta_2\coloneqq 2+(d-1)/k$ 
as well as the constants
\begin{equation}\label{eq:define-kappa}
  \kappa 
  \coloneq \frac{C_1k d (d+2 )}{2(d+k)(d+2k)}
  \quadtext{and}
  \kappa' 
  \coloneqq \frac{C_2 k (d-1) (d+1 )}{2(d+k-1)(d+2k-1)},
\end{equation}
so that we can rewrite \eqref{eq:difference-integrals-Sobolev-1}--\eqref{eq:difference-integrals-Sobolev-2} according to
\begin{equation}\label{eq:asymp-int-before-crit-radius}
  2\intk{3}(\varepsilon) - \frac{\intk{2}(\varepsilon)}{\varepsilon}
  = \kappa \varepsilon^{-\beta_1}
  - \kappa' \varepsilon^{-\beta_2}
  + o_{\varepsilon\to 0}\left(\varepsilon^{-\beta_2} \right).
\end{equation}
Since, for all $\sigma>0$, the critical radius $\varepsilon_{\sigma}$ is the solution of \eqref{eq:definition-critical-radius}, 
we can deduce an asymptotic characterization of $\varepsilon_{\sigma}$ from \eqref{eq:asymp-int-before-crit-radius} according to
\begin{equation}\label{eq:def-critical-radius-Sobolev}
  \sigma^{-2} 
  = 2\intk{3}(\varepsilon_\sigma) - \frac{\intk{2}(\varepsilon_\sigma)}{\varepsilon_\sigma}
  = \kappa \varepsilon_{\sigma}^{-\beta_1}
  - \kappa' \varepsilon_{\sigma}^{-\beta_2}
  + o_{\sigma\to 0}\left(\varepsilon_{\sigma}^{-\beta_2} \right).
\end{equation}
Inverting  \eqref{eq:def-critical-radius-Sobolev} (using, for instance, \cite[Lemma~9]{ALLARD2025101762}), we obtain the asymptotic behavior of $\varepsilon_\sigma$ according to
\begin{align}
  \varepsilon_{\sigma}
  &= \left(\kappa\sigma^2\right)^{\frac{1}{\beta_1}}
  - \frac{\kappa'}{\kappa \beta_1} \left(\kappa\sigma^2\right)^{\frac{1+\beta_1-\beta_2}{\beta_1}}
  + o_{\sigma\to 0} \left(\sigma^{\frac{2+2\beta_1 -2\beta_2}{\beta_1}} \right) \label{eq:epsn-to-sigman}\\
  &= \left(\kappa\sigma^2\right)^{\frac{k}{d+2k}}
  - \frac{\kappa'k}{\kappa (d+2k)} \left(\kappa\sigma^2\right)^{\frac{k+1}{d+2k}}
  + o_{\sigma\to 0} \left(\sigma^{\frac{2(k+1)}{d+2k}} \right).\label{eq:epsn-to-sigman-2}
\end{align}
Now, recall from Theorem~\ref{thm:linear-risk-formula} that the linear risk $R_{\sigma}^L$ is determined by the critical radius $\varepsilon_\sigma$ via the formula
\begin{align}
  R_{\sigma}^L \left(\mathcal{E}^{\text{Sob}}_{d, k}\right)
     &= \sigma^2 \varepsilon_{\sigma} \intk{2}(\varepsilon_{\sigma}) \label{eq:linear-risk-Sobolev-epsn-1} \\
     &= \sigma^2 \left(\frac{C_1 k (d+2 )}{2(d+k)} \varepsilon_{\sigma}^{-\frac{d}{k}}
  -\frac{C_2k(d+1)}{2(d+k-1)}  \varepsilon_{\sigma}^{-\frac{d-1}{k}} + o_{\sigma\to 0}\left(\varepsilon^{-\frac{d-1}{k}} \right)\right),\label{eq:linear-risk-Sobolev-epsn-2}
\end{align}
where the second step relies on \eqref{eq:integral-2-asymptotics-Sobolev}.
Using \eqref{eq:epsn-to-sigman}--\eqref{eq:epsn-to-sigman-2}, 
we can expand the occurrences of $\varepsilon_{\sigma}$ in the right-hand side of \eqref{eq:linear-risk-Sobolev-epsn-1}--\eqref{eq:linear-risk-Sobolev-epsn-2} according to
\begin{align}
   \varepsilon_{\sigma}^{-\frac{d}{k}}
   &= \left(\kappa\sigma^2\right)^{-\frac{d}{d+2k}}
  \left(1- \frac{\kappa'k}{\kappa (d+2k)} \left(\kappa\sigma^2\right)^{\frac{1}{d+2k}}
  + o_{\sigma\to 0} \left(\sigma^{\frac{2}{d+2k}} \right)\right)^{-\frac{d}{k}}\label{eq:power-epsn-1}\\
  &= \left(\kappa\sigma^2\right)^{-\frac{d}{d+2k}}
  + \frac{\kappa'd}{\kappa (d+2k)} \left(\kappa\sigma^2\right)^{-\frac{d-1}{d+2k}}
  + o_{\sigma\to 0} \left(\sigma^{-\frac{2(d-1)}{d+2k}} \right),\label{eq:power-epsn-2}
\end{align}
and 
\begin{equation}\label{eq:power-epsn-3}
   \varepsilon_{\sigma}^{-\frac{d-1}{k}}
   = \left(\kappa\sigma^2\right)^{-\frac{d-1}{d+2k}} + o_{\sigma\to 0} \left(\sigma^{-\frac{2(d-1)}{d+2k}} \right).
\end{equation}
Injecting \eqref{eq:power-epsn-1}--\eqref{eq:power-epsn-2} and \eqref{eq:power-epsn-3}
in \eqref{eq:linear-risk-Sobolev-epsn-1}--\eqref{eq:linear-risk-Sobolev-epsn-2} yields
\begin{align}
  R_{\sigma}^L \left(\mathcal{E}^{\text{Sob}}_{d, k}\right)
     &= \sigma^2 \Biggl(
      \frac{C_1 k (d+2 )}{2(d+k)}\left(\kappa\sigma^2\right)^{-\frac{d}{d+2k}} \label{eq:final-step-risk-sob-1} \\
      &\quad  +\left( \frac{C_1 \kappa'\, k \, d (d+2 )}{2\kappa (d+k) (d+2k)} -\frac{C_2k(d+1)}{2(d+k-1)} \right) \left(\kappa\sigma^2\right)^{-\frac{d-1}{d+2k}} + o_{\sigma\to 0} \left(\sigma^{-\frac{2(d-1)}{d+2k}} \right)
     \Biggr)\nonumber\\
      &=\frac{C_1 k (d+2 )}{2\kappa (d+k)}\left(\kappa\sigma^2\right)^{\frac{2k}{d+2k}} \label{eq:final-step-risk-sob-2}\\
      &\quad  +\left( \frac{C_1 \kappa'\, k \,  d (d+2 )}{2\kappa^2 (d+k) (d+2k)} -\frac{C_2k(d+1)}{2\kappa(d+k-1)} \right) \left(\kappa\sigma^2\right)^{\frac{2k+1}{d+2k}} + o_{\sigma\to 0} \left(\sigma^{\frac{4k+2}{d+2k}} \right).\nonumber
\end{align}
From the definitions of  $\kappa$ and $\kappa'$ in \eqref{eq:define-kappa}, 
we verify that
\begin{equation}\label{eq:final-step-risk-sob-3}
  \frac{C_1 k (d+2 )}{2\kappa (d+k)} = \frac{d+2k}{d},
\end{equation}
and 
\begin{equation}\label{eq:final-step-risk-sob-4}
  \frac{C_1 \kappa'\, k \, d (d+2 )}{2\kappa^2 (d+k) (d+2k)} -\frac{C_2k(d+1)}{2\kappa(d+k-1)}
  = \frac{\kappa'}{\kappa}\left(1-\frac{d+2k-1}{d-1}\right)= - \frac{2\kappa'k}{\kappa(d-1)}.
\end{equation}
The ratio $\kappa'/\kappa$ can be further developed, using  \eqref{eq:LT-to-rescaled-Hausdorff} and  \eqref{eq:define-kappa}, according to
\begin{equation}\label{eq:final-step-risk-sob-5}
  \frac{\kappa'}{\kappa}
  = \frac{C_2(d-1)(d+1)(d+k)(d+2k)}{C_1 d (d+2) (d+k-1)(d+2k-1)}
  = \frac{(d-1)^2(d+k)(d+2k)\chi_{d-1}(\partial\Omega)}{4 d^2 (d+k-1)(d+2k-1)\chi_d(\Omega)}.
\end{equation}
Putting \eqref{eq:final-step-risk-sob-1}--\eqref{eq:final-step-risk-sob-2}, \eqref{eq:final-step-risk-sob-3}, \eqref{eq:final-step-risk-sob-4}, and \eqref{eq:final-step-risk-sob-5} together, we get 
\begin{equation}\label{eq:linear-risk-sob-final}
    R_{\sigma}^L \left(\mathcal{E}^{\text{Sob}}_{d, k}\right)
    = K_1 \left(\kappa\sigma^2\right)^{\frac{2k}{d+2k}}
    + K_2 \left(\kappa\sigma^2\right)^{\frac{2k+1}{d+2k}} + o_{\sigma\to 0} \left(\sigma^{\frac{4k+2}{d+2k}} \right),
\end{equation}
where 
\begin{equation*}
  \kappa 
  = \frac{C_1k \,  d (d+2 )}{2(d+k)(d+2k)}
  = \frac{k \, d^2 \, \chi_d(\Omega)}{(d+k)(d+2k)}
\end{equation*}
and
\begin{equation*}
  K_1
  = \frac{C_1 k (d+2 )}{2\kappa (d+k)} = \frac{d+2k}{d}
  \quadtext{and}
  K_2 
  = - \frac{k(d-1)(d+k)(d+2k)\chi_{d-1}(\partial\Omega)}{2 d^2 (d+k-1)(d+2k-1)\chi_d(\Omega)}.
\end{equation*}

The asymptotic characterization \eqref{eq:linear-risk-sob-final} holds for the linear minimax risk.
In order to deduce a corresponding characterization for the non-linear minimax risk using \eqref{eq:asymp-linear-to-nonlinear-risk}, 
it is sufficient to show that 
\begin{equation}\label{eq:to-prove-end-pinsker-sobb}
  \frac{\sqrt{\ln(v_\sigma)}}{v_\sigma}
  = o_{\sigma\to 0} \left(\sigma^{\frac{4k+2}{d+2k}} \right),
  \quadtext{where} v_\sigma \coloneqq \frac{\varepsilon_\sigma}{\sigma R^L_{\sigma}\left(\ellip\right)}.
\end{equation}
To this end, we use \eqref{eq:epsn-to-sigman}--\eqref{eq:epsn-to-sigman-2} and \eqref{eq:linear-risk-sob-final} to obtain the respective asymptotics
\begin{equation*}
  \varepsilon_\sigma 
  \stackrel{K}{\sim} \sigma^{\frac{2k}{d+2k}}
  \quadtext{and} 
  R^L_{\sigma}\left(\ellip\right) 
  \stackrel{K}{\sim} \sigma^{\frac{4k}{d+2k}},
  \quad\text{as } \sigma\to 0.
\end{equation*}
They imply
\begin{equation}\label{eq:asymp-scale-vsigmm}
  v_\sigma 
  = \frac{\varepsilon_\sigma}{\sigma R^L_{\sigma}\left(\ellip\right)}
  \stackrel{K}{\sim} \sigma^{-\frac{d+4k}{d+2k}},
  \quad\text{as } \sigma\to 0.
\end{equation}
Together, the assumption $d\geq 3$ and the asymptotic scaling \eqref{eq:asymp-scale-vsigmm} yield
\begin{equation*}
    \frac{\sqrt{\ln(v_\sigma)}}{v_\sigma}
  = o_{\sigma\to 0} \left(\sigma^{\frac{4k+2}{d+2k}} \right),
\end{equation*}
which establishes the desired result \eqref{eq:to-prove-end-pinsker-sobb}, thereby finishing the proof of \eqref{eq:Pinsker-order-2}.

\section{Auxiliary Results}

\subsection{Regular Variation at Zero and Karamata's Theorem}\label{sec:reg-var-zero}

In many places in the paper we use tools from the theory of regular variation 
(see, e.g., Theorem~\ref{thm:bias-variance-decomp} and the end of Appendix~\ref{sec:proof-metric-entropy-integral-form}).
The classical theory is formulated for functions whose argument tends to infinity \cite{binghamRegularVariation1987,korevaar2004tauberian}, 
whereas we are mostly interested in variables tending to zero.
Fortunately, statements `at infinity' translate easily to statements `at zero'
via the change of variables $s=1/\varepsilon$; 
the present appendix illustrates this claim on Karamata's theorem.

Specifically, given $b\in\R$, we say that a function $f\colon \Rp\to\R_+$ is regularly varying at zero with index $b$ if 
\begin{equation*}
  \lim_{\varepsilon\to 0}\frac{f(\lambda\varepsilon)}{f(\varepsilon)} = \lambda^{-b},
  \quad \text{for all } \lambda>0.
\end{equation*}
Equivalently, the map $s \mapsto f(s^{-1})$ is regularly varying (at infinity) with index $b$ in the sense of \cite[Definition~1.4.2]{binghamRegularVariation1987}.
Additionally, $f$ is slowly varying at zero if it is regularly varying at zero with index $b=0$.
We can then reformulate Karamata's theorem at zero as follows (see \cite[Theorems~1.5.11 and 1.6.1]{binghamRegularVariation1987} for the statement at infinity).

\begin{lemma}\label{lem:karamata-around-zero}
  Let $b\in\R$, let $\typevar > 1-b$, and let $f\colon \Rp \to \R_+$ be  bounded on $(\varepsilon, \infty)$, for all $\varepsilon>0$.
  Then, $f$ is regularly varying at zero with index $b$ if and only if
  \begin{equation}\label{eq:karamata-around-zero}
    \int_{\varepsilon}^{\infty} \frac{f(\dumvar)}{u^{\typevar}} \diff \dumvar
    \sim \frac{\varepsilon^{1-\typevar}}{b+\typevar-1}f(\varepsilon),
    \quad\text{as } \varepsilon \to 0.
  \end{equation}
\end{lemma}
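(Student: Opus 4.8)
The plan is to reduce the ``at zero'' statement to the classical Karamata theorem ``at infinity'' via the change of variables $s = 1/\varepsilon$, exactly as advertised in the paragraph preceding the lemma. First I would set $g(s) \coloneqq f(1/s)$ for all $s > 0$; by the definition recalled just above the lemma, $f$ is regularly varying at zero with index $b$ if and only if $g$ is regularly varying at infinity with index $b$ in the sense of \cite[Definition~1.4.2]{binghamRegularVariation1987}. The boundedness hypothesis on $f$ translates to $g$ being bounded on $(0, s_0)$ for every $s_0 > 0$, i.e.\ $g$ is locally bounded on $[0,\infty)$ up to the usual conventions, which is precisely the integrability condition needed to invoke the classical statement.

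Next I would rewrite the integral $\int_\varepsilon^\infty f(\dumvar)\, \dumvar^{-\typevar}\, \diff\dumvar$ in terms of $g$. Substituting $\dumvar = 1/s$, so that $\diff\dumvar = -s^{-2}\,\diff s$ and the limits $\dumvar = \varepsilon, \infty$ become $s = 1/\varepsilon, 0$, gives
\begin{equation*}
  \int_{\varepsilon}^{\infty} \frac{f(\dumvar)}{\dumvar^{\typevar}}\, \diff \dumvar
  = \int_{0}^{1/\varepsilon} g(s)\, s^{\typevar}\, s^{-2}\, \diff s
  = \int_{0}^{1/\varepsilon} g(s)\, s^{\typevar - 2}\, \diff s.
\end{equation*}
Writing $\typevar - 2 = -\alpha$ with $\alpha \coloneqq 2 - \typevar$, the condition $\typevar > 1 - b$ becomes $\alpha < 1 + b$, i.e.\ $b - \alpha > -1$, which is exactly the hypothesis under which \cite[Theorem~1.5.11]{binghamRegularVariation1987} (the ``second Karamata theorem'', for integrals $\int_0^x$) applies: for $g$ regularly varying at infinity with index $b$ and $b - \alpha > -1$, one has $\int_0^x g(s)\, s^{-\alpha}\, \diff s \sim x^{1-\alpha} g(x) / (b - \alpha + 1)$ as $x \to \infty$, and conversely this asymptotic forces regular variation of $g$ with index $b$ (this is the ``if and only if'' content, see also \cite[Theorem~1.6.1]{binghamRegularVariation1987}). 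Substituting back $x = 1/\varepsilon$, $\alpha = 2 - \typevar$, and $g(1/\varepsilon) = f(\varepsilon)$, the right-hand side becomes $\varepsilon^{\typevar - 1} f(\varepsilon) / (b + \typevar - 1)$, which is the asymptotic in \eqref{eq:karamata-around-zero}. The equivalence of the two conditions transfers along the equivalence $f$ reg.\ var.\ at $0 \iff g$ reg.\ var.\ at $\infty$ noted in the first step.

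The only genuine care needed is bookkeeping: matching the index conventions (the paper uses index $b$ for $\lim f(\lambda\varepsilon)/f(\varepsilon) = \lambda^{-b}$, whereas \cite{binghamRegularVariation1987} uses the opposite sign at infinity, so the change of variables $s = 1/\varepsilon$ flips it back and the indices line up), checking that the exponent condition $\typevar > 1 - b$ is exactly $b - \alpha + 1 > 0$ after the substitution, and confirming the integrability/boundedness hypothesis is the one required by the cited theorem. I do not anticipate a real obstacle here; the main (mild) subtlety is ensuring the classical theorem is quoted in the correct direction for both implications, and that the edge behavior of the integral near $\dumvar = \infty$ (equivalently $s = 0$) is harmless, which it is since $g$ is bounded near $0$ and $s^{\typevar - 2}$ is integrable at $0$ precisely when $\typevar > 1$—but in fact the full range $\typevar > 1 - b$ with possibly $\typevar \le 1$ is still covered because regular variation of $g$ at infinity controls the tail, and convergence of $\int_0 g(s) s^{\typevar-2}\,\diff s$ near $s=0$ is guaranteed by $g$ reg.\ var.\ with index $b$ and $b + \typevar - 1 > 0$; this last point is exactly what \cite[Theorem~1.5.11]{binghamRegularVariation1987} handles, so I would simply invoke it rather than re-derive it.
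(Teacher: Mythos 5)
Your proposal is correct and follows essentially the same route as the paper's proof: the substitution $s=1/\varepsilon$ (equivalently $\dumvar=1/s$), the observation that regular variation of $f$ at zero with index $b$ is by definition regular variation of $s\mapsto f(s^{-1})$ at infinity with index $b$, the translation of the boundedness hypothesis into the local boundedness/integrability needed for the classical statement, and the invocation of \cite[Theorems~1.5.11 and 1.6.1]{binghamRegularVariation1987} in both directions before substituting back. The index bookkeeping you flag works out exactly as in the paper, so no further comment is needed.
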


\begin{proof}
  Let us first perform the change of variables $\dumvartwo=\dumvar^{-1}$ in the left-hand side of \eqref{eq:karamata-around-zero} to get 
  \begin{equation}\label{eq:proof-karamata-around-zero-1}
    \int_{\varepsilon}^{\infty} \frac{f(\dumvar)}{\dumvar^{\typevar}} \diff \dumvar
    = \int_{0}^{s(\varepsilon)} f\left(\dumvartwo^{-1}\right)\,  \dumvartwo^{\typevar-2} \diff \dumvartwo,
    \quad \text{for all } \varepsilon >0,
  \end{equation}
  where $s(\varepsilon)\coloneqq \varepsilon^{-1}$ tends to $\infty$ as $\varepsilon\to 0$.
  The function $f$ being regularly varying at zero with index $b$ is equivalent to the map $s \mapsto f(s^{-1})$ being regularly varying at infinity, also with index $b$.
  Additionally, one gets, from the  corresponding assumption on $f$, that $s \mapsto f(s^{-1})$ is locally bounded---therefore also locally integrable---on $[0, \infty)$, so that the conditions of the classical version of Karamata's theorem \cite[Theorems~1.5.11~(i) and 1.6.1~(i)]{binghamRegularVariation1987} are satisfied.
  Upon its application, one gets that $s \mapsto f(s^{-1})$ is regularly varying at infinity with index $b$ if and only if
  \begin{equation}\label{eq:proof-karamata-around-zero-2}
    \int_{0}^{s(\varepsilon)} f\left(\dumvartwo^{-1}\right)\,  \dumvartwo^{\typevar-2} \diff \dumvartwo
    \sim \frac{s(\varepsilon)^{\typevar-1}}{b+\typevar-1}f\left(s(\varepsilon)^{-1}\right),
    \quad\text{as } s(\varepsilon) \to \infty.
  \end{equation}
  Replacing $s(\varepsilon)$ by $\varepsilon^{-1}$ in \eqref{eq:proof-karamata-around-zero-2} and combining with \eqref{eq:proof-karamata-around-zero-1} yields the desired result.
\end{proof}

\noindent
Regularly varying functions appear throughout the paper because item (i) in (RC) is equivalent to $\ecv$ being regularly varying at zero.
Indeed, if $\ecv$ is regularly varying at zero with index $b\in [0,\infty)$,
the smooth variation theorem applied to the regularly varying function $\ecv$ (see \cite[Definition~1.8 and Theorem~1.8.2]{binghamRegularVariation1987}) guarantees the existence of $f \in C^1(\Rp, \Rp)$ satisfying $ \ecv (x) \sim f(x)$ as $x\to 0$ and whose elasticity $\rho$ has the limit $\rho(t)\to b$ as $t\to\infty$. 
This is precisely $\rcb$.
Conversely, arguments as in \eqref{eq:chain-rule-elasticity}--\eqref{eq:application-lhopital-1} yield
\begin{equation*}
  \lim_{\varepsilon\to 0} \frac{f(\varepsilon)}{\int_{\varepsilon}^{\infty} f(\dumvar)\, \dumvar^{-1}\diff\dumvar} 
  = \lim_{\varepsilon\to 0} \rho\left(\ln\left(\varepsilon^{-1}\right)\right)
  \eqqcolon b,
\end{equation*}
from which we conclude---using Lemma~\ref{lem:karamata-around-zero} with $\typevar=1$---that $\ecv$ is indeed regularly varying at zero with index $b$.

\subsection{Asymptotic Equivalence and Integration}\label{sec:asymp-equiv-integration}

Throughout the paper, we have repeatedly integrated asymptotic equivalences.
This is justified by the following lemma.

\begin{lemma}\label{lem:abelian-integration}
  Let $f_1, f_2 \colon \Rp \to \R_+$ be integrable on $(\varepsilon, \infty)$,
  for all $\varepsilon>0$. 
  If $f_1(\varepsilon) \sim f_2(\varepsilon)$ and $\int_{\varepsilon}^{\infty}f_1(\dumvar)\diff\dumvar\to\infty$, as $\varepsilon\to 0$,
  then 
  \begin{equation*}
    \int_{\varepsilon}^{\infty} f_1(\dumvar) \diff\dumvar 
    \sim \int_{\varepsilon}^{\infty} f_2(\dumvar) \diff\dumvar, 
    \quad\text{as } \varepsilon\to 0. 
  \end{equation*}
\end{lemma}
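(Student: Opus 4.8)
The plan is to reduce the statement to the classical fact that asymptotic equivalence passes to the tail integral, a result that is itself most transparent in the regime where the integral diverges. First I would fix $\delta \in (0,1)$ and use the hypothesis $f_1(\varepsilon) \sim f_2(\varepsilon)$ as $\varepsilon \to 0$ to find $\varepsilon_0 > 0$ such that
\begin{equation*}
  (1-\delta)\, f_2(\dumvar) \leq f_1(\dumvar) \leq (1+\delta)\, f_2(\dumvar),
  \quad \text{for all } \dumvar \in (0, \varepsilon_0].
\end{equation*}
Integrating this pointwise inequality over $(\varepsilon, \varepsilon_0]$ for $\varepsilon < \varepsilon_0$, and then adding the common ``head'' contribution $\int_{\varepsilon_0}^{\infty} f_i(\dumvar)\diff\dumvar$ (which is a finite constant, independent of $\varepsilon$, by the integrability assumption), I get
\begin{equation*}
  \int_{\varepsilon}^{\infty} f_1(\dumvar)\diff\dumvar
  \leq (1+\delta) \int_{\varepsilon}^{\infty} f_2(\dumvar)\diff\dumvar
  + \left( \int_{\varepsilon_0}^{\infty} f_1(\dumvar)\diff\dumvar - (1+\delta)\int_{\varepsilon_0}^{\infty} f_2(\dumvar)\diff\dumvar \right),
\end{equation*}
and symmetrically with $1-\delta$ and the inequality reversed.

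Next I would divide through by $\int_{\varepsilon}^{\infty} f_2(\dumvar)\diff\dumvar$. The key point is that $\int_{\varepsilon}^{\infty} f_1(\dumvar)\diff\dumvar \to \infty$ together with the bound $f_1 \leq (1+\delta) f_2$ near zero forces $\int_{\varepsilon}^{\infty} f_2(\dumvar)\diff\dumvar \to \infty$ as well; hence the bracketed constant terms, being fixed, are $o_{\varepsilon\to 0}(1)$ relative to $\int_{\varepsilon}^{\infty} f_2(\dumvar)\diff\dumvar$. Therefore
\begin{equation*}
  \limsup_{\varepsilon \to 0} \frac{\int_{\varepsilon}^{\infty} f_1(\dumvar)\diff\dumvar}{\int_{\varepsilon}^{\infty} f_2(\dumvar)\diff\dumvar} \leq 1+\delta
  \quad \text{and} \quad
  \liminf_{\varepsilon \to 0} \frac{\int_{\varepsilon}^{\infty} f_1(\dumvar)\diff\dumvar}{\int_{\varepsilon}^{\infty} f_2(\dumvar)\diff\dumvar} \geq 1-\delta.
\end{equation*}
Since $\delta \in (0,1)$ was arbitrary, letting $\delta \to 0$ gives that the ratio converges to $1$, which is exactly the claimed asymptotic equivalence.

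I do not anticipate a genuine obstacle here; the only point requiring a little care is the bookkeeping of the ``head'' integrals $\int_{\varepsilon_0}^{\infty} f_i$, which must be shown to be finite (this is where the local-integrability hypothesis ``integrable on $(\varepsilon,\infty)$ for all $\varepsilon>0$'' is used) and then shown to be asymptotically negligible against a divergent denominator. One should also note that $\int_{\varepsilon}^{\infty} f_2(\dumvar)\diff\dumvar > 0$ for $\varepsilon$ small enough (otherwise $f_1 \leq (1+\delta)f_2$ would force $\int_{\varepsilon}^{\infty} f_1 = 0$, contradicting divergence), so the ratios above are well-defined near zero. Everything else is the standard $\varepsilon$--$\delta$ argument for upgrading a pointwise two-sided bound to an integral one.
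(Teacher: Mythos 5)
Your proposal is correct and follows essentially the same route as the paper's proof: fix $\delta$, integrate the two-sided pointwise bound over $(\varepsilon,\varepsilon_\delta)$, observe that the fixed head integrals over $(\varepsilon_\delta,\infty)$ are finite and hence negligible against the divergent tail, and sandwich the $\liminf$/$\limsup$ of the ratio between $1-\delta$ and $1+\delta$ before letting $\delta\to 0$. The only cosmetic difference is which of $f_1,f_2$ sits inside the sandwich, which is immaterial.
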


\begin{proof}
  Let us fix $\delta>0$. 
  The asymptotic equivalence $f_1(\varepsilon) \sim f_2(\varepsilon)$, as $\varepsilon\to 0$, guarantees the existence of $\varepsilon_\delta>0$ such that 
  \begin{equation}\label{eq:asymp-to-integral-proof-1}
    (1-\delta) f_1(\varepsilon)
    \leq f_2(\varepsilon) 
    \leq (1+\delta) f_1(\varepsilon),
    \quad\text{for all } \varepsilon \in (0,\varepsilon_\delta).
  \end{equation}
  Integrating \eqref{eq:asymp-to-integral-proof-1} yields
  \begin{equation}\label{eq:asymp-to-integral-proof-2}
    (1-\delta) \int_{\varepsilon}^{\varepsilon_\delta}f_1(\dumvar)\diff\dumvar
    \leq \int_{\varepsilon}^{\varepsilon_\delta}f_2(\dumvar)\diff\dumvar
    \leq (1+\delta) \int_{\varepsilon}^{\varepsilon_\delta}f_1(\dumvar)\diff\dumvar,
    \quad\text{for all } \varepsilon \in (0,\varepsilon_\delta).
  \end{equation}
  Moreover, under the assumptions 
  \begin{equation*}
      \int_{\varepsilon}^{\infty}f_1(\dumvar)\diff\dumvar\to\infty,  
      \quad\text{as } \varepsilon\to 0, 
      \quadtext{and} \int_{\varepsilon_\delta}^{\infty}f_1(\dumvar)\diff\dumvar, 
      \int_{\varepsilon_\delta}^{\infty}f_2(\dumvar)\diff\dumvar <\infty,
  \end{equation*}
  we get 
  \begin{equation}\label{eq:asymp-to-integral-proof-1-25}
    \liminf_{\varepsilon\to 0} \cfrac{\int_{\varepsilon}^{\infty}f_2(\dumvar)\diff\dumvar}{\int_{\varepsilon}^{\infty}f_1(\dumvar)\diff\dumvar}
    =\liminf_{\varepsilon\to 0} \cfrac{\int_{\varepsilon}^{\varepsilon_\delta}f_2(\dumvar)\diff\dumvar+\int_{\varepsilon_\delta}^{\infty}f_2(\dumvar)\diff\dumvar}{\int_{\varepsilon}^{\varepsilon_\delta}f_1(\dumvar)\diff\dumvar+\int_{\varepsilon_\delta}^{\infty}f_1(\dumvar)\diff\dumvar}
    = \liminf_{\varepsilon\to 0} \cfrac{\int_{\varepsilon}^{\varepsilon_\delta}f_2(\dumvar)\diff\dumvar}{\int_{\varepsilon}^{\varepsilon_\delta}f_1(\dumvar)\diff\dumvar}.
  \end{equation}
  With a similar line of arguments, we also come to the conclusion that 
  \begin{equation}\label{eq:asymp-to-integral-proof-1-75}
    \limsup_{\varepsilon\to 0} \cfrac{\int_{\varepsilon}^{\infty}f_2(\dumvar)\diff\dumvar}{\int_{\varepsilon}^{\infty}f_1(\dumvar)\diff\dumvar}
    = \limsup_{\varepsilon\to 0} \cfrac{\int_{\varepsilon}^{\varepsilon_\delta}f_2(\dumvar)\diff\dumvar}{\int_{\varepsilon}^{\varepsilon_\delta}f_1(\dumvar)\diff\dumvar}.
  \end{equation}
  Combining \eqref{eq:asymp-to-integral-proof-1-25} and \eqref{eq:asymp-to-integral-proof-1-75} with \eqref{eq:asymp-to-integral-proof-2}, we have proven that
    \begin{equation}\label{eq:asymp-to-integral-proof-3}
    1-\delta
    \leq \liminf_{\varepsilon\to 0} \cfrac{\int_{\varepsilon}^{\infty}f_2(\dumvar)\diff\dumvar}{\int_{\varepsilon}^{\infty}f_1(\dumvar)\diff\dumvar}
    \leq \limsup_{\varepsilon\to 0} \cfrac{\int_{\varepsilon}^{\infty}f_2(\dumvar)\diff\dumvar}{\int_{\varepsilon}^{\infty}f_1(\dumvar)\diff\dumvar}
    \leq 1+\delta.
  \end{equation}
  The parameter $\delta$ can be taken arbitrarily small in \eqref{eq:asymp-to-integral-proof-3}, implying
  \begin{equation*}
    \lim_{\varepsilon\to 0} \cfrac{\int_{\varepsilon}^{\infty}f_2(\dumvar)\diff\dumvar}{\int_{\varepsilon}^{\infty}f_1(\dumvar)\diff\dumvar}
    =1,
  \end{equation*}
  and thus establishing the desired result.
\end{proof}

\noindent
Lemma~\ref{lem:abelian-integration} is particularly useful for us because it transfers asymptotics of $\ecv$ to asymptotics of $\intk{\typevar}$, for every $\typevar\geq 1$.
Specifically, suppose the semi-axis-counting function satisfies $\ecv(\varepsilon) \sim \psi(\varepsilon)$ as $\varepsilon\to 0$ for a given function $\psi$. 
Applying Lemma~\ref{lem:abelian-integration} with $f_1 \colon \dumvar \mapsto \ecv(\dumvar) / \dumvar^\typevar$ and $f_2 \colon \dumvar \mapsto \psi(\dumvar) / \dumvar^\typevar $ yields $\intk{\typevar}(\varepsilon) \sim \int_{\varepsilon}^{\infty}\psi(\dumvar) / \dumvar^\typevar \diff\dumvar$.
For example, if $\ecv(\varepsilon) \sim c \,  \varepsilon^{-\alpha}$, for $c, \alpha>0$, then
\begin{equation}\label{eq:ecv-implies-Itau-asymp}
  \intk{\typevar}(\varepsilon) 
  = \int_{\varepsilon}^{\infty} \frac{\ecv(\dumvar)}{\dumvar^\typevar} \diff \dumvar
  \sim \int_{\varepsilon}^{\infty} \frac{c \,  \dumvar^{-\alpha}}{\dumvar^\typevar} \diff \dumvar
  = \frac{c \, \varepsilon^{-\alpha-\typevar +1}}{\alpha+\typevar - 1},
  \quad \text{as } \varepsilon\to 0.
\end{equation}
Note that the infinite-limit condition of Lemma~\ref{lem:abelian-integration},
namely $\int_{\varepsilon}^{\infty}\ecv(\dumvar) / \dumvar^\typevar \diff\dumvar \to \infty$ as $\varepsilon\to 0$,
always comes for free.
Moreover, we emphasize that only the local properties around zero matter.
For instance, when deriving the metric entropy of $d$-dimensional ellipsoids, 
i.e., when $\ecv(\varepsilon) \sim d$ and $\typevar=1$, 
one wishes to integrate the map $u\mapsto d/u$, which is not integrable at infinity. 
However, it may be replaced by the truncated function $u\mapsto d \mathbbm{1}_{(0,1)}(u)/u$, which is integrable at infinity and has the same asymptotics at zero.
Applying Lemma~\ref{lem:abelian-integration} then yields the standard result \eqref{eq:finite-dim-ME-formula} according to 
\begin{equation*}
  \intk{1}(\varepsilon) 
  = \int_{\varepsilon}^{\infty} \frac{\ecv(\dumvar)}{\dumvar} \diff \dumvar
  \sim \int_{\varepsilon}^{1} \frac{d}{\dumvar} \diff \dumvar
  = d\ln\left(\varepsilon^{-1}\right),
  \quad \text{as } \varepsilon\to 0.
\end{equation*}

We next ask whether one can deduce the asymptotics of $\intk{\typevar_1}$ from those of $\intk{\typevar_2}$ when $\typevar_1\neq \typevar_2$.
From Lemma~\ref{lem:integral-k-to-k+1}, this is possible provided the leading terms on the right-hand side of \eqref{eq:lemma-integral-k-to-k+1} do not cancel out, 
i.e., unless we have 
\begin{equation}\label{eq:canceling-terms-integrals}
   \int_{\varepsilon}^{\infty} \intk{\typevar_2}(\dumvar)\, \dumvar^{\typevar_2-\typevar_1-1} \diff \dumvar
   \sim \frac{\varepsilon^{\typevar_2-\typevar_1}}{\typevar_1-\typevar_2} \intk{\typevar_2}(\varepsilon) ,
  \quad \text{as } \varepsilon\to 0.
\end{equation}
Such a cancelation can only happen when both sides of \eqref{eq:canceling-terms-integrals} are non-negative, 
which forces $\typevar_1>\typevar_2$.
Moreover, applying Lemma~\ref{lem:karamata-around-zero} with $b=0$ and $\typevar= 1 + \typevar_1 - \typevar_2>1$ shows that the scenario \eqref{eq:canceling-terms-integrals} can occur only if $\intk{\typevar_2}$ is slowly varying at zero.
In this case, l'H\^{o}pital's rule (see \cite[Theorem~5.13]{rudin1953principles}) applied to the functions $\varepsilon \mapsto \intk{\typevar_2}(\lambda \varepsilon)$ and $\varepsilon \mapsto \intk{\typevar_2}( \varepsilon)$, for $\lambda>0$, yields the limits
\begin{equation}\label{eq:lhopital-Itau2-to-ecv}
  1
  = \lim_{\varepsilon\to 0} \frac{\intk{\typevar_2}(\lambda \varepsilon)}{\intk{\typevar_2}(\varepsilon)}
  = \lim_{\varepsilon\to 0} \frac{-\lambda \ecv(\lambda \varepsilon)/(\lambda \varepsilon)^{\typevar_2}}{\ecv(\varepsilon)/\varepsilon^{\typevar_2}}
  = \lambda^{1-\typevar_2} \lim_{\varepsilon\to 0} \frac{\ecv(\lambda \varepsilon)}{\ecv(\varepsilon)}.
\end{equation}
In particular, \eqref{eq:lhopital-Itau2-to-ecv} implies that 
if $\intk{\typevar_2}$ is slowly varying at zero then $\ecv$ is regularly varying at zero with index $1-\typevar_2$.
If we had $\tau_2 > 1$, this index would be negative so that $\ecv(\varepsilon)\to 0$ as $\varepsilon\to 0$---a contradiction.
Hence, we must have $\tau_2 = 1$;
then, by \eqref{eq:lhopital-Itau2-to-ecv}, $\ecv$ is slowly varying.
In summary, the asymptotics of $\intk{\typevar_2}$ determine those of $\intk{\typevar_1}$ whenever either $\tau_2 > 1$ or $\ecv$ is not slowly varying.
Returning to example \eqref{eq:ecv-implies-Itau-asymp}, for all $c, \alpha>0$ and $\typevar_1\neq \typevar_2$, $\ecv$ is not slowly varying, and we obtain
\begin{equation*}
  \intk{\typevar_1}(\varepsilon) 
  \sim  \frac{c \, \varepsilon^{-\alpha-\typevar_1 +1}}{\alpha+\typevar_1 - 1},
  \quadtext{if and only if}
  \intk{\typevar_2}(\varepsilon) 
  \sim  \frac{c \, \varepsilon^{-\alpha-\typevar_2 +1}}{\alpha+\typevar_2 - 1},
  \quad \text{as } \varepsilon\to 0.
\end{equation*}

In general, the converse to Lemma~\ref{lem:abelian-integration} fails: 
we cannot recover the asymptotics of a function given that of its integral.
In particular, the asymptotic behavior of $\intk{\typevar}$ need not determine that of $\ecv$.
However, the converse does hold when the integral is regularly varying at zero and the integrand is eventually monotone.
This is the content of the next lemma.

\begin{lemma}\label{lem:tauberian-integration}
  Let $b>0$, let $f\colon \Rp\to\R$  be integrable and eventually monotone near zero, and let  $g\colon \Rp\to\R^*$ be regularly varying at zero with index $b$.
  If 
  \begin{equation}\label{eq:tauberian-karamata-integral-0}
    \int_{\varepsilon}^{\infty} f(\dumvar) \diff\dumvar 
    \sim g(\varepsilon) , 
    \quadtext{then} f(\varepsilon) \sim b\, \varepsilon^{-1} \, g(\varepsilon)
    \quad\text{as } \varepsilon \to 0. 
  \end{equation}
\end{lemma}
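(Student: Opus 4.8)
The plan is to establish a "monotone density" counterpart to Lemma~\ref{lem:abelian-integration}, in the spirit of the classical monotone density theorem \cite[Theorem~1.7.2]{binghamRegularVariation1987}, but formulated directly at zero and proved by an elementary sandwich argument. I deliberately avoid the substitution $s=1/\varepsilon$: it would turn $\int_\varepsilon^\infty f$ into a $\int_0^s$-type integral, but the resulting integrand $t\mapsto f(1/t)/t^2$ need not be eventually monotone, so the hypothesis would be lost.

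Set $F(\varepsilon)\coloneqq\int_\varepsilon^\infty f(\dumvar)\diff\dumvar$, so that $F(\varepsilon)\sim g(\varepsilon)$ as $\varepsilon\to 0$ by hypothesis. First I would record two preliminary facts. Since $g$ is regularly varying at zero with index $b>0$, Potter-type bounds (see Appendix~\ref{sec:reg-var-zero} and \cite[Section~1.5]{binghamRegularVariation1987}) give $g(\varepsilon)\to\infty$, hence $F(\varepsilon)\to\infty$ as $\varepsilon\to 0$; in particular $g$ is eventually positive, which legitimizes the ratios below. Second, the eventual monotonicity of $f$ near zero must be \emph{non-increasing}: if $f$ were non-decreasing on some interval $(0,a)$ it would be bounded above there, forcing $F$ to be bounded near zero and contradicting $F(\varepsilon)\to\infty$. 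The same reasoning shows $f$ is positive on a neighbourhood of zero, so I fix $\varepsilon_1>0$ with $f$ positive and non-increasing on $(0,\varepsilon_1)$.

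The heart of the argument is the monotonicity sandwich. Fix $\lambda>1$. For $\varepsilon$ small enough that $\lambda\varepsilon<\varepsilon_1$, monotonicity of $f$ on $[\varepsilon,\lambda\varepsilon]$ gives $f(\lambda\varepsilon)(\lambda-1)\varepsilon\le F(\varepsilon)-F(\lambda\varepsilon)\le f(\varepsilon)(\lambda-1)\varepsilon$; applying the lower bound with $\varepsilon$ replaced by $\varepsilon/\lambda$ yields $f(\varepsilon)(\lambda-1)(\varepsilon/\lambda)\le F(\varepsilon/\lambda)-F(\varepsilon)$. Combining,
\[
  \frac{F(\varepsilon)-F(\lambda\varepsilon)}{(\lambda-1)\varepsilon}
  \ \le\ f(\varepsilon)\ \le\
  \frac{\lambda\bigl(F(\varepsilon/\lambda)-F(\varepsilon)\bigr)}{(\lambda-1)\varepsilon}.
\]
Dividing by $\varepsilon^{-1}g(\varepsilon)$ and using $F\sim g$ together with the regular variation of $g$ (so $F(\lambda\varepsilon)/g(\varepsilon)\to\lambda^{-b}$ and $F(\varepsilon/\lambda)/g(\varepsilon)\to\lambda^{b}$), I obtain
\[
  \frac{1-\lambda^{-b}}{\lambda-1}
  \ \le\ \liminf_{\varepsilon\to 0}\frac{f(\varepsilon)}{\varepsilon^{-1}g(\varepsilon)}
  \ \le\ \limsup_{\varepsilon\to 0}\frac{f(\varepsilon)}{\varepsilon^{-1}g(\varepsilon)}
  \ \le\ \frac{\lambda^{b+1}-\lambda}{\lambda-1}.
\]
Letting $\lambda\to 1^{+}$, both outer quantities converge to $b$ (they are the difference quotients at $1$ of $x\mapsto 1-x^{-b}$ and $x\mapsto x^{b+1}-x$), which gives $f(\varepsilon)\sim b\,\varepsilon^{-1}g(\varepsilon)$ as $\varepsilon\to 0$, as claimed.

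I expect the only genuine subtlety to be the bookkeeping around the monotonicity hypothesis—ruling out the non-decreasing alternative and isolating a one-sided neighbourhood of zero on which $f$ is positive and non-increasing—plus the routine verification that $F$ inherits the regular-variation ratios of $g$ from the equivalence $F\sim g$. Once the displayed sandwich is in place, the remainder is elementary calculus.
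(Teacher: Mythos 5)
Your proof is correct and follows essentially the same route as the paper: both adapt the classical monotone density theorem to regular variation at zero via a monotonicity sandwich on $F(\varepsilon)-F(\lambda\varepsilon)$, pass to the limit using $F\sim g$ and the regular-variation ratios of $g$, and then let the scaling parameter tend to $1$. The only differences are cosmetic (a single parameter $\lambda$ with three points versus the paper's two constants $c_1<c_2$), plus your observation that the eventually non-decreasing case is vacuous, which the paper instead dismisses as "analogous."
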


\begin{proof}
  We adapt the proof of the classical monotone density theorem to regular variations at zero; see \cite[Theorem~1.7.2]{binghamRegularVariation1987}.
  Let us first introduce the integral
  \begin{equation*}
    J(\varepsilon) 
    \coloneqq \int_{\varepsilon}^{\infty} f(\dumvar) \diff\dumvar,
    \quad \text{for all } \varepsilon>0.
  \end{equation*}
  For $c_2>c_1>0$, one can express the difference  between two evaluations of these integrals according to
  \begin{equation*}
    J(c_1\, \varepsilon) - J(c_2\, \varepsilon)
    = \int_{c_1\, \varepsilon}^{c_2\, \varepsilon} f(\dumvar) \diff\dumvar,
    \quad \text{for all } \varepsilon>0.
  \end{equation*}
  This difference can be upper- and lower-bounded for $\varepsilon$ small enough using the assumption that $f$ is eventually monotone near zero.
  (We assume that $f$ is eventually non-increasing, the non-decreasing case follows analogously.)
  Specifically, there exists $\varepsilon_*>0$  such that 
  \begin{equation}\label{eq:tauberian-karamata-integral-1}
    (c_2-c_1) \,\varepsilon \,f(c_2\, \varepsilon)
    \leq J(c_1\,\varepsilon) - J(c_2\,\varepsilon)
    \leq (c_2-c_1)\, \varepsilon\, f(c_1\varepsilon),
    \quad \text{for all } \varepsilon\in(0,\varepsilon_*).
  \end{equation}
  Moreover, we have  
  \begin{equation}\label{eq:tauberian-karamata-integral-2}
    \lim_{\varepsilon\to 0}\frac{J(c_1\,\varepsilon)}{g(\varepsilon)}
    =\lim_{\varepsilon\to 0}\frac{J(c_1\,\varepsilon)}{g(c_1\,\varepsilon)} \cdot \frac{g(c_1\,\varepsilon)}{g(\varepsilon)}
    = c_1^{-b},
  \end{equation}
  where we used the assumption $J(\varepsilon) \sim g(\varepsilon)$ from \eqref{eq:tauberian-karamata-integral-0} and the fact that $g$ is regularly varying at zero with index $b$.
  Similarly, we have 
  \begin{equation}\label{eq:tauberian-karamata-integral-3}
    \lim_{\varepsilon\to 0}\frac{J(c_2 \, \varepsilon)}{g(\varepsilon)}
    = c_2^{-b}.
  \end{equation}
  Combining  \eqref{eq:tauberian-karamata-integral-1} with the limits  \eqref{eq:tauberian-karamata-integral-2} and \eqref{eq:tauberian-karamata-integral-3} yields 
  \begin{equation}\label{eq:tauberian-karamata-integral-4}
    \limsup_{\varepsilon \to 0} \frac{\varepsilon f(c_2\, \varepsilon)}{g(\varepsilon)} 
    \leq \frac{ c_1^{-b}- c_2^{-b}}{c_2-c_1}
    \leq \liminf_{\varepsilon \to 0} \frac{\varepsilon f(c_1\, \varepsilon)}{g(\varepsilon)}.
  \end{equation}
  Choosing $c_2 = 1$ and taking the limit $c_1\to 1$, the lower bound in \eqref{eq:tauberian-karamata-integral-4} yields 
  \begin{equation}\label{eq:tauberian-karamata-integral-5}
    \limsup_{\varepsilon \to 0} \frac{\varepsilon f(\varepsilon)}{g(\varepsilon)} 
    \leq - \lim_{c_1\to 1}  \frac{ c_1^{-b}- 1}{c_1-1}
    = b.
  \end{equation}
  Likewise, choosing $c_1 = 1$ and taking the limit $c_2\to 1$, the upper bound in \eqref{eq:tauberian-karamata-integral-4} yields
  \begin{equation}\label{eq:tauberian-karamata-integral-6}
    \liminf_{\varepsilon \to 0} \frac{\varepsilon f(\varepsilon)}{g(\varepsilon)} \geq b.
  \end{equation}
  Combining \eqref{eq:tauberian-karamata-integral-5} and \eqref{eq:tauberian-karamata-integral-6} gives 
  \begin{equation*}
    \lim_{\varepsilon \to 0} \frac{\varepsilon f(\varepsilon)}{g(\varepsilon)} = b,
  \end{equation*}
  which is the desired result.
\end{proof}

\noindent
The hypothesis in Lemma~\ref{lem:tauberian-integration} that $f$ is eventually monotone near zero is not restrictive in our setting since we are mostly interested in the case $f=\ecv$.
The important assumption is regular variation of the integral.
It is for instance satisfied in the example \eqref{eq:ecv-implies-Itau-asymp}, so that
\begin{equation*}
    \intk{\typevar}(\varepsilon) 
  \sim  \frac{c \, \varepsilon^{-\alpha-\typevar +1}}{\alpha+\typevar - 1},
  \quad \text{as } \varepsilon\to 0,
  \quadtext{implies} \ecv(\varepsilon) \sim c \,  \varepsilon^{-\alpha},
\end{equation*}
for $c, \alpha>0$ and $\typevar\geq 1$.
In principle, the result can be extended to higher orders.
For instance, suppose 
\begin{equation*}
    \intk{\typevar}(\varepsilon) 
  = \frac{c_1 \, \varepsilon^{-\alpha_1-\typevar +1}}{\alpha_1+\typevar - 1}
  + \frac{c_2 \, \varepsilon^{-\alpha_2-\typevar +1}}{\alpha_2+\typevar - 1}
  + o_{\varepsilon\to 0}\left(\varepsilon^{-\alpha_2-\typevar +1}\right),
\end{equation*}
for $c_1, c_2>0$, $\alpha_1>\alpha_2>0$, and $\typevar\geq 1$.
Equivalently,
\begin{equation*}
    \int_{\varepsilon}^{\infty} \frac{\ecv(\dumvar)-c_1 \, \dumvar^{-\alpha_1}}{\dumvar^\typevar} \diff\dumvar
  =  \frac{c_2 \, \varepsilon^{-\alpha_2-\typevar +1}}{\alpha_2+\typevar - 1} + o_{\varepsilon\to 0}\left(\varepsilon^{-\alpha_2-\typevar +1}\right).
\end{equation*}
Therefore, assuming the second-order term of $\ecv$ is eventually monotone near zero, Lemma~\ref{lem:tauberian-integration} yields
\begin{equation*}
  \ecv(\varepsilon) 
  = c_1 \, \varepsilon^{-\alpha_1}
  + c_2 \, \varepsilon^{-\alpha_2}
  + o_{\varepsilon\to 0}\left(\varepsilon^{-\alpha_2}\right).
\end{equation*}
In practice, however, verifying the eventual monotonicity of the second-order term is rarely feasible.

In view of Theorem~\ref{thm:bias-variance-decomp}, 
we also need to understand how asymptotics behave under an infimum.
The next lemma shows that one can take the infimum without losing asymptotic equivalence.

\begin{lemma}\label{lem:abelian-infimum}
  Let $f_1, f_2 \colon \Rp \to \R_+$. 
  If $f_1(\varepsilon) \sim f_2(\varepsilon)$, as $\varepsilon\to 0$,
  then 
  \begin{equation*}
    \newinf_{\varepsilon > 0} \left\{  \sigma^2 f_1(\varepsilon) + \varepsilon^2 \right\}
    \sim \newinf_{\varepsilon > 0} \left\{  \sigma^2 f_2(\varepsilon) + \varepsilon^2 \right\}, 
    \quad \text{as } \sigma\to 0.
  \end{equation*}
\end{lemma}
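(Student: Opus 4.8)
The plan is to exploit the two-sided bound coming directly from the asymptotic equivalence $f_1(\varepsilon)\sim f_2(\varepsilon)$ and to show that it passes through the infimum. First I would fix $\delta\in(0,1)$ and use the definition of $\sim$ to obtain a threshold $\varepsilon_\delta>0$ such that
\begin{equation*}
  (1-\delta)\, f_1(\varepsilon) \le f_2(\varepsilon) \le (1+\delta)\, f_1(\varepsilon),
  \quad\text{for all } \varepsilon\in(0,\varepsilon_\delta).
\end{equation*}
For $\varepsilon$ restricted to $(0,\varepsilon_\delta)$ this immediately gives, after adding $\varepsilon^2$ and taking the infimum over that sub-range,
\begin{equation*}
  \newinf_{0<\varepsilon<\varepsilon_\delta}\left\{\sigma^2 f_2(\varepsilon)+\varepsilon^2\right\}
  \le (1+\delta)\newinf_{0<\varepsilon<\varepsilon_\delta}\left\{\sigma^2 f_1(\varepsilon)+\tfrac{1}{1+\delta}\varepsilon^2\right\}
  \le (1+\delta)\newinf_{0<\varepsilon<\varepsilon_\delta}\left\{\sigma^2 f_1(\varepsilon)+\varepsilon^2\right\},
\end{equation*}
and symmetrically with $f_1,f_2$ interchanged and $(1+\delta)$ replaced by $(1-\delta)^{-1}$ (or, more simply, one keeps the $\varepsilon^2$ term untouched and only scales the $\sigma^2 f_i$ term, which suffices).

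The main obstacle — and the only real content — is to argue that, for $\sigma$ small enough, the infimum over all of $\Rp$ coincides with the infimum over the sub-range $(0,\varepsilon_\delta)$, so that the restriction above is harmless. For this I would note that the quantity $J_i(\sigma)\coloneqq\newinf_{\varepsilon>0}\{\sigma^2 f_i(\varepsilon)+\varepsilon^2\}$ satisfies $J_i(\sigma)\to 0$ as $\sigma\to 0$: indeed, for any fixed $\varepsilon_0>0$ one has $J_i(\sigma)\le \sigma^2 f_i(\varepsilon_0)+\varepsilon_0^2$, so $\limsup_{\sigma\to0}J_i(\sigma)\le\varepsilon_0^2$, and $\varepsilon_0$ is arbitrary. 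In particular $J_i(\sigma)<\varepsilon_\delta^2$ for $\sigma$ small, so any $\varepsilon\ge\varepsilon_\delta$ satisfies $\sigma^2 f_i(\varepsilon)+\varepsilon^2\ge\varepsilon_\delta^2>J_i(\sigma)$ and hence cannot be (close to) a minimizer; the infimum is therefore attained, up to an arbitrarily small error, inside $(0,\varepsilon_\delta)$. Here I am using only $f_i\ge 0$, so the bound $\sigma^2 f_i(\varepsilon)+\varepsilon^2\ge\varepsilon^2$ holds without any regularity hypothesis on $f_i$.

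Putting the two pieces together: for $\sigma$ small enough, $\newinf_{\varepsilon>0}\{\sigma^2 f_2(\varepsilon)+\varepsilon^2\}=\newinf_{0<\varepsilon<\varepsilon_\delta}\{\sigma^2 f_2(\varepsilon)+\varepsilon^2\}\le(1+\delta)\newinf_{0<\varepsilon<\varepsilon_\delta}\{\sigma^2 f_1(\varepsilon)+\varepsilon^2\}=(1+\delta)\newinf_{\varepsilon>0}\{\sigma^2 f_1(\varepsilon)+\varepsilon^2\}$, and the reverse inequality with $1-\delta$ (or $(1+\delta)^{-1}$) by symmetry. Hence
\begin{equation*}
  1-\delta \le \liminf_{\sigma\to 0}\frac{\newinf_{\varepsilon>0}\{\sigma^2 f_2(\varepsilon)+\varepsilon^2\}}{\newinf_{\varepsilon>0}\{\sigma^2 f_1(\varepsilon)+\varepsilon^2\}}
  \le \limsup_{\sigma\to 0}\frac{\newinf_{\varepsilon>0}\{\sigma^2 f_2(\varepsilon)+\varepsilon^2\}}{\newinf_{\varepsilon>0}\{\sigma^2 f_1(\varepsilon)+\varepsilon^2\}}\le 1+\delta,
\end{equation*}
and letting $\delta\to 0$ yields the claimed asymptotic equivalence. (One should also dispose of the degenerate case where the denominator could vanish; but $J_1(\sigma)\ge 0$, and if $J_1(\sigma)=0$ for a sequence $\sigma\to 0$ then the same bound $\sigma^2 f_2(\varepsilon)+\varepsilon^2\le(1+\delta)(\sigma^2 f_1(\varepsilon)+\varepsilon^2)$ forces $J_2(\sigma)=0$ too, so the ratio is interpreted as $1$.) I expect the mild bookkeeping around the infinite-range-versus-sub-range comparison to be the step needing the most care; everything else is a direct consequence of $f_i\ge 0$ and the definition of asymptotic equivalence.
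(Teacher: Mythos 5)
Your proposal is correct and takes essentially the same route as the paper: the two-sided pointwise bound $(1-\delta)f_1(\varepsilon)\le f_2(\varepsilon)\le(1+\delta)f_1(\varepsilon)$ on $(0,\varepsilon_\delta)$, followed by the observation that for small $\sigma$ the infimum is effectively taken over $\varepsilon<\varepsilon_\delta$, and then $\delta\to 0$. The only difference is bookkeeping in that localization step: you show $\newinf_{\varepsilon>0}\{\sigma^2 f_i(\varepsilon)+\varepsilon^2\}\to 0$ directly and conclude the full infimum equals the infimum over $(0,\varepsilon_\delta)$, whereas the paper reaches the same conclusion via the monotonicity of the infimum in $\sigma$ together with a minimizing sequence confined to $(0,\varepsilon_\delta)$; your variant is, if anything, slightly leaner.
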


\begin{proof}
  Let us fix $\delta>0$. 
  The asymptotic equivalence $f_1(\varepsilon) \sim f_2(\varepsilon)$, as $\varepsilon\to 0$, guarantees the existence of $\varepsilon_\delta>0$ such that 
  \begin{equation*}
    (1-\delta) f_1(\varepsilon)
    \leq f_2(\varepsilon) 
    \leq (1+\delta) f_1(\varepsilon),
    \quad\text{for all } \varepsilon \in (0,\varepsilon_\delta).
  \end{equation*}
  Consequently, we get 
  \begin{equation}\label{eq:proof-lemma-inf-7}
    (1-\delta)  \left(\sigma^2 f_1(\varepsilon) + \varepsilon^2\right)
    \leq  \sigma^2 f_2(\varepsilon) + \varepsilon^2 
    \leq (1+\delta)  \left(\sigma^2 f_1(\varepsilon) + \varepsilon^2\right),
  \end{equation}
  for all $ \varepsilon \in (0,\varepsilon_\delta)$  and  $\sigma>0$.
  We next observe that the map $\sigma\mapsto \inf_{\varepsilon > 0} \{  \sigma^2 f_2(\varepsilon) + \varepsilon^2 \}$ is non-decreasing.
  (Indeed, let us choose $\sigma_+> \sigma_->0$ arbitrarily, and let $\{\varepsilon_{n, +}\}_{n\in\Ns}$ be such that $  \sigma_+^2 f_2(\varepsilon_{n, +}) + \varepsilon_{n, +}^2 \to \inf_{\varepsilon > 0} \{  \sigma_+^2 f_2(\varepsilon) + \varepsilon^2 \}$, as $n\to\infty$.
  Then, 
  \begin{align*}
    \newinf_{\varepsilon > 0} \left\{  \sigma_+^2 f_2(\varepsilon) + \varepsilon^2 \right\} 
    &= \lim_{n\to \infty }  \sigma_+^2 f_2(\varepsilon_{n, +}) + \varepsilon_{n, +}^2 \\
    &\geq \lim_{n\to \infty }  \sigma_-^2 f_2(\varepsilon_{n, +}) + \varepsilon_{n, +}^2 
    \geq \newinf_{\varepsilon > 0} \left\{  \sigma_-^2 f_2(\varepsilon) + \varepsilon^2 \right\}, 
  \end{align*}
  from which the desired non-decreasing property follows.)
  Consequently, we can find $\sigma_1>0$ small enough to have 
  \begin{equation}\label{eq:proof-lemma-inf-8}
    \newinf_{\varepsilon > 0} \left\{  \sigma^2 f_2(\varepsilon) + \varepsilon^2 \right\} 
    \leq \frac{\varepsilon_\delta^2}{2},
    \quad \text{for all } \sigma\in(0,\sigma_1);
  \end{equation}
  take, for instance, $\sigma_1 = \varepsilon_\delta/(2\sqrt{f_2(\varepsilon_\delta/2)})$.
  This implies the existence of a sequence $\{\varepsilon_n\}_{n\in\Ns}$ attaining the infimum in the left-hand side of \eqref{eq:proof-lemma-inf-8} such that---using the assumption that $f_2$ is non-negative---we have $\varepsilon_n < \varepsilon_\delta$, for all $n\in\Ns$.
  Combining this observation with \eqref{eq:proof-lemma-inf-7} yields 
  \begin{align}
    (1-\delta)  \newinf_{\varepsilon > 0} \left\{  \sigma^2 f_1(\varepsilon) + \varepsilon^2 \right\} 
    &\leq  \lim_{n\to\infty} (1-\delta)  \left(\sigma^2 f_1(\varepsilon_n) + \varepsilon_n^2\right) \label{eq:proof-lemma-inf-1} \\
    &\leq \lim_{n\to\infty}  \sigma^2 f_2(\varepsilon_n) + \varepsilon_n^2
    = \newinf_{\varepsilon > 0} \left\{ \sigma^2 f_2(\varepsilon) + \varepsilon^2\right\},\label{eq:proof-lemma-inf-1-bis}
  \end{align}
  for all $\sigma\in(0,\sigma_1)$.
  Likewise, there exists $\sigma_2>0$ such that 
  \begin{equation}\label{eq:proof-lemma-inf-2}
    \newinf_{\varepsilon > 0} \left\{  \sigma^2 f_2(\varepsilon) + \varepsilon^2 \right\}
    \leq (1+\delta) \newinf_{\varepsilon > 0} \left\{  \sigma^2 f_1(\varepsilon) + \varepsilon^2 \right\},
    \quad\text{for all }\sigma\in(0,\sigma_2).
  \end{equation}
  Putting \eqref{eq:proof-lemma-inf-1}--\eqref{eq:proof-lemma-inf-1-bis} and \eqref{eq:proof-lemma-inf-2} together, we get 
  \begin{equation}\label{eq:proof-lemma-inf-9}
    (1-\delta)  \newinf_{\varepsilon > 0} \left\{  \sigma^2 f_1(\varepsilon) + \varepsilon^2 \right\} 
    \leq \newinf_{\varepsilon > 0} \left\{  \sigma^2 f_2(\varepsilon) + \varepsilon^2 \right\}  
    \leq (1+\delta) \newinf_{\varepsilon > 0} \left\{  \sigma^2 f_1(\varepsilon) + \varepsilon^2 \right\},
  \end{equation}
  for all $\sigma \in (0, \sigma_*)$, with $\sigma_*\coloneqq \min\{\sigma_1, \sigma_2\}$.
  The parameter $\delta $ can be taken arbitrarily small in \eqref{eq:proof-lemma-inf-9}, 
  thus establishing the desired result.
\end{proof}

\noindent 
The converse of Lemma~\ref{lem:abelian-infimum} holds provided the infimum is regularly varying at zero.
We further assume that the functions $f_1$ and $f_2$ are continuous, which entails no loss of generality in our setting because, by (RC), we can replace $\ecv$ by a continuous asymptotic equivalent.

\begin{lemma}\label{lem:tauberian-infimum}
  Let $b\in(-2,0)$, 
  let $f_1,f_2\colon \Rp\to\R_+$ be continuous non-increasing,
  and let $g\colon \Rp\to\Rp$ be regularly varying at zero with index $b$.
  If
  \begin{equation}\label{eq:proof-tauberian-inf-0}
    g(\sigma)
    \sim \newinf_{\varepsilon > 0} \left\{  \sigma^2 f_1(\varepsilon) + \varepsilon^2 \right\}
    \sim \newinf_{\varepsilon > 0} \left\{  \sigma^2 f_2(\varepsilon) + \varepsilon^2 \right\}, 
    \quad \text{as } \sigma\to 0,
  \end{equation}
  then $f_1(\varepsilon) \sim f_2(\varepsilon)$, as $\varepsilon\to 0$, and $f_1, f_2$ are regularly varying at zero with index $-(2b+4)/b$.
\end{lemma}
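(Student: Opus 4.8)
The plan is to prove Lemma~\ref{lem:tauberian-infimum} by first identifying the value of the infimum explicitly, then transferring regular variation from $g$ to $f_1$ and $f_2$ via a Tauberian-style argument, and finally concluding that $f_1$ and $f_2$ share the same asymptotics. The key intermediate object is the function $G_i(\sigma) \coloneqq \inf_{\varepsilon>0}\{\sigma^2 f_i(\varepsilon)+\varepsilon^2\}$, for $i\in\{1,2\}$; by \eqref{eq:proof-tauberian-inf-0} both satisfy $G_i(\sigma)\sim g(\sigma)$ as $\sigma\to 0$, hence $G_1(\sigma)\sim G_2(\sigma)$. Since $f_i$ is continuous and non-increasing and $\varepsilon\mapsto\varepsilon^2$ is continuous increasing to $\infty$, the infimum is attained (after noting that $G_i(\sigma)\to 0$ as $\sigma\to 0$, which forces the minimizing $\varepsilon$ to go to zero); let $\varepsilon_i(\sigma)$ denote a minimizer. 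The first main step is to extract, from $G_i(\sigma)\sim g(\sigma)$ and the regular variation of $g$ with index $b\in(-2,0)$, the asymptotics of $\varepsilon_i(\sigma)$ and of $f_i(\varepsilon_i(\sigma))$. Heuristically, at the optimum the two competing terms $\sigma^2 f_i(\varepsilon)$ and $\varepsilon^2$ are of the same order (this is rigorous up to constants by the envelope/optimality inequalities $\varepsilon_i(\sigma)^2\le G_i(\sigma)$ and $\sigma^2 f_i(\varepsilon_i(\sigma))\le G_i(\sigma)$, plus a lower bound obtained by testing $\varepsilon=\varepsilon_i(\sigma)$ against nearby values). This pins down $\varepsilon_i(\sigma)\stackrel{K}{\sim} g(\sigma)^{1/2}$ and $f_i(\varepsilon_i(\sigma))\stackrel{K}{\sim} g(\sigma)/\sigma^2$.

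The second step is the inversion: expressing $f_i$ as (asymptotically) a composition of $g$ and the inverse of the minimizer map. Writing $t=\varepsilon_i(\sigma)$, one has $t\stackrel{K}{\sim} g(\sigma)^{1/2}$; since $g$ is regularly varying at zero with index $b<0$, $g^{1/2}$ is regularly varying with index $b/2<0$, hence invertible near zero with an inverse that is regularly varying with index $2/b$. Solving $t\stackrel{K}{\sim}g(\sigma)^{1/2}$ for $\sigma$ gives $\sigma\stackrel{K}{\sim} \varphi(t)$ for some function $\varphi$ regularly varying at zero with index $2/b$, and then $f_i(t)\stackrel{K}{\sim} g(\sigma)/\sigma^2 \stackrel{K}{\sim} t^2/\varphi(t)^2$. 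Now $\varphi^2$ is regularly varying with index $4/b$, so $t^2/\varphi(t)^2$ is regularly varying with index $2-4/b = -(2b+4)/b$; note that $b\in(-2,0)$ makes this exponent negative, consistent with $f_i$ blowing up at zero. This simultaneously shows $f_1$ and $f_2$ are regularly varying at zero with the stated index, and, because the constant-free relation $f_i(t)\stackrel{K}{\sim} t^2/\varphi(t)^2$ uses the \emph{same} $g$ (via \eqref{eq:proof-tauberian-inf-0}) for both $i=1,2$, a careful bookkeeping of the multiplicative constants (they must both equal the constant forced by $G_i\sim g$) yields $f_1(\varepsilon)\sim f_2(\varepsilon)$ rather than merely $f_1\stackrel{K}{\sim}f_2$.

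To make the constant-tracking rigorous and avoid the pitfalls of the $\stackrel{K}{\sim}$ shorthand, I would instead argue directly with $\delta$-sandwiches, mirroring the proof of Lemma~\ref{lem:abelian-infimum}. Fix $\delta>0$. From $f_1(\varepsilon)\sim f_2(\varepsilon)$ being the \emph{goal}, I turn it around: suppose for contradiction that $\limsup_{\varepsilon\to 0} f_1(\varepsilon)/f_2(\varepsilon) > 1+\eta$ for some $\eta>0$ along a sequence $\varepsilon_n\to 0$. Using monotonicity of $f_1,f_2$, I would plug a suitably chosen $\sigma_n$ (namely $\sigma_n$ comparable to $\varepsilon_n/\sqrt{f_2(\varepsilon_n)}$, so that $\varepsilon_n$ is near-optimal for $G_2$) into both infima and derive $G_1(\sigma_n)/G_2(\sigma_n) \ge 1+\eta'$ for some $\eta'>0$ depending on $\eta$ and on the regular-variation index $b$ (the index enters because near-optimality of $\varepsilon_n$ for $G_2$ only controls $G_2(\sigma_n)$ up to the curvature of the envelope, which is governed by $b$). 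This contradicts $G_1\sim G_2$. The symmetric argument handles the $\liminf$, giving $f_1\sim f_2$. Regular variation of $f_1$ (say) then follows from the inversion argument above, or alternatively from Lemma~\ref{lem:karamata-around-zero} after recognizing $G_1$ as essentially a Legendre-type transform of $f_1$.

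The main obstacle I anticipate is the rigorous control of the minimizer's near-optimality: the inequality $\sigma^2 f_i(\varepsilon_i(\sigma)) \asymp \varepsilon_i(\sigma)^2 \asymp G_i(\sigma)$ is intuitively clear but needs the regular variation of $g$ (equivalently of $G_i$) to rule out degenerate behavior where, e.g., $f_i$ has a long flat stretch causing the optimal $\varepsilon$ to be far from $\sqrt{G_i(\sigma)}$. Concretely, the lower bound $\sigma^2 f_i(\varepsilon_i(\sigma)) \ge c\, G_i(\sigma)$ for some $c=c(b)>0$ requires comparing $G_i(\sigma)$ with $G_i(\lambda\sigma)$ for $\lambda$ slightly larger than one and invoking $G_i(\lambda\sigma)/G_i(\sigma)\to\lambda^b$; the constraint $b>-2$ is exactly what keeps $\lambda^b$ bounded away from the trivial regime and makes this comparison bite. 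I would isolate this as a preliminary claim and prove it carefully before running the $\delta$-sandwich.
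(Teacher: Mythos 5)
Your overall strategy (analyze the minimizer $\varepsilon_i(\sigma)$, balance the two competing terms, then invert to recover $f_i$) is the same as the paper's, but there is a genuine gap exactly at the step you describe as "pins down". The envelope/optimality inequalities $\varepsilon_i(\sigma)^2\le G_i(\sigma)$ and $\sigma^2 f_i(\varepsilon_i(\sigma))\le G_i(\sigma)$, plus a lower bound obtained by comparing $G_i(\sigma)$ with $G_i(\lambda\sigma)$, only yield two-sided bounds of the form $c(b)\,G_i(\sigma)\le \sigma^2 f_i(\varepsilon_i(\sigma))\le G_i(\sigma)$; they do not show that the relevant ratios converge, so neither $\varepsilon_i(\sigma)\stackrel{K}{\sim}g(\sigma)^{1/2}$ nor $f_i(\varepsilon_i(\sigma))\stackrel{K}{\sim}g(\sigma)/\sigma^2$ follows---the ratios could oscillate inside the bracket. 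Everything downstream needs more than this: regular variation of $f_i$ cannot be deduced from two-sided power bounds, and $f_1\sim f_2$ with constant exactly $1$ requires the precise limiting constants, namely $\sigma^2 f_i(\varepsilon_i(\sigma))\sim\tfrac{-b}{2}\,g(\sigma)$ and $\varepsilon_i(\sigma)^2\sim\left(1+\tfrac{b}{2}\right)g(\sigma)$, which your sketch never produces. The paper gets them through a mechanism your proposal does not identify: the value function $s\mapsto\inf_{\varepsilon>0}\{s f_i(\varepsilon)+\varepsilon^2\}$ is concave in $s$, its one-sided derivative equals $f_i(\varepsilon_i(s))$ by Danskin's theorem, and---because that derivative is monotone---the monotone density theorem upgrades the equivalence of the value functions to an equivalence of their derivatives. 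This Tauberian step (asymptotics do not pass to derivatives without monotonicity) is what the envelope inequalities cannot replace. Even after that, one still needs monotonicity of $s\mapsto\varepsilon_i(s)$ and its generalized inverse, together with $f_i$ non-increasing, to transfer the statement from the curve $\varepsilon=\varepsilon_i(s)$ to all small $\varepsilon$; your inversion of $g^{1/2}$ tacitly assumes the unproved relation $\varepsilon_i(\sigma)\stackrel{K}{\sim}g(\sigma)^{1/2}$.

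The fallback contradiction argument in your last paragraph does not close this gap either. If $f_1(\varepsilon_n)\ge(1+\eta)f_2(\varepsilon_n)$ and you test $\sigma_n^2=\varepsilon_n^2/f_2(\varepsilon_n)$, monotonicity of $f_1$ gives only $G_1(\sigma_n)\ge\min\{(1+\eta)\varepsilon_n^2,\ \varepsilon_n^2\}=\varepsilon_n^2$, while $G_2(\sigma_n)\le 2\varepsilon_n^2$; a factor of $2$ is lost, so a discrepancy of size $1+\eta$ with small $\eta$ cannot be detected. Making this comparison "bite" would require knowing exactly how the optimum splits $G_i$ between the two terms, i.e., precisely the sharp balance constants that are missing. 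A minor additional slip: with the paper's convention for regular variation at zero, the index $-(2b+4)/b$ is positive for $b\in(-2,0)$ (consistent with $f_i$ blowing up at zero), not negative, and your intermediate exponent $2-4/b$ should read $-2-4/b$.
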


\begin{proof}
  Let us introduce, for $i\in\{1,2\}$, the function $g_i$ as the infimum appearing in \eqref{eq:proof-tauberian-inf-0}, that is, 
  \begin{equation}\label{eq:proof-tauberian-inf-1}
    g_i(s) \coloneqq \newinf_{\varepsilon > 0} \left\{  s f_i(\varepsilon) + \varepsilon^2 \right\},
    \quad\text{for all } s>0.
  \end{equation}
  By assumption, we have the asymptotic equivalence
  \begin{equation}\label{eq:proof-tauberian-inf-2}
      g_1(s) \sim g_2(s) \sim g(\sqrt{s}),
    \quad\text{as } s\to 0,
  \end{equation}
  implying that both $g_1$ and $g_2$ are regularly varying at zero with index $b/2$, and, in particular, that $g_1(s) \to 0$ and $g_2(s) \to 0$, as $s\to 0$.
  We next verify that the function $g_i$, for $i\in\{1,2\}$, is concave. 
  This is a consequence of the observation that, for all $s_1,s_2>0$ and $\lambda\in(0,1)$, we have 
  \begin{align*}
    g_i\left(\lambda\,s_1 + (1-\lambda)\, s_2\right)
    &= \newinf_{\varepsilon > 0} \left\{  \left(\lambda\,s_1 + (1-\lambda) \,s_2\right) f_i(\varepsilon) + \varepsilon^2 \right\} \\ 
    &= \newinf_{\varepsilon > 0} \left\{ \lambda \left(s_1 f_i(\varepsilon) + \varepsilon^2\right) + (1-\lambda) \left(s_2 f_i(\varepsilon) + \varepsilon^2\right) \right\}\\
    &\geq \lambda \newinf_{\varepsilon > 0} \left\{  s_1 f_i(\varepsilon) + \varepsilon^2 \right\} + (1-\lambda) \newinf_{\varepsilon > 0} \left\{  s_2 f_i(\varepsilon) + \varepsilon^2 \right\} \\
    &= \lambda \, g_i(s_1)+ (1-\lambda) \, g_i(s_2).
  \end{align*}
  The concavity of $g_i$---or, equivalently, the convexity of $-g_i$---implies the existence of its right derivative $g'_i$ \cite[Theorem~23.1]{rock70}, which, via  \cite[Corollary~24.2.1]{rock70}, satisfies
  \begin{equation*}
    g_i(s) = \int_{0}^{s} g'_i(\dumvar) \diff \dumvar,
    \quad\text{for all } s>0,
  \end{equation*}
  where we also used that $g_i(s)\to 0$, as $s \to 0$.
  Furthermore, the concavity of $g_i$ guarantees that $g'_i$ is non-increasing \cite[Theorem~24.1]{rock70}.
  Together with the regularly-varying nature of $g_i$, this allows us to apply the  monotone density theorem \cite[Theorem~1.7.2b]{binghamRegularVariation1987} to obtain
  \begin{equation}\label{eq:proof-tauberian-inf-3}
    g_i (s) 
    \sim - \frac{2s \, g'_i(s)}{b},
    \quad\text{as } s\to 0.
  \end{equation}

  Now, let us introduce the set of minimizers in the definition \eqref{eq:proof-tauberian-inf-1} of $g_i$ according to 
  \begin{equation*}
    E_i(s)
    \coloneqq \left\{ \varepsilon \mid  g_i(s) = s f_i(\varepsilon) + \varepsilon^2 \right\},
    \quad\text{for all } s>0.
  \end{equation*}
  This set is closed and non-empty by continuity of $f_i$.
  We additionally verify that $E_i(s)$ is bounded for all $s>0$---all members of this set must be less that $\sqrt{g_i(s)}$---and define $\varepsilon_i(s)$ as its maximal element.
  Then, by Danskin's theorem (see, e.g., \cite[Theorem~1]{BERNHARD19951163} and the references therein), the right derivative of $g_i$ satisfies
  \begin{equation}\label{eq:proof-tauberian-inf-4}
    g'_i(s) 
    = \min_{\varepsilon \in E_i(s)}\left\{ f_i(\varepsilon) \right\} 
    = f_i(\varepsilon_i(s)), 
    \quad\text{for all } s>0,
  \end{equation} 
 where the second step follows from $f_i$ being non-increasing.
  Therefore, combining \eqref{eq:proof-tauberian-inf-4} with  \eqref{eq:proof-tauberian-inf-2} and \eqref{eq:proof-tauberian-inf-3}, we get 
  \begin{equation}\label{eq:proof-tauberian-inf-45}
    f_i(\varepsilon_i(s))
    \sim \frac{-b}{2s} g_i(s)
    \sim  \frac{-b}{2s} g(\sqrt{s}),
    \quad\text{as } s\to 0.
  \end{equation}
  Furthermore, using that $\varepsilon_i(s) \in E_i(s)$ together with \eqref{eq:proof-tauberian-inf-2} and \eqref{eq:proof-tauberian-inf-45}, we obtain
  \begin{equation}\label{eq:proof-tauberian-inf-5}
    \varepsilon_i^2 (s)
    = g_i(s) -s f_i(\varepsilon_i(s))
    \sim \left(1+\frac{b}{2}\right)g(\sqrt{s}) ,
    \quad\text{as } s\to 0.
  \end{equation}
  In particular, the right-most term of \eqref{eq:proof-tauberian-inf-5} being independent of the index $i$, we must have $\varepsilon_1(s) \sim \varepsilon_2(s)$, as $s\to 0$.
  Moreover, $g$ being regularly varying at zero with index $b$ implies that $\varepsilon_i$ is as well regularly varying at zero, but with index $b/4$.

Let us now prove that the map $s\mapsto \varepsilon_i(s)$ is non-decreasing, for all $i\in\{1,2\}$.
To this end, let us introduce $s_2>s_1>0$, and observe that, by definition of $\varepsilon_i$, we must have 
\begin{equation}\label{eq:esp-monotone-1}
  s_1 f_i(\varepsilon_i(s_1)) + \varepsilon_i^2(s_1) 
  \leq s_1 f_i(\varepsilon_i(s_2)) + \varepsilon_i^2(s_2),
\end{equation}
and 
\begin{equation}\label{eq:esp-monotone-2}
  s_2 f_i(\varepsilon_i(s_2)) + \varepsilon_i^2(s_2) 
  \leq s_2 f_i(\varepsilon_i(s_1)) + \varepsilon_i^2(s_1).
\end{equation}
Combining \eqref{eq:esp-monotone-1}
 and \eqref{eq:esp-monotone-2} yields 
 \begin{equation*}
   s_1 f_i(\varepsilon_i(s_1)) + \varepsilon_i^2(s_1)
   + s_2 f_i(\varepsilon_i(s_2)) + \varepsilon_i^2(s_2) 
  \leq s_1 f_i(\varepsilon_i(s_2)) + \varepsilon_i^2(s_2) 
  + s_2 f_i(\varepsilon_i(s_1)) + \varepsilon_i^2(s_1) ,
 \end{equation*}
or, equivalently, 
\begin{equation}
  (s_2-s_1) \left(f_i(\varepsilon_i(s_1)) - f_i(\varepsilon_i(s_2))\right) \geq 0.
\end{equation}
Since we have taken $s_2>s_1$, we either have $f_i(\varepsilon_i(s_1)) = f_i(\varepsilon_i(s_2))$, in which case \eqref{eq:esp-monotone-1} and \eqref{eq:esp-monotone-2} imply $\varepsilon_i(s_1) = \varepsilon_i(s_2)$, or we have $f_i(\varepsilon_i(s_1)) > f_i(\varepsilon_i(s_2))$, in which case $f_i$ being non-increasing implies $\varepsilon_i(s_1) <  \varepsilon_i(s_2)$.
Both cases give $\varepsilon_i(s_1) \leq \varepsilon_i(s_2)$, which allows to conclude that $s\mapsto \varepsilon_i(s)$ is indeed non-decreasing.

  Following \cite[Chapter~1.5.7]{binghamRegularVariation1987},
  let us now introduce the generalized inverse $s_i$ of $\varepsilon_i$ according to
  \begin{equation}\label{eq:definition-si}
    s_i(\varepsilon) 
    \coloneqq \newinf_{\varepsilon>0} \left\{s>0 \mid \varepsilon_i(s) > \varepsilon \right\}, 
    \quad\text{for all } \varepsilon>0.
  \end{equation}
  Upon application of \cite[Theorem~1.5.12]{binghamRegularVariation1987},
  we are guaranteed that $s_i$ is regularly varying at zero with index $4/b$.
  Let $\delta\in(0,1)$.
  Now, using that the map $s\mapsto \varepsilon_i(s)$ is non-decreasing, we get from \eqref{eq:definition-si}
  \begin{equation}\label{eq:proof-tauberian-inf-6}
    \varepsilon_i\big(s_i(\varepsilon) (1-\delta)\big)
    \leq \varepsilon
    < \varepsilon_i\big(s_i(\varepsilon) (1+\delta)\big), 
    \quad\text{for all }\varepsilon>0.
  \end{equation}
  Applying $f_i$ to \eqref{eq:proof-tauberian-inf-6} and using that it is non-increasing, we get 
  \begin{equation}\label{eq:proof-tauberian-inf-7}
    \cfrac{f_i\Big(\varepsilon_i\big(s_i(\varepsilon) (1+\delta)\big)\Big)}{ \frac{-b}{2s_i(\varepsilon)} g\left(\sqrt{s_i(\varepsilon)}\right)}
    \leq \cfrac{f_i(\varepsilon)}{ \frac{-b}{2s_i(\varepsilon)} g\left(\sqrt{s_i(\varepsilon)}\right)}
    \leq \cfrac{f_i\Big(\varepsilon_i\big(s_i(\varepsilon) (1-\delta)\big)\Big)}{ \frac{-b}{2s_i(\varepsilon)} g\left(\sqrt{s_i(\varepsilon)}\right)}, 
    \quad\text{for all }\varepsilon>0.
  \end{equation}
  Then, using \eqref{eq:proof-tauberian-inf-45} with the observation that the map  $s\mapsto-bg(\sqrt{s})/(2s)$ is regularly varying at zero with index $1+b/2$, we get  
  \begin{align*}
    \lim_{\varepsilon \to 0} \cfrac{f_i\Big(\varepsilon_i\big(s_i(\varepsilon) (1+\delta)\big)\Big)}{ \frac{-b}{2s_i(\varepsilon)} g\left(\sqrt{s_i(\varepsilon)}\right)}
    &= \lim_{\varepsilon \to 0}  \cfrac{f_i\Big(\varepsilon_i\big(s_i(\varepsilon) (1+\delta)\big)\Big)}{\frac{-b}{2s_i(\varepsilon) (1+\delta)} g\left(\sqrt{(1+\delta) s_i(\varepsilon)}\right)} 
     \cfrac{ \frac{-b}{2s_i(\varepsilon) (1+\delta)} g\left(\sqrt{(1+\delta)s_i(\varepsilon)}\right)}{ \frac{-b}{2s_i(\varepsilon)} g\left(\sqrt{s_i(\varepsilon)}\right)} \\
    &= (1+\delta)^{-(1+b/2)}.
  \end{align*}
  Likewise, we have 
   \begin{equation*}
    \lim_{\varepsilon \to 0} \cfrac{f_i\Big(\varepsilon_i\big(s_i(\varepsilon) (1-\delta)\big)\Big)}{\frac{-b}{2s_i(\varepsilon)} g\left(\sqrt{s_i(\varepsilon)}\right)}
    = (1-\delta)^{-(1+b/2)},
  \end{equation*}
  so that taking the limit $\varepsilon\to 0$ in \eqref{eq:proof-tauberian-inf-7} yields
  \begin{align}
    (1+\delta)^{-(1+b/2)}
    &\leq \liminf_{\varepsilon\to 0} \cfrac{f_i(\varepsilon)}{\frac{-b}{2s_i(\varepsilon)} g\left(\sqrt{s_i(\varepsilon)}\right)}\label{eq:proof-tauberian-inf-8-1}\\
    &\leq \limsup_{\varepsilon\to 0} \cfrac{f_i(\varepsilon)}{\frac{-b}{2s_i(\varepsilon)} g\left(\sqrt{s_i(\varepsilon)}\right)}
    \leq (1-\delta)^{-(1+b/2)}.\label{eq:proof-tauberian-inf-8-2}
  \end{align}
  Choosing $\delta$ arbitrary small in \eqref{eq:proof-tauberian-inf-8-1}--\eqref{eq:proof-tauberian-inf-8-2} gives 
  \begin{equation}\label{eq:proof-tauberian-inf-9}
    \lim_{\varepsilon\to 0} \cfrac{f_i(\varepsilon)}{\frac{-b}{2s_i(\varepsilon)} g\left(\sqrt{s_i(\varepsilon)}\right)} = 1.
  \end{equation}
  Now, by uniqueness of the asymptotic inverse within asymptotic equivalence (see \cite[Theorem~1.5.12]{mityagin1961}), the previously established relation $\varepsilon_1(s) \sim \varepsilon_2(s)$, as $s\to 0$, implies  $s_1(\varepsilon)\sim s_2(\varepsilon)$, as $\varepsilon\to 0$.
  Additionally, asymptotic equivalence being stable under composition of regularly varying functions \cite[Theorem~1.8.7]{binghamRegularVariation1987}, we get $g(\sqrt{s_1(\varepsilon)})\sim g(\sqrt{s_2(\varepsilon)})$, as $\varepsilon\to 0$.
  Consequently, \eqref{eq:proof-tauberian-inf-9} yields 
  \begin{equation}\label{eq:proof-tauberian-inflast}
    f_1(\varepsilon)
    \sim  \frac{-b}{2s_1(\varepsilon)} g\left(\sqrt{s_1(\varepsilon)}\right) 
    \sim -\frac{-b}{2s_2(\varepsilon)} g\left(\sqrt{s_2(\varepsilon)}\right)
    \sim f_2(\varepsilon), 
    \quad\text{as }\varepsilon \to 0.
  \end{equation}
  We finish the proof by observing that $s_i$ and $g$ being regularly varying, with respective indices $4/b$ and $b$, implies, via \cite[Theorem~1.8.7]{binghamRegularVariation1987} and \eqref{eq:proof-tauberian-inflast}, that $f_1$ an $f_2$ are regularly varying at zero with index $-(2b+4)/b$.
\end{proof}

\subsection{Proof of Lemma~\ref{lem:integral-k-to-k+1}}\label{sec:proof-lemma-integral-k-to-k+1}

Fix $\typevar_1, \typevar_2 \geq 1$ and $\varepsilon >0$.
Note that the assumption $\mu_n\to 0$ as $n\to\infty$ guarantees $\ecv(\dumvar) < \infty$ for all $\dumvar>0$.
We start from the observation that
\begin{align}
  I_{\typevar_1}(\varepsilon)
  = \int_{\varepsilon}^{\infty} \frac{\ecv(\dumvar)}{\dumvar^{\typevar_1}} \diff\dumvar
  &= \int_{\varepsilon}^{\infty} \dumvar^{\typevar_2-\typevar_1} \left(-\int_{\dumvar}^\infty \frac{\ecv(\dumvartwo)}{\dumvartwo^{\typevar_2}} \diff \dumvartwo \right)' \diff\dumvar \label{eq:proof-int-by-part-1}\\
  &= - \int_{\varepsilon}^{\infty} \intk{\typevar_2}'\left(\dumvar\right)\, \dumvar^{\typevar_2-\typevar_1} \diff\dumvar, \label{eq:proof-int-by-part-2}
\end{align}
and then integrate by parts the right-hand side of \eqref{eq:proof-int-by-part-1}--\eqref{eq:proof-int-by-part-2} to get the desired result according to 
\begin{align*}
    \intk{\typevar_1}(\varepsilon) 
     & = \intk{\typevar_2}(\varepsilon)\, \varepsilon^{\typevar_2-\typevar_1} 
  + \int_{\varepsilon}^{\infty} \intk{\typevar_2}(\dumvar)\, (\dumvar^{\typevar_2-\typevar_1})' \diff \dumvar\\
   & = \intk{\typevar_2}(\varepsilon)\, \varepsilon^{\typevar_2-\typevar_1} 
  + (\typevar_2-\typevar_1) \int_{\varepsilon}^{\infty} \intk{\typevar_2}(\dumvar)\, \dumvar^{\typevar_2-\typevar_1-1} \diff \dumvar.
\end{align*}

\subsection{Statement and Proof of Lemma~\ref{lem:regularity-evf-o-term}}\label{sec:proof-lem-regularity-evf-o-term}

\begin{lemma}\label{lem:regularity-evf-o-term}
    Let $\mu=\semax$ be a sequence such that $\mu_n\to 0$ as $n\to\infty$,
  with corresponding semi-axis-counting function $\ecv$ satisfying (RC).
  Let $x^*>0$ be such that $\ecv(x^*)\geq 1$, let $f$ be as in (RC), and
  let 
  \begin{equation*}
    g(x) \coloneqq x\left(1+\frac{1}{\ecv(x)}\right),
    \quad\text{for all } x\in (0, x^*).
  \end{equation*}
  Then, we have $f(g (x)) \sim {f(x)}$, as $x\to 0$.
\end{lemma}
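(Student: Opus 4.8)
The plan is to pass to logarithmic coordinates, mirroring the $b=\infty$ part of the proof of Theorem~\ref{thm:metric-entropy-integral-form}. Write $x = e^{-t}$ and let $h(t) = \ln(f(e^{-t}))$ and $\rho = h'$ be the functions from \eqref{eq:definition-elasticity}. Setting $\delta(t) \coloneqq \ln(1 + \ecv(e^{-t})^{-1})$ one has $g(x) = e^{-(t-\delta(t))}$, so the assertion $f(g(x)) \sim f(x)$ is equivalent to $h(t - \delta(t)) - h(t) \to 0$ as $t \to \infty$. Since $\ecv$ is non-increasing, $t \mapsto \ecv(e^{-t})$ is non-decreasing and thus has a limit in $[1,\infty]$. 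If this limit is a finite integer $d$, then $f(x) \sim \ecv(x)$ forces $f(x) \to d$, while $g(x) \to 0^+$ because $x < g(x) \le 2x$ on $(0, x^*)$; hence $f(g(x)) \to d$ and the claim is immediate. I will therefore assume $\ecv(e^{-t}) \to \infty$, so that $\delta(t) \to 0$; moreover $f(e^{-t}) \sim \ecv(e^{-t})$ gives $\delta(t) \sim \ecv(e^{-t})^{-1} \sim e^{-h(t)}$ as $t \to \infty$.

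Next I would apply the mean value theorem to write $h(t - \delta(t)) - h(t) = -\delta(t)\,\rho(\zeta_t)$ for some $\zeta_t \in (t - \delta(t), t)$, reducing the goal to $\delta(t)\,\rho(\zeta_t) \to 0$. Under item (i) of (RC), $\rho$ is continuous with a finite limit at infinity, hence bounded near infinity, and the conclusion follows at once from $\delta(t) \to 0$. Under item (ii) of (RC), $\rho$ is non-decreasing on $(t^*, \infty)$ and, since $\delta(t) \to 0$, one has $\zeta_t > t^*$ for $t$ large, so $\rho(\zeta_t) \le \rho(t)$; combined with $\delta(t) \sim e^{-h(t)}$, this reduces the claim to $e^{-h(t)}\rho(t) \to 0$, i.e., to $h(t) - \ln(\rho(t)) \to \infty$.

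To establish this last limit I would argue as in \eqref{eq:LB-integral-rho}--\eqref{eq:two-individual-limits}: since $f$ is non-increasing, $h$ is non-decreasing, and since $h' = \rho$ is non-decreasing on $(t^*, \infty)$, for any fixed $t_0 > t^*$ and all $t \ge 2 t_0$,
\[
  h(t) = h(t/2) + \int_{t/2}^{t} \rho(u)\diff u \ge h(t_0) + \frac{t}{2}\,\rho(t/2);
\]
meanwhile the hypothesis $\ln(\rho(t)) = O_{t\to\infty}(\rho(t/2))$ yields $\ln(\rho(t)) \le C\,\rho(t/2)$ for some $C > 0$ and all $t$ large. Hence $h(t) - \ln(\rho(t)) \ge h(t_0) + (\tfrac{t}{2} - C)\,\rho(t/2) \to \infty$, because $\rho(t/2) \to \infty$. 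The only points needing care are the reduction to the case $\ecv(e^{-t}) \to \infty$ and the verification that $\zeta_t$ eventually lies in the interval of monotonicity of $\rho$ — both consequences of $\delta(t) \to 0$ — together with the bookkeeping $\delta(t) \sim e^{-h(t)}$; the remaining estimates are exactly the elasticity computations already carried out in the proof of Theorem~\ref{thm:metric-entropy-integral-form}.
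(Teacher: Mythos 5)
Your proposal is correct and takes essentially the same route as the paper's proof: a mean value theorem argument that reduces the claim to showing $\rho(t)\,e^{-h(t)}\to 0$, which is then established by the same lower bound $\int_{t/2}^{t}\rho(u)\diff u\geq \tfrac{t}{2}\rho(t/2)$ together with the (RC) condition $\ln(\rho(t))=O_{t\to\infty}(\rho(t/2))$, plus the same preliminary reduction to the case where $\ecv$ (equivalently $f$) diverges. The only cosmetic difference is that you apply the mean value theorem to $h=\ln f$ in logarithmic coordinates, whereas the paper applies it to $s=1/f$ in the original scale before passing to logarithmic variables.
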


\begin{proof}
  Note that we can assume---without loss of generality---that $f(x) \to \infty$ as $x\to 0$, since, otherwise, the result directly follows from $\lim_{x\to 0} f(x) = \lim_{x\to 0} f(g(x))$.

  Let us introduce the auxiliary function $s$, defined according to $s(x)\coloneqq 1/f(x)$, for all $x>0$.
  Then, we apply the mean value theorem \cite[Theorem~5.10]{rudin1953principles} to $s$ to obtain 
  \begin{equation}\label{eq:application-mvt}
    \left\lvert s( g (x) )- s(x) \right\rvert
    = \left\lvert g(x)- x\right\rvert \cdot \left\lvert s'(y_x)\right\rvert
    = \frac{x}{\ecv(x)} \left\lvert s'(y_x)\right\rvert,
    \quad\text{for all } x\in (0, x^*), 
  \end{equation}
  and for some $y_x \in (x, g(x))$.
  Dividing both sides of \eqref{eq:application-mvt} by $s(x)$ and using that $y_x > x $, we obtain 
  \begin{equation*}
    \left\lvert \frac{f(x)}{f(g (x))}-1 \right\rvert
    =\left\lvert \frac{s(g (x))}{s(x)}-1 \right\rvert
    < \frac{\left\lvert y_x \, s'(y_x) \right\rvert}{s(x)\ecv(x)},
    \quad\text{for all } x\in (0, x^*),
  \end{equation*}
  with $y_x \in (x, g(x))$.
  Since $f$ satisfies (RC), we get $s(x)\ecv(x) \to 1$, as $x\to 0$.
  It is therefore sufficient to show that $ys'(y)\to 0$, as $y\to 0$, which we proceed to do next.

  To this end, let us first introduce $t \coloneqq \ln(1/y)$, for all $y>0$.
  Then, applying consecutively twice the chain rule gives
  \begin{equation}\label{eq:ratio-rho-f-proof-lemma}
    ys'(y) 
    = - \frac{y f'(y)}{f^2(y)}
    = - \frac{e^{-t} f'(e^{-t})}{f^2(e^{-t})}
    = \frac{\rho(t)}{f(e^{-t})},
    \quad \text{for all } y>0,
  \end{equation}
  where $\rho$ is the elasticity of $f$ as in (RC).
  If $\rho (t)$ has a finite limit as $t\to\infty$, then  the right-most term in \eqref{eq:ratio-rho-f-proof-lemma} goes to zero, yielding the desired result---recall that we have assumed that $f(x)\to \infty$, as $x\to 0$.
  We thus assume in what follows that $\rho (t)\to\infty$, as $t\to\infty$.
  Setting $z(t)\coloneqq s(e^{-t})$, for all $t>0$, we get from \eqref{eq:ratio-rho-f-proof-lemma} 
  \begin{equation*}
    \rho(t) 
    = - \frac{e^{-t} f'(e^{-t})}{f(e^{-t})}
    = \frac{e^{-t} s'(e^{-t})}{s(e^{-t})} 
    = -\frac{z'(t)}{z(t)},
    \quad\text{for all } t>0,
  \end{equation*}
  which, upon integration and fixing $t_0>0$, implies 
  \begin{equation*}
    z(t) = z(t_0) \, \exp\left(-\int_{t_0}^{t} \rho(\dumvar) \diff\dumvar\right), 
    \quad\text{for all } t\geq t_0.
  \end{equation*}
  Consequently, we obtain, using \eqref{eq:ratio-rho-f-proof-lemma}, the relation 
  \begin{equation}\label{eq:to-prove-last-lemma}
    ys'(y) 
    = \rho(t) z(t)  
    = z(t_0) \,\rho(t) \, \exp\left(-\int_{t_0}^{t} \rho(\dumvar) \diff\dumvar\right), 
    \quad\text{for all } t\geq t_0.
  \end{equation}
  Together \eqref{eq:definition-elasticity} and $f$ being non-decreasing ensure that $\rho$ is non-negative.
  Moreover, by (RC), $\rho$ is non-decreasing on $(t^*,\infty)$, for some $t^*>0$. 
  We can therefore lower-bound the integral of $\rho$ according to
  \begin{equation}\label{eq:LB-integral-rho-proof-lemma}
    \int_{t_0}^{t}\rho(u)\diff u 
    \geq \int_{t/2}^{t}\rho(u)\diff u 
    \geq \frac{t\rho(t/2)}{2},
    \quad \text{for all } t\geq 2\max\{t_0, t^*\}.
  \end{equation}
  Again using (RC), we have $t\rho(t/2)/2 - \ln\left(\rho(t)\right) \to \infty$, as $t \to \infty$, which, when combined with  \eqref{eq:LB-integral-rho-proof-lemma}, yields 
  \begin{equation}\label{eq:last-equation-paper}
    \rho(t) \, \exp\left(-\int_{t_0}^{t} \rho(\dumvar) \diff\dumvar\right) \to 0,
    \quad\text{as } t\to\infty.
  \end{equation}
  Injecting the limit \eqref{eq:last-equation-paper} in \eqref{eq:to-prove-last-lemma} gives $ys'(y)\to0$, as $y\to 0$, as desired, thereby finishing the proof of Lemma~\ref{lem:regularity-evf-o-term}.
\end{proof}

\end{document}